\pgfplotsset{compat=1.11}
\newcommand\textoverset[3][]{\mathrel{\overset{\scriptsize{\eqmakebox[#1]{#2}}}{#3}}} %
\newcommand{\raisemath}[1]{\mathpalette{\raisem@th{#1}}}
\newcommand{\raisem@th}[3]{\raisebox{#1}{$#2#3$}}
\newcommand\subsmash[1]{{\smash[b]{#1}}}
\newcommand{\A}{\mathcal{A}}
\newcommand{\RR}{\mathbb{R}}
\DeclareMathOperator{\Trop}{Trop}
\DeclareMathOperator{\conv}{conv}
\DeclareMathOperator{\Supp}{Supp}
\newcommand{\suchthat}{\;\ifnum\currentgrouptype=16 \middle\fi|\;}
\title{Sharp bounds for the number of regions of maxout networks\\ and vertices of Minkowski sums} %
\author{Guido Mont\'ufar\thanks{Department of Mathematics and Department of Statistics, UCLA, Los Angeles, CA 90095, USA; Max Planck Institute for Mathematics in the Sciences, 04103 Leipzig, Germany ({montufar@math.ucla.edu}).}
\and Yue Ren\thanks{Department of Mathematics, Swansea University, Swansea SA1 8EN,
United Kingdom ({yue.ren@swansea.ac.uk}).}
\and Leon Zhang\thanks{Department of Mathematics, UC Berkeley, Berkeley, CA 94720, USA ({leonyz@berkeley.edu}).}}
\newtheorem{theorem}{Theorem}[section]
\newtheorem{corollary}[theorem]{Corollary}
\newtheorem{lemma}[theorem]{Lemma}
\newtheorem{proposition}[theorem]{Proposition}
\newtheorem{definition}[theorem]{Definition}
\newtheorem{example}[theorem]{Example}
\let\OLDthebibliography\thebibliography
\renewcommand\thebibliography[1]{
  \OLDthebibliography{#1}
  \setlength{\parskip}{0pt}
  \setlength{\itemsep}{2mm}
}
\begin{document}

\maketitle

\begin{abstract}
We present results on the number of linear regions of the functions that can be represented by artificial feedforward neural networks with maxout units.
A rank-$k$ maxout unit is a function computing the maximum of $k$ linear functions.
For networks with a single layer of maxout units, the linear regions correspond to the upper vertices of a Minkowski sum of polytopes. We obtain face counting formulas in terms of the intersection posets of tropical hypersurfaces or the number of upper faces of partial Minkowski sums, along with explicit sharp upper bounds for the number of regions for any input dimension, any number of units, and any ranks, in the cases with and without biases. Based on these results we also obtain asymptotically sharp upper bounds for networks with multiple layers.
\newline
\emph{Keywords:} linear regions of neural networks, upper bound theorem for Minkowski sums, hyperplane arrangement, tropical hypersurface arrangement, Newton polytope.
\newline
\emph{MSC2020:} 68T07, 
52B05, 
14T15, 
06A07. 
\end{abstract}

\section{Introduction}

Artificial feedforward neural networks are parametric sets of functions given as fixed compositions of units, i.e.\ elementary functions consisting of a parametrized affine map followed by a fixed %
activation function. They are extremely useful as sets of hypothesis functions in contemporary machine learning applications.
We are interested in the geometry and combinatorics of the functions represented by networks with maxout units, which have an activation function of the form $\mathbb{R}^k\to \mathbb{R};\; (z_1,\ldots,z_k)\mapsto \max\{z_1,\ldots,z_k\}$.
Maxout units were proposed in \cite{pmlr-v28-goodfellow13} generalizing the popular rectified linear units (ReLUs)~\cite{pmlr-v15-glorot11a}, which have activation function $\mathbb{R}\to \mathbb{R};\; z\mapsto \max\{0,z\}$.
The corresponding functions are piecewise (affine) linear and induce subdivisions of the input space into linear regions.
We will be concerned with estimating the maximum number of linear regions of the functions that can be represented by maxout networks with given architectures.

The analysis of neural networks with piecewise linear activation functions based on the number of linear regions of the represented functions was proposed in \cite{pascanu2013number,NIPS2014_5422}, showing that deep networks %
can represent functions which have many more linear regions than any of the functions that can be represented by shallow networks with the same number of units or the same number of parameters.  These kind of results illustrate the differences in representational power and possible benefits of different network architectures.
Works in this direction include
\cite{6704758,
telgarsky2015representation,
pmlr-v49-eldan16,
pmlr-v49-telgarsky16,
YAROTSKY2017103,
10.5555/3298483.3298577,
pmlr-v70-raghu17a} and earlier works for Boolean circuits and sum-product networks \cite{Hastad86,Hastad91,NIPS2011_8e6b42f1}.
The number of linear regions of the functions represented by networks with piecewise linear activations has sparked substantial interest in the study of neural networks, with works including
\cite{NIPS2014_5422,
pmlr-v49-telgarsky16,
montufar2017notes,
arora2018understanding,
DBLP:conf/icml/SerraTR18,
8756157}.
Recent works have %
explored approaches based on tropical geometry \cite{pmlr-v80-zhang18i,
Charisopoulos2018ATA,
alfarra2020on} and power diagram subdivisions~\cite{NIPS2019_9712}, %
while others have studied %
the expectated %
number of linear regions for typical choices of the parameters in the case of ReLU networks \cite{pmlr-v97-hanin19a,
NIPS2019_8328},  empirical enumeration~\cite{Serra_Ramalingam_2020},
and the relations between linear regions and the behavior of algorithms that are used to select the parameters of neural networks based on data, such as speed of convergence and implicit biases of gradient descent~\cite{Steinwart2019ASL,
Zhang2020Empirical,
86441}.
ReLU networks have been studied in much more detail than maxout networks.
And while ReLUs %
are currently more popular in applications, maxout networks are an interesting generalization %
that enjoy similar benefits (e.g.\ linear operation, no saturation) but without some of the possible %
drawbacks (e.g.\ dying neurons). In this work we seek to advance %
the theory for maxout networks, particularly in regards to their representational power, whereupon we develop important connections to topics in combinatorial geometry and tropical geometry.
The nonlinear locus of a ReLU $x\mapsto \max\{0,z(x)\}$ with a generic affine function $z$ is a hyperplane. Hence, linear regions of functions $x\mapsto [\max\{0,z_1(x)\},\ldots, \max\{0,z_m(x)\}]^\top$ represented by a layer of $m$ ReLUs are described by hyperplane arrangements.
Hyperplane arrangements have been investigated since the 19th century \cite{Steiner1826}.
In particular, %
Buck \cite{10.2307/2303424} showed that the number of regions and bounded regions that can be obtained by slicing $n$-dimensional Euclidean space with $m$ hyperplanes is $\sum_{j=0}^n{m\choose j}$ %
and ${m-1\choose n}$, respectively.
Moreover, a celebrated result by Zaslavsky~\cite{zaslavsky1975facing} gives a formula for the number of faces and bounded faces of hyperplane arrangements based on the poset of intersections.
For a discussion of these results and other properties of hyperplane arrangements see
 \cite{Stanley04anintroduction}.

For maxout units one obtains a more general type of arrangement.
Concretely, a rank-$k$ maxout unit
$x\mapsto \max\{z_1(x),\ldots, z_k(x)\}$ with generic affine functions $z_1,\ldots, z_k$ has a nonlinear locus of the form $\{x\in\mathbb{R}^n \colon z_i(x)=z_j(x)=\max\{z_1(x),\ldots, z_k(x)\}\allowbreak \text{ for some $i\neq j$}\}$. In tropical geometry this is known as a tropical hypersurface~\cite{MaclaganSturmfels15}.
Hence the linear regions of the functions $x\mapsto [\max\{z_{11}(x),\ldots, z_{1k_1}(x)\},\ldots,\max\{z_{m1}(x),\ldots, z_{mk_m}(x)\}]^\top$ represented by a layer of $m$ maxout units of ranks $k_1,\ldots,k_m$ are described by arrangements of tropical hypersurfaces. We will also refer to these as maxout arrangements.
The properties of such arrangements are not as well understood, except in special cases, such as tropical hyperplane arrangements \cite{Speyer08,Federico,JaggiKatzWagner}, which correspond to networks with restricted parameters, namely maxout networks whose affine maps are coordinate projections plus constants, $z_{ij}(x)=x_j + b_{ij}$.

In order to obtain counting formulas and bounds for maxout networks, we will exploit a correspondence between the regions of maxout arrangements and the upper vertices of Minkowski sums of polytopes (Proposition~\ref{prop:uppervertreg}).
In the special case of hyperplane arrangements (rank-$2$ maxout units), these reduce to Minkowski sums of line segments, called zonotopes \cite{z-lop-95}.
Minkowski sums of polytopes are of relevance in numerous topics, including computational commutative algebra, collision detection, robot motion planning, computer-aided design, and have been the subject of an intensive research program over the years.
In particular, the work of Gritzmann and Sturmfels~\cite{GritzmannSturmfels} %
showed that for sums of polytopes %
with at most $r$ total nonparallel edges, %
the maximum number of faces is attained by sums of $r$ line segments in general position. %
Tight expressions for the maximum number of faces of Minkowski sums of two and three full-dimensional polytopes were derived in \cite{10.1007/s00454-015-9726-6,10.1145/2462356.2462368}. %
Relevant to our discussion, Weibel \cite{Weibel12}
obtained an expression for the number of faces of large Minkowski sums of full-dimensional polytopes in terms of the number of faces of small subsums, and tight upper bounds for the total number of vertices.
Obtaining similar results for sums of polytopes of arbitrary dimensions requires significantly more complex arguments.
A full solution to the so-called upper bound problem for Minkowski sums (UBPM) was obtained by Adiprasito and Sanyal \cite{KarimRaman}, giving %
tight upper bounds (in non-closed form) for the number of faces of any dimension, of Minkowski sums of any polytopes with given numbers of vertices.

We shall take Zaslavsky's perspective (Theorem~\ref{thm:posetcounting}) to extend Weibel's result to the case of sums of polytopes of arbitrary dimensions (Theorem~\ref{thm:central-arbdims}), including the treatment of upper vertices (Theorem~\ref{thm:facessimple}) and bounds on the number of strict lower vertices (Theorem~\ref{thm:lowerbound_strict_lower}). Combining these with an implication of Adiprasito-Sanyal's result (Proposition~\ref{thm:Mneighborly_upper_bound}), we obtain  explicit tight upper bounds for the total number of vertices and for the number of upper vertices.
Our results for Minkowski sums of polytopes translate to tight upper bounds on the number of linear regions of the functions represented by shallow maxout networks without and with biases (Theorem~\ref{thm:main-result}). These are the first tight results for maxout networks (except for the rank-$2$ case), closing significant gaps between the upper and lower bounds from previous works \cite{NIPS2014_5422}.
Based on these results, we also derive results for deep maxout networks (Theorem~\ref{thm:deep-result}) improving previous lower and upper bounds \cite{NIPS2014_5422,DBLP:conf/icml/SerraTR18}.

\smallskip

This article is organized as follows.
In Section~\ref{sec:perspectives} we provide definitions and different perspectives on maxout networks and their linear regions. %
In Section~\ref{sec:main} we present our main results on the maximum number of linear regions of maxout networks. %
The main analysis is conducted in Sections~\ref{sec:Weibel}, \ref{sec:Zaslavsky}, and \ref{sec:Weibel-Zaslavsky}.
In Section~\ref{sec:Weibel} we present a modification of a result by Weibel to count the upper faces of Minkowski sums.
In Section~\ref{sec:Zaslavsky} we present a modification of a result by Zaslavski to count the faces of maxout arrangements.
In Section~\ref{sec:Weibel-Zaslavsky} we combine these two approaches to obtain a generalization of Weibel's result to sums of polytopes of arbitrary dimensions.
In Section~\ref{sec:discussion} we offer a discussion and outlook.

\section{Definitions and different perspectives}
\label{sec:perspectives}

\subsection{Maxout networks}

We consider standard feedforward fully connected maxout networks with no skip connections, called maxout networks for short.
Maxout networks were introduced in \cite{pmlr-v28-goodfellow13} as a generalization of ReLU networks.
\begin{definition}[Maxout networks]\
\begin{enumerate}[leftmargin=*]
\item
Let $k,n\in\mathbb{N}$. A rank-$k$ maxout unit with $n$ inputs is a parametric function
$$
\mathbb{R}^n\to\mathbb{R}; \quad
x\mapsto \max\{ \langle A_{1}, x\rangle  + b_{1} ,\ldots, \langle A_{k}, x\rangle  + b_{k} \},
$$
parametrized by
$\theta = (A_r,b_r)_{r=1}^k$, $A_r\in \mathbb{R}^n$, $b_r\in \mathbb{R}$, $r=1,\ldots, k$.
Each affine function is called a pre-activation feature.
The parameters $A_r$ and $b_r$ are called %
weights and biases. %
\item Let $m\in\mathbb{N}$ and $k_1,\ldots, k_m\in\mathbb{N}$.
A layer with $n$ inputs and $m$ maxout output units of ranks $k_1,\ldots, k_m$ is
a parametric function
$\mathbb{R}^{n}\to\mathbb{R}^{m}$,
whose $j$th output coordinate is a maxout unit of rank $k_j$, $j=1,\ldots, m$.
A layer of maxout units is also called a shallow maxout network.
\item
Let $L\in\mathbb{N}$ and $n_0,n_1,\ldots, n_L\in\mathbb{N}$.
A maxout network with $n_0$ inputs and $L$ layers of widths $n_1,\ldots, n_L$ is %
a parametric function
$\mathbb{R}^{n_0}\to \mathbb{R}^{n_L}$ of the form
$f_L \circ \cdots \circ f_1$,
where $f_l$ is a function represented by a layer with $n_{l-1}$ inputs and $n_l$ maxout output units of given ranks, $l=1,\ldots, L$. A network with multiple layers is called a deep network.

\item
The architecture of a maxout network as described above is determined by the number of inputs $n_0$, the number of layers $L$, the number of units per layer $n_1,\ldots, n_L$, and the ranks of the maxout units in each layer $k_{l,1},\ldots, k_{l,n_l}$, $l=1,\ldots, L$.
The parameter of the network, which we will denote by $\theta$, is the collection of weights and biases of all units.
For a given architecture, we denote $\mathcal{N}$ the set of functions that can be represented by the network for all possible choices of the parameter.
\end{enumerate}

\end{definition}

We will be concerned mostly with the analysis of shallow networks, from which we will also derive results for deep networks.
We will also present results for networks without biases, in which case the affine functions $\langle A_r,x\rangle + b_r$ reduce to linear functions $\langle A_r,x\rangle$.
Notice that each function represented by a maxout network is a composition of continuous piecewise (affine) linear functions and is itself a continuous piecewise linear function.
When there is no risk of confusion we refer to affine linear functions simply as linear functions.

\begin{definition}[Linear regions]
Let $f$ %
be a continuous piecewise linear function %
with $n_0$ inputs.
The nonlinear locus of %
$f$ is the set $V(f)\subseteq\mathbb{R}^{n_0}$ of input points $x$ at which $\nabla_x f$ is discontinuous.
A linear region of $f$ is a maximal connected component of $\mathbb{R}^{n_0}\setminus V$.
The number of linear regions of $f$ is denoted $N(f)$.
The maximum of the number of linear regions among all functions $f$ that can be expressed by a network $\mathcal{N}$ is denoted  $N(\mathcal{N}) = \max_{f\in\mathcal{N}}N(f)$.
\end{definition}

\subsection{Tropical hypersurfaces}

Maxout units can be regarded as tropical polynomials.
We give a brief description of these notions. For more details, please see \cite[Chapter 1]{JoswigBook}.

\begin{definition}
\label{defn:tropLaurent}
Given two real numbers $a$ and $b$, their \emph{tropical sum} is $a\oplus b = \max(a,b)$ and their \emph{tropical product} is $a\odot b = a + b$. A \emph{tropical (exponential) polynomial} is a function %
\[
f\colon\RR^{n_0}\to\RR; \quad f(x) = c_1 \odot x^{\odot\alpha_1} \oplus \dots \oplus c_k \odot x^{\odot\alpha_k},
\]
where $c_1,\dots, c_k\in \RR$, $\alpha_1,\dots, \alpha_k \in$ $\RR^{n_0}$, and $x^{\odot\alpha_1} =\alpha_1x_1+\dots \alpha_{n_0}x_{n_0}$. We refer to the $c_i \odot x^{\odot\alpha_i}$ as \emph{tropical monomials} and call $f$ a \emph{tropical $k$-nomial} if it is the sum of $k$ distinct monomials.
\end{definition}

Classically, polynomials (tropical or non-tropical) only have non-negative integer exponents.
However, this restriction is not needed in our %
discussion.
A rank-$k$ maxout unit is equivalent to a tropical $k$-nomial.

\begin{definition}
The tropical hypersurface of a tropical polynomial $f:\RR^{n_0}\to \RR$ is
\[\Trop(f):= \{x\in \RR^{n_0}: c_i x^{\alpha_i}=c_j x^{\alpha_j} = f(x)\text{ for } i\ne j \text{ two distinct monomials}\}.\]
\end{definition}
The complement $\RR^{n_0}\backslash \Trop(f)$ is a union of convex polyhedral cells on which the function $f$ is linear.
In particular, the nonlinear locus of a maxout unit is a tropical hypersurface.

From the tropical perspective, the goal of the paper is to answer a question in tropical combinatorics, namely to bound the number of regions of an arrangement of tropical hypersurfaces. Similar questions on the combinatorics of tropical hypersurface arrangements have been studied before. However, they often focus on polynomials of bounded degree, e.g.\ degree $1$ \cite{Federico,JaggiKatzWagner} or degree $d$ \cite{Joswig_2017}, rather than a bounded number of monomials. The bounds are important for the complexity of many algorithms in tropical geometry, which often rely on a enumeration of cells in a tropical arrangement or tropical variety.

\subsection{Convex conjugates and Newton polytopes}

We relate the regions of a maxout network and the vertices of certain polytopes.
The procedure has been explained in~\cite{pmlr-v80-zhang18i} from a tropical geometry perspective. %
We give a brief description that relies only on convex duality.
For an introduction to convex analysis see \cite{10.2307/j.ctt14bs1ff}.

Any continuous piecewise linear function can be expressed as the difference of two convex piecewise linear functions; see~\cite{1333237}.
Hence any function $f\colon \mathbb{R}^n\to\mathbb{R}^o$ expressed by a maxout network can be written, for each output coordinate $i=1,\ldots, o$, as the difference $f_i=g_i-h_i$ of two piecewise linear convex functions $g_i$ and $h_i$.
Given such a decomposition, we define a surrogate function $\bar f = g+h\colon \mathbb{R}^n\to\mathbb{R}$, where $g=\sum_{i=1}^o g_i$ and $h=\sum_{i=1}^o h_i$ are convex. %
This is a piecewise linear convex function and hence it can be written as
\begin{equation}
\bar f(x) = \max_{j=1,\ldots, M}  \{\langle a_j, x\rangle + b_j\}, \quad x\in \mathbb{R}^n,
\label{eq:maxoutnetworkconvexfunction}
\end{equation}
for a finite collection of coefficients $a_j\in \mathbb{R}^n$, $b_j\in \mathbb{R}$, $j=1,\ldots, M$. We will discuss the coefficients in more detail further below.

Now, any linear region of $f$ is a union of linear regions of $\bar f$, or in other words $f$ is linear on all linear regions of $\bar f$:
Let $R$ be a linear region of $\bar f$. Assume w.l.o.g. that $\bar f$ is constant $0$ on $R$, i.e., $\bar f|_R=g|_R+h|_R=0$. This implies that $g|_R=-h|_R$, which means that $g$ and $h$ are both convex and concave on $R$. Therefore, they must be linear on $R$ and so must be $f$.

Moreover, any two distinct linear regions $R,Q\subseteq\mathbb{R}^n$ of $\bar f$ %
are also distinct linear regions of $f$ %
unless $\nabla g|_{R} -\nabla g|_Q = \nabla h|_{R}- \nabla h|_{Q}$, a tie which is broken, for instance, whenever $g$ is scaled independently of $h$.
One can show that for generic choices of the network parameters $f$ and $\bar f$ actually have the same number of linear regions.

Each linear region of $\bar f$ corresponds to a (full dimensional) neighborhood of inputs $x$ over which one of the affine functions $\langle a_j, x\rangle + b_j$ attains the maximum. %
A representation of $\bar f$ as in \eqref{eq:maxoutnetworkconvexfunction} may involve many redundant affine functions.
One way to characterize the affine functions which attain the maximum over a neighborhood of the input space is as follows.
Consider the convex conjugate of $\bar f$, which is the convex piecewise linear function
$$
{\bar f}^\ast(x^\ast) = \sup_{x\in\mathbb{R}^n} \{ \langle x,x^\ast\rangle -\bar f(x)\}, \quad x^\ast\in\mathbb{R}^n.
$$
If $\bar f(x) = \langle a, x\rangle +b$
for some $x\in\mathbb{R}^n$ and $(a,b)\in \mathbb{R}^{n+1}$, then $(x^\ast,{\bar f}^\ast(x^\ast)) = (a,-b)$.
We conclude that the graph of ${\bar f}^\ast$ is the convex envelope of the points $\{(a_j,-b_j)\}\subseteq\mathbb{R}^{n+1}$.
The vertices of this envelope correspond to the affine functions $\langle a_j,x\rangle +b_j$ which attain the maximum over a neighborhood of the input space.
An equivalent way of expressing this is as follows.

\begin{definition}[Lifted Newton polytope]
\label{def:Newton-polytope}
For a function %
$\bar f(x) = \max_{j=1,\ldots, M}  \{\langle a_j, x\rangle + b_j\}$, $x\in\mathbb{R}^n$, define its
Newton polytope as $\operatorname{conv}\{a_j\colon j=1,\ldots, M\}\subseteq\mathbb{R}^{n}$, and its
lifted Newton polytope as
$P_{\bar f} =\operatorname{conv}\{(a_j,b_j)\colon j=1,\ldots, M\}\subseteq\mathbb{R}^{n+1}$.
\end{definition}

The upper vertices of a polytope $P\subseteq\mathbb{R}^{n+1}$ are the vertices $p$ which are `visible from above', meaning that their normal cones $\{r\in\mathbb{R}^{n+1}\colon \langle r, p-q\rangle> 0 \text{ for all } q\in P \setminus p\}$ intersect the upper halfspace $\mathbb{R}^n\times \mathbb{R}_{>0}$.

\begin{proposition}[{\cite[Theorem 1.13]{JoswigBook}}]
\label{prop:ftt}
The linear regions of %
$\bar f(x) = \max_{j\in\{1,\ldots, M\}}\{\langle a_j , x\rangle + b_j\}$, $x\in \mathbb{R}^n$, correspond to the upper vertices of its lifted Newton polytope $P_{\bar f}$. %
\end{proposition}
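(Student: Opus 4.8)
The plan is to translate linear regions into the language of the normal fan of $P_{\bar f}$ by means of the pairing between inputs and linear functionals on $\RR^{n+1}$. For $x_0\in\RR^n$ set $\hat x_0:=(x_0,1)\in\RR^{n+1}$; then $\langle a_j,x_0\rangle+b_j=\langle (a_j,b_j),\hat x_0\rangle$, so $\bar f(x_0)=\max_j\langle (a_j,b_j),\hat x_0\rangle$ is the support value of $P_{\bar f}$ in the direction $\hat x_0$, and the affine pieces of $\bar f$ that are active at $x_0$ correspond precisely to the face $F(\hat x_0)$ of $P_{\bar f}$ maximizing the functional $\hat x_0$.

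The first step is to show that $x_0\notin V(\bar f)$ if and only if $F(\hat x_0)$ is a single vertex. Since there are finitely many pieces, continuity implies that in a neighborhood of $x_0$ the function $\bar f$ coincides with the maximum over only the active indices, whose subdifferential at $x_0$ is the convex hull of the active gradients. Hence $\bar f$ is affine near $x_0$ iff all active gradients coincide — equivalently, since the value of an active piece at $x_0$ is pinned down by its gradient, iff all active points $(a_j,b_j)$ coincide, i.e. iff $F(\hat x_0)$ collapses to a vertex. So $\RR^n\setminus V(\bar f)=\{x_0:\hat x_0 \text{ is maximized over } P_{\bar f} \text{ at a unique point}\}$; this set is open, and on it $x_0$ determines the maximizing vertex.

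The second step is the routine normal-fan bookkeeping. For a vertex $v$ of $P_{\bar f}$ put $U_v:=\{x_0\in\RR^n:\hat x_0 \text{ is maximized uniquely at } v\}$; a functional is maximized uniquely at $v$ exactly when it lies in the open normal cone $C_v=\{r:\langle r,v-q\rangle>0\text{ for all }q\in P_{\bar f}\setminus v\}$, so $U_v$ is the slice of $C_v$ by the hyperplane $\RR^n\times\{1\}$ — relatively open, convex, hence connected — and it is nonempty iff $C_v$ meets $\RR^n\times\RR_{>0}$ (rescale the last coordinate to $1$), i.e. iff $v$ is an upper vertex. By the first step the sets $U_v$ partition $\RR^n\setminus V(\bar f)$, and each is open and therefore also closed in $\RR^n\setminus V(\bar f)$ (its complement there is the union of the remaining $U_w$); thus each nonempty $U_v$ is a connected component, i.e. a linear region of $\bar f$. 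The assignment $v\mapsto U_v$ is then the asserted bijection between upper vertices of $P_{\bar f}$ and linear regions of $\bar f$.

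The main obstacle I anticipate is the differentiability characterization of the first step: one must invoke continuity to discard inactive pieces near $x_0$ and then deal with repeated coefficient pairs, checking that several active pieces sharing a gradient are harmless while any two active pieces with distinct gradients force non-differentiability of $\bar f$ at $x_0$ (via the subdifferential, or by exhibiting a direction in which the one-sided derivatives disagree). Everything afterwards is standard polytope duality; alternatively, this step can be shortcut using the convex-conjugate description obtained above, namely that the graph of $\bar f^\ast$ is the convex envelope of the points $(a_j,-b_j)$.
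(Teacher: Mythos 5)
Your proof is correct. The paper does not actually prove this proposition --- it is cited from Joswig's book (Theorem~1.13), with only an informal sketch via convex conjugates in the preceding paragraphs --- and your normal-fan argument (identifying the active pieces at $x_0$ with the face of $P_{\bar f}$ maximizing $(x_0,1)$, characterizing smooth points as those whose face is a vertex, and slicing the open normal cones by $\mathbb{R}^n\times\{1\}$ to get the clopen connected pieces) is precisely the standard proof that the citation stands in for, consistent with the paper's surrounding discussion.
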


The duality between faces of the nonlinear locus and faces of the lifted Newton polytope goes beyond what is described in Proposition~\ref{prop:ftt}, and it is fundamental for studying the combinatorics of tropical hypersurfaces. %

\subsection{Minkowski sums}
The lifted Newton polytopes for single layer networks have a description as Minkowski sums.
Recall that the Minkowski sum of two sets $A$ and $B$ is defined as $A+B = \{a+b\colon a\in A, b\in B\}$.
For a single layer of $m$ maxout units of ranks $k_1,\ldots, k_m$, %
$$
\bar f(x)
\!=\! \sum_{j\in[m]} \max_{r_j\in[k_j]}\{\langle w_{j,r_j}, x\rangle + b_{j,r_j}\}
\!=\! \max_{r\in [k_1]\times \cdots\times[k_m]} \Big\{ \sum_{j\in[m]} \langle w_{j,r_j}, x\rangle + b_{j,r_j}\Big\}
\!=\! \max_{r} \{\langle w_r  , x\rangle + b_r\},
$$
where $[k]:=\{1,\ldots, k\}$, and $w_r = \sum_{j\in[m]} w_{j,r_j}$, $b_r = \sum_{j\in[m]} b_{j,r_j}$, for $r\in [k_1]\times \cdots\times[k_m]$.
To see this one may use that the distributive law holds for tropical addition and multiplication %
 \cite[Section~1.1]{MaclaganSturmfels15}.
Notice that the resulting set of coefficients is the Minkowski sum of the sets of coefficients of the individual units,
$\{(w_r,b_r)\colon r\in [k_1]\times\dots\times[k_m]\} = \sum_{j\in[m]} \{(w_{j,r_j},b_{j,r_j})\colon r_j\in[k_j]\}$.
Now consider the lifted Newton polytope of the %
layer and the polytopes of the individual units,
$
P_{[m]} = \conv\left\{(w_r,b_r)\colon r\in [k_1]\times\dots\times[k_m]\right\}$,
$P_j= \conv \left\{(w_{j,r_j},b_{j,r_j})\colon r_j\in[k_j]\right\}, \; j\in[m]$.
Since the Minkowski sum of convex sets is convex, we have
\begin{equation*}
P_{[m]} = \sum_{j\in[m]} P_{j}.
\end{equation*}
In turn, the polytope for a layer is obtained by taking the Minkowski sum of the polytopes of the individual units.
This is all we will need in our discussion.
We note that, following the arguments of \cite{pmlr-v80-zhang18i}, one can also describe how the polytopes corresponding to layers are combined to obtain a polytope for a deep network. That work focused on ReLU networks, but the same arguments extend naturally to maxout networks.

We collect a few observations in the next proposition.
For maxout units of ranks $k_1,\ldots, k_m$ the polytopes $P_1,\ldots, P_m$ are arbitrary convex hulls of, respectively, $k_1,\ldots, k_m$ points in $\mathbb{R}^{n+1}$.
For units without biases, the last coordinate of the coefficients is always zero, so that the polytopes are in $\mathbb{R}^n\times\{0\}$ and the upper vertices are simply the vertices.

\begin{proposition}
\label{prop:uppervertreg}
The %
linear regions of a function represented by a layer with $n$ inputs and maxout units of ranks $k_1,\ldots, k_m$
correspond to the upper vertices of a Minkowski sum of polytopes which are convex hulls of $k_1,\ldots, k_m$ points in $\mathbb{R}^{n+1}$.
For a layer without biases, the %
linear regions correspond to the %
vertices of a Minkowski sum of polytopes which are convex hulls of $k_1,\ldots, k_m$ points in $\mathbb{R}^n$.
\end{proposition}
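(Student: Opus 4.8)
The plan is to reduce the vector-valued layer function $f=(f_1,\dots,f_m)$ to the scalar surrogate $\bar f=\sum_{i=1}^m f_i$, apply Proposition~\ref{prop:ftt} to $\bar f$, and identify the resulting lifted Newton polytope with the Minkowski sum $\sum_{j\in[m]}P_j$ computed in the discussion preceding the statement.

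First I would note that for a single layer each output coordinate $f_i$ is a maximum of affine functions, hence convex, so in the decomposition $f_i=g_i-h_i$ one may take $g_i=f_i$ and $h_i=0$; the surrogate is then simply $\bar f=\sum_i f_i$. I would then check that $f$ and $\bar f$ have the \emph{same} nonlinear locus, so that their linear regions coincide exactly. Indeed $V(f)=\bigcup_i V(f_i)$, because the Jacobian of $f$ is discontinuous at $x$ exactly when some $\nabla f_i$ is; and since each $f_i$ is convex piecewise linear, $x\in V(f_i)$ iff the subdifferential $\partial f_i(x)$ is not a single point. As the subdifferential of a sum of convex functions is the Minkowski sum of the subdifferentials, $\partial\bar f(x)=\sum_i\partial f_i(x)$ fails to be a single point precisely when some $\partial f_i(x)$ does, whence $V(\bar f)=\bigcup_i V(f_i)=V(f)$. (If one only cares about the maximal count $N(\mathcal N)$, the remark above that $f$ and $\bar f$ have equally many linear regions for generic parameters already suffices.)

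Next I would apply Proposition~\ref{prop:ftt} to $\bar f$ written in the form $\bar f(x)=\max_r\{\langle w_r,x\rangle+b_r\}$ with $w_r=\sum_j w_{j,r_j}$ and $b_r=\sum_j b_{j,r_j}$, as derived above: its linear regions correspond to the upper vertices of the lifted Newton polytope $P_{\bar f}=\conv\{(w_r,b_r):r\in[k_1]\times\cdots\times[k_m]\}$. The distributivity computation preceding the statement identifies this polytope with the Minkowski sum $\sum_{j\in[m]}P_j$, where $P_j=\conv\{(w_{j,r_j},b_{j,r_j}):r_j\in[k_j]\}$ is the convex hull of $k_j$ points in $\mathbb{R}^{n+1}$; this gives the first claim.

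For a layer without biases every $b_{j,r_j}$ vanishes, so each $P_j$, and hence their Minkowski sum, lies in the horizontal hyperplane $\mathbb{R}^n\times\{0\}$, where it may be identified with $\conv\{w_{j,r_j}:r_j\in[k_j]\}\subseteq\mathbb{R}^n$. Here I would observe that for any polytope $P\subseteq\mathbb{R}^n\times\{0\}$, adding a multiple of $(0,\dots,0,1)$ to a vector $r$ does not change $\langle r,p-q\rangle$ when $p,q$ have vanishing last coordinate, so every normal cone is invariant under translation in the last-coordinate direction; hence the normal cone of a vertex meets $\mathbb{R}^n\times\mathbb{R}_{>0}$ as soon as it is nonempty, and the upper vertices of $P$ are exactly its vertices. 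This gives the second claim. The only genuinely delicate point is the \emph{exact} (not merely generic) matching of the linear regions of $f$ and $\bar f$, which the subdifferential argument handles; everything else is bookkeeping with the Newton polytope and Minkowski sum formulas already established.
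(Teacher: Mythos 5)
Your proof is correct and follows essentially the same route as the paper, which presents this proposition as a collection of the observations made just before it (the surrogate $\bar f$, Proposition~\ref{prop:ftt}, the Minkowski-sum identity for the lifted Newton polytope, and the remark that for bias-free units the polytopes lie in $\mathbb{R}^n\times\{0\}$ so upper vertices are all vertices). Your subdifferential argument that $V(f)=V(\bar f)$ holds exactly for a single layer (not merely generically) is a nice tightening of the paper's more cautious general discussion, but does not change the overall approach.
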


The difference between counting the vertices vs %
the upper vertices of Minkowski sums of polytopes is analogous to the difference between counting the regions of a central arrangement (without biases) vs %
counting the regions of a non-central arrangement (with biases). %
The nonlinear locus $V(f)$ of a function $f$ represented by a layer of maxout units without biases is the normal fan of the Newton polytope.
The nonlinear locus of a function with biases is the intersection of the normal fan of the lifted Newton polytope in $\mathbb{R}^{n+1}$ and the affine space $\mathbb{R}^n\times \{1\}$.
The hardest part in our proofs will be to upper bound the number of regions of non-central arrangements.

A result of particular importance in our analysis is the Upper Bound Theorem for Minkowski Sums by Adiprasito and Sanyal \cite[Theorem 6.11]{KarimRaman}, which shows that the maximum number %
of $s$-dimensional faces %
of Minkowski sums of polytopes with given numbers of vertices %
is attained by sums of %
so-called Minkowski neighborly families.
From their result we derive the following proposition, which we will later use to obtain an explicit form of the upper bound %
for vertices. %
It will also be an ingredient in our upper bound for upper vertices.
For a 
polytope $P$, let $f_s(P)$ denote the number of $s$-dimensional faces of $P$.

\begin{proposition}
\label{thm:Mneighborly_upper_bound}
Let $0\leq s\leq n$. %
If a Minkowski sum of polytopes $P = P_1+\cdots+P_m\subseteq\mathbb{R}^{n+1}$ has the maximum number of $s$-faces among all sums with given $f_0(P_1),\ldots, f_0(P_m)$, then $f_0(\sum_{i\in S}P_i)=\prod_{i\in S}f_0(P_i)$ for all $S\subseteq [m]$, $|S|\leq n$. %
\end{proposition}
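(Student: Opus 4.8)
The plan is to argue by contradiction: suppose $P = P_1 + \cdots + P_m$ attains the maximum number of $s$-faces for the prescribed vertex counts $f_0(P_1), \ldots, f_0(P_m)$, but there is some $S \subseteq [m]$ with $|S| \leq n$ and $f_0(\sum_{i \in S} P_i) < \prod_{i \in S} f_0(P_i)$. The intuition is that a ``deficient'' subsum wastes combinatorial complexity that could otherwise be spent on $s$-faces of the total sum, so $P$ cannot have been optimal. To make this precise I would invoke the characterization coming from Adiprasito--Sanyal's Upper Bound Theorem for Minkowski sums: the maximum number of $s$-faces is attained exactly by Minkowski neighborly families. So the first step is to recall that a family $P_1, \ldots, P_m$ of polytopes is Minkowski neighborly (up to the relevant dimension) precisely when every subsum over $S$ with $|S|$ not too large is ``as complex as possible,'' which for vertices means $f_0(\sum_{i \in S} P_i) = \prod_{i \in S} f_0(P_i)$; equivalently, the Cayley polytope of the family is neighborly in the appropriate range. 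The content I need is that $s$-face-maximality forces Minkowski neighborliness, and Minkowski neighborliness forces the product formula for vertices of small subsums.

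The cleanest route is probably to reduce to the vertex case directly. First I would note that for any Minkowski sum, $f_0(\sum_{i\in S} P_i) \leq \prod_{i \in S} f_0(P_i)$ always holds, since every vertex of a Minkowski sum is a sum of vertices of the summands. Next, by a standard projection/deletion argument one can relate the number of $s$-faces of $P = \sum_{i\in[m]} P_i$ to the face numbers of subsums: choosing a generic linear functional and passing to appropriate faces, or using the iterated structure of Cayley polytopes, one sees that an optimal $P$ must restrict to optimal behavior on each subsum of size at most $n$. Concretely, if some subsum $\sum_{i\in S} P_i$ had strictly fewer than $\prod_{i\in S} f_0(P_i)$ vertices, one could perturb the point configurations defining the $P_i$ for $i \in S$ (keeping $f_0$ fixed, i.e.\ keeping all $k_i$ points in convex position) into general position so that the subsum achieves the full product, and then check that this perturbation does not decrease $f_s(P)$ — in fact it strictly increases the number of low-dimensional faces, contradicting maximality. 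The key lemma enabling this is that $f_s$ of the total sum is monotone in the face numbers of the subsums in the regime $|S|\le n$, which is exactly the structural content extracted from \cite[Theorem 6.11]{KarimRaman}.

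The main obstacle I anticipate is making the perturbation argument rigorous without re-deriving the Upper Bound Theorem: I need to know that the optimizing configurations are genuinely the Minkowski neighborly ones and that ``neighborly'' unpacks to the stated product formula for all $S$ with $|S| \le n$, not just for $|S|$ bounded by some smaller quantity like $\lfloor (n+1)/2 \rfloor$. The dimension bookkeeping here is delicate: the summands live in $\mathbb{R}^{n+1}$, the Cayley polytope lives in dimension $n+1+(m-1)$, and the neighborliness threshold for the Cayley polytope must be translated back into a statement about subsums of size $\le n$ in the original $\mathbb{R}^{n+1}$. I would handle this by carefully citing the precise form of Adiprasito--Sanyal's theorem and the definition of Minkowski neighborliness, then verifying that a subsum of $|S| \le n$ polytopes, which spans at most dimension $n+1$, achieves its maximal vertex count $\prod_{i\in S} f_0(P_i)$ exactly when the points are in sufficiently general position — a fact that for a sum of $|S|$ polytopes with $\sum_{i\in S}(f_0(P_i)-1) + 1$ vertices in general position is classical. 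Once the threshold arithmetic is pinned down, the contradiction follows: a non-product subsum would witness non-neighborliness, contradicting $f_s$-maximality via \cite[Theorem 6.11]{KarimRaman}.
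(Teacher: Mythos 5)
Your high-level strategy coincides with the paper's: cite Adiprasito--Sanyal's Theorem~6.11 to characterize the $f_s$-maximizers and then translate that characterization into the product formula for vertex counts of subsums with $|S|\leq n$. The genuine gap is exactly the step you defer as ``the main obstacle'': that translation \emph{is} the entire content of the proof, and you never carry it out. Concretely, Theorem~6.11 characterizes $k_0$-face maximizers by maximality of the entries $h_{k+m-1}$, $k\le k_0+1$, of the $h$-vector of the relative Cayley complex. One must then (i) use the Dehn--Sommerville relations to reduce to the entries with $k+m-1\le\lfloor(d+m-1)/2\rfloor$ of the $h$-vectors of all subfamilies $(P_i)_{i\in U}$; (ii) convert maximality of those entries, via Theorem~6.11(2a), into the condition that all nonfaces of the Cayley polytopes $T_S$ of cardinality at most $\lfloor(d-(m'-1))/2\rfloor+|S|-1$ are supported in some $V(T_R)$, $R\subsetneq S$; and (iii) run the case analysis over $m'=d-r$, $r\geq 1$, to see that this forces every selection of one vertex from each $P_i$, $i\in S$, to be a vertex of $P_S$ whenever $|S|\le n$. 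Your own worry that neighborliness might only reach $|S|\le\lfloor(n+1)/2\rfloor$ is precisely the point: neighborliness naively controls only ``half the dimension,'' and it is the interplay of Dehn--Sommerville with the subfamily structure that pushes the conclusion out to all $|S|\le n$. Without steps (i)--(iii) the proposition is not established.

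The perturbation route you offer as an alternative also does not work as stated. It rests on the claim that $f_s(P)$ is monotone in the face numbers of the subsums $P_S$, so that repairing a deficient subsum would strictly increase $f_s(P)$. But the relations between $f_s(P)$ and the $f_s(P_S)$ (e.g.\ Weibel's formula, Theorem~\ref{thm:f_vectors_upper_part_Minkowski_sum}) are alternating sums with both signs, so no such monotonicity is evident; and Theorem~6.11 is an if-and-only-if characterization via $h$-vectors of the relative Cayley complex, not a monotonicity statement, so it cannot be cited to supply that lemma. You would either have to prove the monotonicity separately or abandon the perturbation argument in favor of the direct unpacking above.
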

Intuitively, this proposition states that if a sum of polytopes in $\mathbb{R}^n$ with given vertex counts reaches the largest possible number of vertices, then each partial sum of at most $n$ of the polytopes reaches a trivial upper bound on the number of vertices.

\begin{proof}
We explain how to derive the claim based on %
\cite{KarimRaman}.
We refer the reader to that paper for details on Minkowski neighborly families, Cayley polytopes, (relative) Cayley complexes, $h$-vectors, and the corresponding Dehn-Sommerville relations.
That paper shows that a Minkowski sum $P_1+\cdots+ P_m$ of polytopes $P_i\subseteq\mathbb{R}^d$ attains the maximum number of $k$-faces, $0\leq k\leq d$,  %
if the family $(P_1,\ldots, P_m)$ is Minkowski neighborly.
(Following their notation, here we use $k$ for the dimension of the faces). %
Of particular interest is the classification of cases maximizing the number of faces of a particular dimension $k_0$.
Following \cite[Theorem 6.11]{KarimRaman}, for a given $k_0$, a Minkowski sum $P_1+\cdots+P_m$ attains the maximum number of $k_0$-faces if and only if the $h$-vector of its relative Cayley complex attains maximum value at all entries $h_{k+m-1}$ with $k\leq k_0+1$.
By the Dehn-Sommerville relations, the entries with $k+m-1>\lfloor\frac{d+m-1}{2}\rfloor$ are determined as positive linear functions of the entries with $k+m-1\leq \lfloor\frac{d+m-1}{2}\rfloor$ of the $h$-vectors for sub-families $(P_i)_{i\in U}$, $U\subseteq [m]$.
Hence we only need to verify that the latter entries are maximal.
By \cite[Theorem~6.11(2a)]{KarimRaman}, this is equivalent to verifying that for any $k'+m'-1 \leq \lfloor\frac{d+m'-1}{2}\rfloor$ and $U\subseteq[m]$, $|U|=m'$,
the following holds.
For all $S\subseteq U$, all nonfaces of the Cayley polytope $T_S$ of cardinality $k'+|S|-1$ are supported in some vertex set $V(T_R)$ with $R\subsetneq S$.
One can rewrite the cases as $k'\leq \lfloor\frac{d+m'-1}{2}\rfloor - (m'-1)$ and nonfaces of cardinality $\leq  \lfloor\frac{d-(m'-1)}{2}\rfloor  + |S|-1$.
For $m'\geq d$, the condition is trivially satisfied.
Hence we only need to check cases with $m'=d-r$, $r\geq 1$, and nonfaces of cardinality $\leq \lfloor\frac{r+1}{2}\rfloor + |S|-1$, where $|S|\leq d-r$, $r\geq1$.
This means that for any $S\subseteq[m]$ of cardinality $|S|\leq n$, any selection of one vertex from each $P_i$, $i\in S$, results in a vertex of the polytope $P_S=\sum_{i\in S} P_i$, and hence that  $P_S$ attains the trivial upper bound on the number of vertices, $f_0(P_S) = \prod_{i\in S} f_0(P_i)$.
\end{proof}

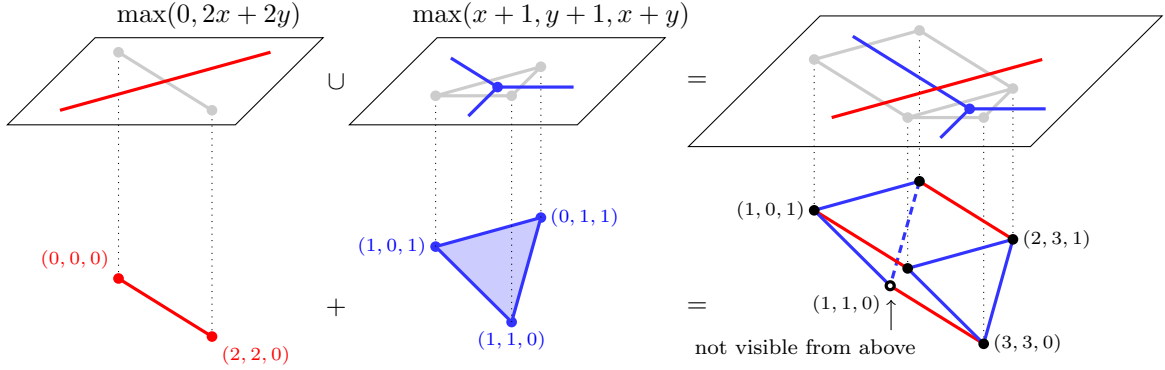
\begin{figure}
  \centering
  \begin{tikzpicture}
    \node (edge) at (-1,0)
    {%
      \begin{tikzpicture}[font=\scriptsize]
        \useasboundingbox (-2.5,0,-2.5) -- (0.5,0,-2.5) -- (0.5,0,0.5) -- (-2.5,0,0.5) -- cycle;
        \draw[dotted] (0,0,0) -- ++(0,3)
        (-2,0,-2) -- ++(0,3);
        \fill[red] (0,0,0) circle (2pt)
        (-2,0,-2) circle (2pt);
        \draw[red, very thick] (0,0,0) -- (-2,0,-2);
        \node[anchor=south east,red] at (-2,0,-2) {$(0,0,0)$};
        \node[anchor=north west,red] at (0,0,0) {$(2,2,0)$};      \end{tikzpicture}
    };
    \node (edgeDual) at (-1,3)
    {%
      \begin{tikzpicture}
        \useasboundingbox (-2.5,0,-2.5) -- (0.5,0,-2.5) -- (0.5,0,0.5) -- (-2.5,0,0.5) -- cycle;
        \draw (-2.5,0,-2.5) --  node[anchor=south] {$\max(0,2x+2y)$}  (0.5,0,-2.5) -- (0.5,0,0.5) -- (-2.5,0,0.5) -- cycle;
        \fill[black!20] (0,0,0) circle (2pt)
        (-2,0,-2) circle (2pt);
        \draw[black!20,very thick] (0,0,0) -- (-2,0,-2);
        \draw[very thick,red] (-1,0,-1) -- ++(1,0,-1)
        (-1,0,-1) -- ++(-1,0,1);
      \end{tikzpicture}
    };
    \node (triangle) at (3.5,0)
    {%
      \begin{tikzpicture}[font=\scriptsize]
        \useasboundingbox (-0.75,0,-1) -- (2.25,0,-1) -- (2.25,0,2) -- (-0.75,0,2) -- cycle;
        \fill[blue!80] (1,1,0) circle (2pt)
        (0,1,1) circle (2pt)
        (1,0,1) circle (2pt);
        \draw[blue!80,fill=blue!20,very thick] (1,1,0) -- (0,1,1) -- (1,0,1) -- cycle;
        \draw[dotted] (1,1,0) -- ++(0,2)
        (0,1,1) -- ++(0,2)
        (1,0,1) -- ++(0,3);
        \node[anchor=east,blue] at (0,1,1) {$(1,0,1)$};
        \node[anchor=west,blue] at (1,1,0) {$(0,1,1)$};
        \node[anchor=north,blue] at (1,0,1) {$(1,1,0)$};
      \end{tikzpicture}
    };
    \node (triangleDual) at (3.5,3)
    {%
      \begin{tikzpicture}
        \useasboundingbox (-0.75,0,-1) -- (2.25,0,-1) -- (2.25,0,2) -- (-0.75,0,2) -- cycle;
        \draw (-0.75,0,-1) -- node[anchor=south] {$\max(x+1,y+1,x+y)$}  (2.25,0,-1) -- (2.25,0,2) -- (-0.75,0,2) -- cycle;
        \fill[black!20] (1,0,0) circle (2pt)
        (0,0,1) circle (2pt)
        (1,0,1) circle (2pt);
        \draw[black!20,very thick, line join=bevel %
        ] (1,0,0) -- (0,0,1) -- (1,0,1) -- cycle;
        \coordinate (o) at (0.7,0,0.7);
        \fill[blue!80] (o) circle (2pt);
        \draw[very thick,blue!80] (o) -- ++(1,0,0)
        (o) -- ++(0,0,1)
        (o) -- ++(-1,0,-1);
      \end{tikzpicture}
    };
    \node (minkowski) at (9,0)
    {%
      \begin{tikzpicture}[font=\scriptsize]
        \useasboundingbox (-2.5,0,-2.5) -- (2,0,-2.5) -- (2,0,2) -- (-2.5,0,2) -- cycle;
        \node[anchor=west] at (1,1,0) {$(2,3,1)$};
        \node[anchor=west] at (1,0,1) {$(3,3,0)$};
        \node[anchor=east] at (-2,1,-1) {$(1,0,1)$};
        \node[anchor=north east] at (-1,0,-1) {$(1,1,0)$};
        \draw[dotted] (-2,1,-1) -- ++(0,2)
        (-1,1,-2) -- ++(0,2)
        (0,1,1) -- ++(0,2)
        (1,1,0) -- ++(0,2)
        (1,0,1) -- ++(0,3);
        \draw[blue!80,very thick, densely dashed]
        (-1,1,-2) -- (-1,0,-1);
        \draw[red,very thick]
        (-2,1,-1) -- (0,1,1)
        (-1,0,-1) -- (1,0,1)
        (-1,1,-2) -- (1,1,0);
        \draw[blue!80,very thick]
        (1,1,0) -- (0,1,1) -- (1,0,1) -- cycle
        (-2,1,-1) -- (-1,0,-1)
        (-1,1,-2) -- (-2,1,-1);
        \fill (1,1,0) circle (2pt)
        (0,1,1) circle (2pt)
        (1,0,1) circle (2pt)
        (-1,1,-2) circle (2pt)
        (-2,1,-1) circle (2pt);
        \fill[draw=black,fill=white,very thick]
        (-1,0,-1) circle (1.5pt);
      \end{tikzpicture}
    };
    \node (minkowskiDual) at (9,3)
    {%
      \begin{tikzpicture}
        \draw (-2.5,0,-2.5) -- (2,0,-2.5) -- (2,0,2) -- (-2.5,0,2) -- cycle;
        \fill[black!20] (1,0,0) circle (2pt)
        (0,0,1) circle (2pt)
        (1,0,1) circle (2pt)
        (-1,0,-2) circle (2pt)
        (-2,0,-1) circle (2pt);
        \draw[black!20,very thick,line join=bevel] (1,0,0) -- (0,0,1) -- (1,0,1) -- cycle
        (1,0,0) -- (-1,0,-2) -- (-2,0,-1) -- (0,0,1);
        \coordinate (o) at (0.7,0,0.7);
        \fill[blue!80] (o) circle (2pt);
        \draw[very thick,blue!80] (o) -- ++(1,0,0)
        (o) -- ++(0,0,1)
        (o) -- ++(-2.5,0,-2.5);
        \draw[very thick,red] (0,0,0) -- ++(1,0,-1)
        (0,0,0) -- ++(-1,0,1);
      \end{tikzpicture}
    };
    \node at (1.25,0) {$+$};
    \node at (1.25,3) {$\cup$};
    \node at (6,0) {$=$};
    \node at (6,3) {$=$};
    \node[font=\footnotesize,anchor=north east] (visibleText) at (9,-0.3) {not visible from above};
    \draw[->] ($(visibleText.north east)+(-0.45,0)$) -- ++(0,0.4);
    \end{tikzpicture}\vspace{-3mm}
  \caption{Upper vertices of Minkowski sums and regions of maxout arrangements. %
}
  \label{fig:my_label}
\end{figure}

\section{Number of linear regions for maxout networks}
\label{sec:main}
In this section we present our main results on the maximum number of linear regions of maxout networks.
First we provide %
general observations, then %
turn to our main results on shallow networks,
and derive implications for deep networks.
For shallow networks, we obtain sharp bounds and provide a construction that attains them.
The main analysis will be conducted in the upcoming Sections~\ref{sec:Weibel},~\ref{sec:Zaslavsky}, and~\ref{sec:Weibel-Zaslavsky}.

\subsection{General observations}
We begin with a simple general upper bound on the number of linear regions of the functions represented by maxout networks:

\begin{proposition}[Simple upper bound on the number of regions]
\label{prop:simple-upper-bound}
For any maxout network $\mathcal{N}$ with a total of $m$ maxout units of ranks $k_1,\ldots, k_m\in\mathbb{N}$,  $N(\mathcal{N})\leq \prod_{j=1}^m k_j$.
\end{proposition}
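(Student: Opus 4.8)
The plan is to bound $N(f)$ for an \emph{arbitrary} $f\in\mathcal N$ by constructing an injection from the set of linear regions of $f$ into the set of \emph{activation patterns} of the network, of which there are exactly $\prod_{j=1}^m k_j$. An activation pattern is a tuple $\rho=(\rho_u)_u$ assigning to every maxout unit $u$ of the network (ranging over all $m$ units across all layers) an index $\rho_u$ of one of its pre-activation features; since a rank-$k_j$ unit has $k_j$ features, there are $\prod_{j=1}^m k_j$ patterns. Given $\rho$, let $g_\rho\colon\mathbb R^{n_0}\to\mathbb R^{n_L}$ be the affine map obtained from $f$ by replacing, in every unit, the maximum by its $\rho_u$-th pre-activation feature (a composition of affine maps, hence affine), and let $A_\rho\subseteq\mathbb R^{n_0}$ be the set of inputs $x$ at which, for every unit $u$, the $\rho_u$-th feature does attain the maximum at that unit when the network is evaluated at $x$.

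First I would show, by induction on the number of layers, that each $A_\rho$ is a closed convex polyhedron and that $f$ agrees with $g_\rho$ on $A_\rho$: granting the statement for the sub-network of the first $l-1$ layers, that sub-network equals an affine map on (the constraint set coming from) those layers, so the requirement that the $\rho_u$-th feature wins at a unit $u$ of layer $l$ becomes an intersection of affine half-spaces in $x$; intersecting these over the units of layer $l$ with the polyhedron from the first $l-1$ layers keeps the set convex and keeps $f$ affine (equal to $g_\rho$) on it. Second, every $x$ lies in some $A_\rho$ (choose, layer by layer, a maximizing feature at each unit), so $\bigcup_\rho A_\rho=\mathbb R^{n_0}$; hence $\bigcup_\rho(A_\rho\setminus\operatorname{int}A_\rho)\subseteq\bigcup_\rho\partial A_\rho$ is a finite union of closed sets with empty interior, so it is nowhere dense and $\bigcup_\rho\operatorname{int}(A_\rho)$ is dense. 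Third, on $\operatorname{int}(A_\rho)$ the map $f=g_\rho$ is affine, so $\operatorname{int}(A_\rho)$ is disjoint from the nonlinear locus $V(f)$, and being convex it is connected, hence contained in a single linear region of $f$.

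Combining these: each linear region $R$ of $f$ is nonempty and open, hence meets the dense set $\bigcup_\rho\operatorname{int}(A_\rho)$, so $R\cap\operatorname{int}(A_\rho)\neq\emptyset$ for some $\rho$; and since $\operatorname{int}(A_\rho)$ is connected and avoids $V(f)$, it is contained in every linear region it meets, so the region attached to $\rho$ is unique. This gives an injection from linear regions of $f$ to activation patterns, whence $N(f)\le\prod_{j=1}^m k_j$ for all $f\in\mathcal N$ and therefore $N(\mathcal N)\le\prod_{j=1}^m k_j$.

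I expect the main obstacle to be conceptual rather than computational. For a single maxout layer the cells $A_\rho$ are exactly the linear regions and one just counts them, but for a deep network $V(f)$ can be strictly smaller than $\bigcup_\rho\partial A_\rho$ (gradient contributions of different units can cancel) and an intermediate function need not be affine on a linear region of $f$; so a single linear region may be a union of several activation cells, and a naive layer-by-layer recursion $N(f_L\circ\cdots\circ f_1)\le\prod_l N(f_l)$ is not obviously valid because preimages of regions under affine maps need not stay connected. The density argument together with the convexity (hence connectedness) of each $\operatorname{int}(A_\rho)$ is precisely what repairs this and upgrades the count of cells into an honest injection.
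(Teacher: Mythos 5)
Your proof is correct and takes essentially the same route as the paper's: both enumerate activation patterns, observe that the set of inputs realizing a fixed pattern is a convex polyhedron on which $f$ agrees with an affine map, and conclude that the number of linear regions is at most the number of patterns, $\prod_{j=1}^m k_j$. The density-plus-connectedness argument you give to turn the polyhedral cover into an injection from linear regions to patterns is a careful justification of the final counting step, which the paper asserts more briefly.
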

\begin{proof}
Let $n_0$ be the number of inputs. Let $L$ be the number of layers and denote their widths $n_1,\ldots, n_L$. Write $k_{l,j}$ for the rank of the $j$th unit $j=1,\ldots, n_l$ in the $l$th layer $l=1,\ldots,L$. Fix the parameters $A_{l,j,r}\in\mathbb{R}^{n_{l-1}}$, $b_{l,j,r}\in \mathbb{R}$ of all preactivation features $r=1,\ldots, k_{l,j}$, of all units $j=1,\ldots, n_l$, of all layers $l=1,\ldots,L$.
Then, for each input $x$, the represented function $f$ takes the form $(\bar A_{L} \cdots \bar A_{1}) x +  (\sum_{l=1}^L  \bar A_L \cdots \bar A_{l+1} \bar b_l)$, where each $\bar A_l \colon \mathbb{R}^{n_0}\to\mathbb{R}^{n_l\times n_{l-1}}$ and $\bar b_l\colon\mathbb{R}^{n_0}\to\mathbb{R}^{n_l}$ is a piecewise constant function of $x$ having $j$th row equal to one of the $k_{l,j}$ values $A_{l,j,1},\ldots,A_{l,j,k_{l,j}}\in\mathbb{R}^{n_{l-1}}$ and $b_{l,j,1},\ldots, b_{l,j,k_{l,j}}\in\mathbb{R}$, depending on which of the preactivation features assumes the maximum. %
The list of preactivation features that assume the maximum for each unit is called the activation pattern of the network at the particular input. %
The set of inputs with a particular activation pattern is determined by a list of linear inequalities and hence it is a convex polyhedron. %
In summary, the input space is split into at most
$\prod_{l=1}^L\prod_{i=1}^{n_l} k_{l,i}$ connected regions on each of which $f$ is linear.
\end{proof}

The following proposition states that generic perturbations of the parameters do not decrease the number of linear regions. Here, generic means up to a null set with respect to the Lebesgue measure in parameter space.
This is important, as later it will allow us to obtain sharp upper bounds by considering polytopes that are in general orientation.

\begin{proposition}[Generic perturbations of parameters do not decrease the number of regions]%
Consider a network $\mathcal{N}$ consisting of a finite number of maxout units.
Let $f_\theta\in \mathcal{N}$. Then there exists an $\epsilon=\epsilon(\theta)>0$ such that for %
generic $\theta'$ with $\|\theta'-\theta\|<\epsilon$, $N(f_{\theta'})\geq N(f_{\theta})$.
\end{proposition}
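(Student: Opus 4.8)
The plan is to transplant, for every parameter $\vartheta$ sufficiently close to $\theta$, one linear region of $f_\vartheta$ into each of the $N:=N(f_\theta)$ linear regions of $f_\theta$ by tracking activation patterns, and then to use genericity of $\vartheta$ to guarantee that the resulting $N$ regions of $f_\vartheta$ are pairwise distinct. Passing to generic $\vartheta$ is what lets us treat the linear regions of $f_\vartheta$ as activation regions, and this is what makes the distinctness step go through cleanly.

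For the setup, let $R_1,\dots,R_N$ be the linear regions of $f_\theta$; on each $R_i$ the function $f_\theta$ is affine, say $f_\theta|_{R_i}=\ell_i$. For a parameter $\vartheta$, an input $z$, and the $r$th preactivation feature of a maxout unit $u$ of the network, write $g_{u,r}(\vartheta,z)$ for its value, obtained by feeding forward the outputs of the previous layers; each $g_{u,r}$ is continuous in $(\vartheta,z)$. In the interior of each $R_i$ I would pick a point $x_i$ in general position, i.e.\ one at which, at every unit $u$, the maximum $\max_r g_{u,r}(\theta,x_i)$ is attained by a single feature $\rho_i(u)$; such points are dense in $R_i$, since the set of inputs carrying a tie between two features of distinct slope at some unit is closed with empty interior (modulo harmless relaxations to also permit ties between features of equal slope and to merge features coinciding identically, neither of which affects linearity). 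The assignment $\rho_i$ of one feature per unit is then strictly realized at $x_i$: it is the solution set of finitely many strict inequalities $g_{u,\rho_i(u)}(\theta,\cdot)>g_{u,s}(\theta,\cdot)$; this solution set is convex, being an intersection of open halfspaces (cf.\ the proof of Proposition~\ref{prop:simple-upper-bound}), $f_\theta$ is affine on it, equal to the affine function $\ell_{\rho_i}(\theta)$ that the network computes by selecting the feature $\rho_i(u)$ at each unit $u$, and therefore the solution set lies in a single linear region of $f_\theta$ --- namely $R_i$, as it contains $x_i$ --- so that $\ell_{\rho_i}(\theta)=\ell_i$. Finally, shrink the points to pairwise disjoint closed balls $\bar B_i\subseteq R_i$ on which $\rho_i$ is strictly realized with a uniform positive margin.

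Two facts then finish the proof. First, $\rho_1,\dots,\rho_N$ are pairwise distinct, since the set where a fixed pattern is strictly realized lies inside a single linear region of $f_\theta$, so $\rho_i=\rho_j$ would place $x_i$ and $x_j$ in a common region and force $i=j$. Second, by continuity of the $g_{u,r}$ and compactness of the $\bar B_i$ there is $\epsilon>0$ such that for $\|\vartheta-\theta\|<\epsilon$ every $\rho_i$ is still strictly realized throughout $\bar B_i$; then $f_\vartheta$ is affine on $\bar B_i$ with value $\ell_{\rho_i}(\vartheta)$, and $\ell_{\rho_i}(\vartheta)\to\ell_i$ as $\vartheta\to\theta$, since $\vartheta\mapsto\ell_{\rho_i}(\vartheta)$ is a polynomial map (a composition of the affine maps selected by $\rho_i$). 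Now fix such an $\epsilon$. For $\vartheta$ in the $\epsilon$-ball the linear region of $f_\vartheta$ containing $x_i$ carries the affine piece $\ell_{\rho_i}(\vartheta)$, so if $\ell_{\rho_1}(\vartheta),\dots,\ell_{\rho_N}(\vartheta)$ are pairwise distinct then $x_1,\dots,x_N$ lie in $N$ distinct linear regions of $f_\vartheta$ and $N(f_\vartheta)\ge N$. For each pair $i\ne j$ the locus $\{\vartheta:\ell_{\rho_i}(\vartheta)=\ell_{\rho_j}(\vartheta)\}$ is an algebraic subset of parameter space, and it is proper: when $\ell_i\ne\ell_j$ this is already visible at $\vartheta=\theta$, and when $\ell_i=\ell_j$ --- so $R_i\ne R_j$ are distinct regions sharing an affine piece --- it follows from the fact that for generic parameters distinct activation patterns of a maxout network compute distinct affine functions. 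Removing the finitely many measure-zero sets $\{\ell_{\rho_i}(\vartheta)=\ell_{\rho_j}(\vartheta)\}$ from the $\epsilon$-ball leaves a generic set of $\vartheta$ with $N(f_\vartheta)\ge N(f_\theta)$, as claimed.

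The step most in need of care is the last one: when $\ell_i=\ell_j$ one must show that the distinct patterns $\rho_i$ and $\rho_j$ genuinely compute different affine functions for generic parameters, i.e.\ that $\{\ell_{\rho_i}=\ell_{\rho_j}\}$ is a proper subvariety and not all of parameter space. This is exactly the phenomenon behind the genericity hypothesis --- it rules out perturbations that flatten a fold and collapse two regions into one --- and it is cleanest to isolate as a separate lemma on generic parameters of maxout networks, of the same flavor as the statement that generically the linear regions of a network coincide with its activation regions. The bookkeeping needed to select a general-position point $x_i$ in the presence of redundant features or rank-deficient intermediate layers is a secondary, routine complication.
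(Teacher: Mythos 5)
Your argument is correct in substance but takes a genuinely different route from the paper's. The paper passes to the convex surrogate $\bar f_\theta$ of \eqref{eq:maxoutnetworkconvexfunction}, identifies its linear regions with upper vertices of the lifted Newton polytope (Proposition~\ref{prop:ftt}), and invokes lower semi-continuity of the face numbers of polytopes under Hausdorff perturbation; genericity enters only at the very end, to identify $N(f_{\theta'})$ with $N(\bar f_{\theta'})$. You instead work directly in input space: one witness point per linear region carrying a strict activation pattern, persistence of strict patterns under perturbation by compactness and continuity, and genericity to separate patterns that happen to compute the same affine map at $\theta$. Your route is more elementary and self-contained (no convex duality or polytope theory), at the price of needing the separating lemma that distinct activation patterns generically compute distinct affine functions; that lemma is of exactly the same nature as the unproved genericity fact the paper's own proof leans on (``the linear regions of $f_\theta$ and $\bar f_\theta$ are equal for generic parameters''), so neither argument is strictly more complete than the other.

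One caution about the step you dismiss as routine. When two preactivation features of a unit coincide identically near $x_i$ at $\theta$ (so no strict pattern exists and you merge them into one class), the perturbed features generically separate, and the sign of their difference can change inside $\bar B_i$; then $f_\vartheta$ need not be affine on $\bar B_i$, and ``$\ell_{\rho_i}(\vartheta)$'' is not a single affine function, so the sentence ``the linear region of $f_\vartheta$ containing $x_i$ carries the affine piece $\ell_{\rho_i}(\vartheta)$'' breaks down as written. The repair: every linear region of $f_\vartheta$ meeting a small neighborhood of $x_i$ carries $\ell_\sigma(\vartheta)$ for some pattern $\sigma$ refining the merged pattern $\rho_i$; all such $\ell_\sigma(\theta)$ equal $\ell_i$; and any refinement of $\rho_i$ is still a pattern distinct from any refinement of $\rho_j$ for $i\neq j$, so your separating lemma applies verbatim after replacing $x_i$ by a nearby point off the nonlinear locus of $f_\vartheta$. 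This costs no new idea, but it should be written out, since it is precisely the degenerate configuration the proposition is meant to escape from.
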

\begin{proof}
The intuition is that every linear region of $f_\theta$ contains a neighborhood of an input point $x_0$, and small perturbations of the parameter $\theta$ only cause small changes in the distance between $x_0$ and the nonlinear locus $V(f_\theta)$, so that no linear regions can `disappear' under small perturbations of the network parameters.
The formal argument is based on the correspondence between regions and upper vertices of the lifted Newton polytope, Proposition~\ref{prop:ftt}.
Consider the function $f_\theta$ represented by the network $\mathcal{N}$ with parameter $\theta$, and the corresponding convex function $\bar f_\theta=\max_j\{\langle a_j(\theta),x\rangle +b_j(\theta)\}$ described in~\eqref{eq:maxoutnetworkconvexfunction}.
The lifted Newton polytope of $\bar f_\theta$ is the convex hull of points $(a_j(\theta),b_j(\theta))\in\mathbb{R}^{n+1}$ that have a continuous, in fact polynomial, parametrization in $\theta$.
The statement now follows from the lower semi-continuity of the face numbers of polytopes discussed in  \cite[Section~5.3]{Gruenbaum2003}.
More precisely, denote by $\rho$ the Hausdorff metric which is defined as $\rho(A_1,A_2)=\inf\{\alpha>0\colon A_1\subseteq A_2 + B_\alpha, A_2\subseteq A_1 + B_\alpha \}$, where $B_\alpha$ is a radius-$\alpha$ ball around the origin.
If $P$ is a bounded polytope, then there exists an $\epsilon = \epsilon(P)>0$ such that for every $P'$ with $\rho(P', P)< \epsilon$ we have $f_k(P')\geq f_k(P)$ for any $0\leq k\leq n$, where $f_k(P)$ is the number of $k$-faces of $P$.
The same statement clearly applies to the upper faces of polytopes, and hence to the number of linear regions of $\bar f_\theta$.
Since the linear regions of $f_\theta$ and $\bar f_\theta$ are equal for generic choices of parameters, the claim follows.  %
\end{proof}

\subsection{Shallow networks}
For shallow maxout networks, \cite{NIPS2014_5422} obtained the following bounds. The upper bound is based on embedding a maxout arrangement in a hyperplane arrangement and using well known upper bounds for that case. We add a minor improvement which was pointed out in \cite{DBLP:conf/icml/SerraTR18} (substituting $k^2$ with $k(k-1)/2$).

\begin{proposition}[{\cite[Proposition 7]{NIPS2014_5422}}]
\label{proposition:previous-bounds}
For a network $\mathcal{N}$ with $n$ inputs and a single layer of $m$ rank-$k$ maxout units, $k^{\min\{n,m\}}\leq N(\mathcal{N})\leq \sum_{j=0}^{n}{m k (k-1)/2 \choose j}$.
\end{proposition}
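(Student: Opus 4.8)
The plan is to establish the two bounds by independent standard arguments: Buck's hyperplane bound for the upper bound, and a product construction for the lower bound.

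For the \emph{upper bound}, I would first note that the nonlinear locus of a single rank-$k$ maxout unit $x\mapsto\max\{z_1(x),\dots,z_k(x)\}$ with affine $z_i(x)=\langle A_i,x\rangle+b_i$ is contained in the union of the $\binom{k}{2}$ affine hyperplanes $\{z_i=z_j\}$, $1\le i<j\le k$, since the gradient of a pointwise maximum of affine functions is locally constant away from the locus where at least two maximizing features coincide. For a layer of $m$ such units, the nonlinear locus $V(f)$ of the vector-valued map $f$ is the union of the nonlinear loci of its $m$ coordinate functions, hence is contained in an arrangement $H$ of at most $m\binom{k}{2}=mk(k-1)/2$ hyperplanes in $\mathbb{R}^n$. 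Because $V(f)\subseteq H$, every region of $\mathbb{R}^n\setminus H$ is connected and disjoint from $V(f)$, hence lies inside a single linear region of $f$; conversely each linear region of $f$, being open and nonempty, meets $\mathbb{R}^n\setminus H$ and is therefore a union of such regions. Thus $N(f)$ is at most the number of regions of $\mathbb{R}^n\setminus H$, which by Buck's theorem \cite{10.2307/2303424} (recalled in the introduction) is at most $\sum_{j=0}^n\binom{mk(k-1)/2}{j}$; note $mk(k-1)/2$ is an integer since $k(k-1)$ is even. Taking the maximum over $f\in\mathcal{N}$ gives the claimed bound. The improvement over the bound originally stated in \cite{NIPS2014_5422}, pointed out in \cite{DBLP:conf/icml/SerraTR18}, is precisely the replacement of the crude count $k^2$ of hyperplanes per unit by the exact count $\binom{k}{2}$.

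For the \emph{lower bound}, let $\mu:=\min\{n,m\}$ and let $g\colon\mathbb{R}\to\mathbb{R}$ be any univariate convex piecewise linear function with exactly $k$ linear pieces, which is then automatically a maximum of $k$ affine functions, i.e.\ a rank-$k$ maxout unit; denote its $k-1$ breakpoints by $c_1<\dots<c_{k-1}$. Define $f\colon\mathbb{R}^n\to\mathbb{R}^m$ by letting its $j$-th output be $x\mapsto g(x_j)$ for $j=1,\dots,\mu$, and, if $m>\mu$, letting the remaining $m-\mu$ outputs be the identically zero rank-$k$ unit (all pre-activation features equal to $0$). Then $V(f)=\bigcup_{j=1}^{\mu}\bigcup_{t=1}^{k-1}\{x : x_j=c_t\}$, whose complement has exactly $k^{\mu}$ connected components, one for each way of placing each of the $\mu$ coordinates $x_1,\dots,x_\mu$ into one of the $k$ intervals cut out on its axis, and $f$ is affine on each. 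Hence $N(f)=k^{\mu}=k^{\min\{n,m\}}$, so $N(\mathcal{N})\ge k^{\min\{n,m\}}$.

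I do not expect a genuine obstacle here: the statement is a repackaging of Buck's bound together with a product construction. The only points needing care are the containment $V(f)\subseteq H$ — one must check that a pointwise maximum of affine functions is genuinely locally affine off the pairwise-coincidence locus, and that the nonlinear locus of the layer map is exactly the union of the per-coordinate loci and nothing larger — and the bookkeeping observation that biases merely translate the hyperplanes of $H$ off the origin, which does not affect Buck's count (in the bias-free case the same argument applies verbatim to central arrangements, for which the bound also holds).
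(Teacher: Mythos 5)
Your proposal is correct and follows essentially the same route as the paper, which cites this result and only sketches the method: the upper bound by embedding the maxout nonlinear locus into a hyperplane arrangement of at most $mk(k-1)/2$ pairwise-indecision hyperplanes and applying Buck's region count, and the lower bound by a coordinate-wise product of univariate convex piecewise linear functions with $k$ pieces. Both steps are carried out soundly, including the degenerate cases and the surjectivity argument relating regions of the hyperplane arrangement to linear regions of $f$.
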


Notice the significant gap between the lower and upper bounds in Proposition~\ref{proposition:previous-bounds}, of orders $\Omega(k^{n})$ and $O((m k^2)^n)$ in $m$ and $k$.
The construction for the lower bound can 
be generalized and improved, as we show in the next proposition. In Theorem~\ref{thm:main-result} we will show that this lower bound is optimal.

\begin{proposition}[Lower bound for shallow maxout networks]
\label{prop:singleLayerLowerBound}
For a network $\mathcal{N}$ with $n$ inputs and a single layer of $m$ maxout units of ranks $k_1,\ldots,k_m$,
$N(\mathcal{N})\geq\sum_{j=0}^{n} \sum_{S\in {[m]\choose j}} \prod_{i\in S}(k_i-1)$.
The bound is realized if each of the maxout units has a nonlinear locus consisting of $k_i-1$ distinct parallel hyperplanes and the normals of different units are in general position.
\end{proposition}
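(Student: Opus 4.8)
The plan is to realize the lower bound by an explicit construction and a counting argument via the intersection poset of the associated hyperplane arrangement. Choose for the $i$th maxout unit a rank $k_i$ and pre-activation features $z_{i,1},\ldots,z_{i,k_i}$ whose nonlinear locus is a collection of $k_i-1$ distinct parallel hyperplanes. Concretely, pick a normal vector $v_i\in\mathbb{R}^n$ and set $z_{i,r}(x)=r\langle v_i,x\rangle + c_{i,r}$, with the constants $c_{i,r}$ chosen so that the upper envelope $\max_r z_{i,r}$ switches between consecutive linear pieces at $k_i-1$ distinct levels $\langle v_i,x\rangle = t_{i,1}<\cdots<t_{i,k_i-1}$; this is achieved by taking the $c_{i,r}$ to be the values at these breakpoints of any strictly convex sequence. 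Then the nonlinear locus of this unit is exactly the union of the $k_i-1$ parallel hyperplanes $\{\langle v_i,x\rangle = t_{i,s}\}$. Finally choose the normals $v_1,\ldots,v_m$ so that any $n$ of them are linearly independent, i.e.\ the hyperplanes are in general position.

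Next I would identify the linear regions of the layer with the regions of the resulting hyperplane arrangement $\mathcal{H}$, which consists of $\sum_i (k_i-1)$ hyperplanes grouped into $m$ parallel families of sizes $k_1-1,\ldots,k_m-1$. The key point is that the nonlinear locus $V(f)$ of the layer is contained in $\bigcup_i (\text{nonlinear locus of unit } i) = \bigcup \mathcal{H}$, so every region of $\mathcal{H}$ lies in a single linear region of $f$; hence $N(f)\ge r(\mathcal{H})$, the number of regions of $\mathcal{H}$. (One could alternatively invoke Proposition~\ref{prop:uppervertreg} and count vertices of the Minkowski sum of the $m$ one-dimensional needle polytopes, but the arrangement count is cleaner.)

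It then remains to count the regions of $\mathcal{H}$. I would use Zaslavsky's theorem, or more simply the direct fact that the number of regions of a real hyperplane arrangement equals the number of elements of its intersection poset, each flat contributing one region "in general position" — precisely, $r(\mathcal{H}) = \sum_{F} (-1)^{\mathrm{codim}\,F}\mu(\hat 0,F)$, which for an arrangement in general position within each parallel class reduces to counting the flats that actually arise. A flat of $\mathcal{H}$ is obtained by selecting at most one hyperplane from each of the $m$ parallel families (two parallel hyperplanes never meet) and, because of the general-position choice of normals, any such selection of $j\le n$ hyperplanes meets in a flat of codimension exactly $j$, and distinct selections give distinct flats. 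Therefore the number of flats of codimension $j$ is $\sum_{S\in\binom{[m]}{j}}\prod_{i\in S}(k_i-1)$, and summing over $j=0,\ldots,n$ and using the sign-alternation/Zaslavsky formula (all Möbius values being $\pm 1$ here) gives $r(\mathcal{H}) = \sum_{j=0}^{n}\sum_{S\in\binom{[m]}{j}}\prod_{i\in S}(k_i-1)$, as claimed.

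The main obstacle is the clean verification of the two structural facts about the arrangement: first, that the within-unit breakpoints can be arranged to be $k_i-1$ genuinely distinct parallel hyperplanes (a convexity bookkeeping point — one must check that with a strictly convex choice of intercepts no linear piece of the upper envelope is skipped and no two breakpoints coincide); and second, that with generic normals every choice of $\le n$ hyperplanes from distinct families intersects in the expected codimension with no accidental coincidences or empty intersections, so that the intersection poset is exactly the "truncated product of chains" whose rank-$j$ level has size $\sum_{S\in\binom{[m]}{j}}\prod_{i\in S}(k_i-1)$. Once these are in place, plugging into Zaslavsky's region count (Theorem~\ref{thm:posetcounting}) finishes the proof.
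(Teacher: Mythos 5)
Your construction and counting argument are essentially the same as the paper's: it also takes $m$ families of $k_i-1$ parallel hyperplanes with normals in general position and applies Zaslavsky's theorem, citing Stanley's formula for the characteristic polynomial of generic translations of a linear arrangement in place of your direct description of the intersection poset (the two computations are identical, since all M\"obius values are $\pm1$ and the flats are exactly the independent selections of at most $n$ hyperplanes from distinct families). One slip to fix: from $V(f)\subseteq\bigcup\mathcal{H}$ and ``every region of $\mathcal{H}$ lies in a single linear region of $f$'' you can only conclude $N(f)\leq r(\mathcal{H})$, not $\geq$; the inequality you want follows instead from the reverse containment, which holds here with equality because the nonlinear locus of the layer is the union of the nonlinear loci of its units, i.e.\ $V(f)=\bigcup_i V(f_i)=\bigcup\mathcal{H}$, so distinct regions of $\mathcal{H}$ are distinct linear regions of $f$.
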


\begin{proof}
We count the number of regions for the special case where each maxout unit has a nonlinear locus consisting of $k_i-1$ parallel hyperplanes, $i=1,\ldots,m$.
This provides a lower bound on the maximum possible number of regions.
Zaslavsky's theorem \cite{zaslavsky1975facing} states that the number of regions of an arrangement $\mathcal{A}$ of affine hyperplanes in an $n$-dimensional real vector space is $r(\mathcal{A}) = (-1)^n\chi_\mathcal{A}(-1)$, where $\chi_\mathcal{A}$ is the characteristic polynomial of $\mathcal{A}$.
For \emph{generic translations of hyperplanes of a linear arrangement}, Stanley \cite[pg.\ 22]{Stanley04anintroduction} shows that $\chi_\mathcal{A}(t) = \sum_\mathcal{B}(-1)^{|\mathcal{B}|} t^{n-|\mathcal{B}|}$, where $\mathcal{B}$ ranges over all subsets of hyperplanes in $\mathcal{A}$ with linearly independent normals.
Applying these two results to an arrangement $\mathcal{A}$ in $\mathbb{R}^{n}$ consisting of $m$ sets of $k_i-1$ parallel hyperplanes, $i=1,\ldots, m$, with hyperplanes in different sets being in general position,
we obtain
$\chi_\mathcal{A}(t) = \sum_{j=0}^{n} (\sum_{\subsmash{S\in {[m]\choose j}}} \prod_{i\in S}(k_i-1))(-1)^j t^{n-j}$ and
$r(\mathcal{A})= (-1)^n\chi_\mathcal{A}(-1)
= \sum_{j=0}^{n} (\sum_{S\in {[m]\choose j}} \prod_{i\in S}(k_i-1))$.
\end{proof}

A similar argument can be used to obtain the following lower bound for the maximum number of regions for functions represented by maxout networks without biases. In Theorem~\ref{thm:main-result} we will show that this lower bound is also optimal.
\begin{proposition}[Lower bound for shallow maxout networks without biases]
\label{prop:singleLayerLowerBoundNoBias}
For a network $\mathcal{N}$ with $n$ inputs and a single layer of maxout units of ranks $k_1,\ldots, k_m$ and no biases, $N(\mathcal{N})\geq {m-1\choose n-1} + \sum_{j=0}^{n-1} \sum_{S\in{[m]\choose j}}\prod_{i\in S}(k_i-1)$.
\end{proposition}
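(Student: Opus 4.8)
The plan is to realize the claimed count by homogenizing the optimal construction for layers \emph{with} biases from Proposition~\ref{prop:singleLayerLowerBound}. Write $\RR^n = \RR^{n-1}\times\RR$ with coordinates $(x,\tilde x)$. I would start from the layer on $\RR^{n-1}$ of $m$ maxout units of ranks $k_1,\dots,k_m$ whose $i$th unit has preactivation features $z_{i,j}(x) = j\langle a_i,x\rangle + jb_i - j^2 c_i$, $j=0,\dots,k_i-1$, with $c_i>0$ and with $a_i,b_i$ generic; since $j\mapsto jb_i-j^2c_i$ is strictly concave, $\max_j z_{i,j}$ has nonlinear locus equal to $k_i-1$ parallel hyperplanes, so by Proposition~\ref{prop:singleLayerLowerBound} the function $\bar g(x)=\sum_i\max_j z_{i,j}(x)$ has $\sum_{j=0}^{n-1}\sum_{S\in\binom{[m]}{j}}\prod_{i\in S}(k_i-1)$ regions. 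Homogenizing the constant terms gives bias-free features $\tilde z_{i,j}(x,\tilde x)=j\langle a_i,x\rangle + (jb_i-j^2c_i)\tilde x$ on $\RR^n$; let $\bar f=\sum_i\max_j\tilde z_{i,j}$, a function represented by a bias-free layer of the prescribed ranks, so $N(\mathcal N)\ge N(\bar f)$. Note $\bar f$ is positively homogeneous, so each of its linear regions is an open polyhedral cone, and $\bar f|_{\{\tilde x=1\}}=\bar g$.

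Next I would count the regions of $\bar f$ by the sign of $\tilde x$ on them. Being a full-dimensional open cone, each region $R$ is of exactly one type: $R\subseteq\{\tilde x>0\}$, or $R\subseteq\{\tilde x<0\}$, or $R$ meets both open halfspaces; denote the corresponding counts $\alpha,\beta,\gamma$, so $N(\bar f)=\alpha+\beta+\gamma$. Restricting to the affine hyperplane $\{\tilde x=1\}$, the map $R\mapsto R\cap\{\tilde x=1\}$ is a bijection from the regions of $\bar f$ meeting $\{\tilde x>0\}$ to the regions of $\bar f|_{\{\tilde x=1\}}=\bar g$, which gives $\alpha+\gamma=N(\bar g)$. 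Restricting instead to $\{\tilde x=-1\}$, the regions of $\bar f$ meeting $\{\tilde x<0\}$ biject with the regions of $\bar f|_{\{\tilde x=-1\}}$, and since each such $R$ is a cone, $R\subseteq\{\tilde x<0\}$ holds precisely when $R\cap\{\tilde x=-1\}$ is bounded; hence $\beta$ equals the number of \emph{bounded} regions of $\bar f|_{\{\tilde x=-1\}}$. Therefore $N(\bar f)=N(\bar g)+\#\{\text{bounded regions of }\bar f|_{\{\tilde x=-1\}}\}$.

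It remains to analyze $\bar f|_{\{\tilde x=-1\}}$. Its $i$th unit is $x\mapsto\max_{0\le j\le k_i-1}\big(c_i j^2+(\langle a_i,x\rangle-b_i)j\big)$; the expression in $j$ is now \emph{strictly convex} (as $c_i>0$), so its maximum over $\{0,\dots,k_i-1\}$ is attained at an endpoint and equals $\max\{0,\,(k_i-1)(\langle a_i,x\rangle-b_i+c_i(k_i-1))\}$. Thus, up to positive scalings that do not affect regions, $\bar f|_{\{\tilde x=-1\}}$ is a layer of $m$ rectified linear units whose nonlinear locus is the arrangement of the $m$ hyperplanes $\{\langle a_i,x\rangle=b_i-c_i(k_i-1)\}$ in $\RR^{n-1}$, which by the genericity choices is in general position. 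By Buck's formula~\cite{10.2307/2303424} for bounded regions of hyperplane arrangements, this arrangement has $\binom{m-1}{n-1}$ bounded regions. Combining, $N(\mathcal N)\ge N(\bar f)=\sum_{j=0}^{n-1}\sum_{S\in\binom{[m]}{j}}\prod_{i\in S}(k_i-1)+\binom{m-1}{n-1}$, the asserted bound.

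The hard part will be to make the two bijections in the middle step rigorous. This amounts to checking that $V(\bar f)\cap\{\tilde x=\pm1\}=V(\bar f|_{\{\tilde x=\pm1\}})$ — i.e.\ that restricting $\bar f$ to these affine hyperplanes does not erase any sheet of its nonlinear locus (the only hyperplane along which this could happen is $\{\tilde x=0\}$, which is disjoint from $\{\tilde x=\pm1\}$) — and that a region of $\bar f$ lies in $\{\tilde x<0\}$ exactly when its slice at $\tilde x=-1$ is bounded; both follow from $\bar f$ being piecewise linear and conical but should be written out. One also has to confirm that the endpoint-collapse in the last step, together with the genericity built into the biased construction, genuinely yields $m$ hyperplanes in general position in $\RR^{n-1}$.
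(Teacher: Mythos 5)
Your proposal is correct and takes essentially the same route as the paper's own proof: homogenize the parallel-hyperplane construction of Proposition~\ref{prop:singleLayerLowerBound}, recover its region count on the slice $\{\tilde x=1\}$, and note that on $\{\tilde x=-1\}$ each unit degenerates to a single hyperplane, so the $\binom{m-1}{n-1}$ bounded regions of that generic arrangement account for the additional conical regions missed by the first slice. The only (harmless) overstatement is the asserted equivalence ``$R\subseteq\{\tilde x<0\}$ precisely when $R\cap\{\tilde x=-1\}$ is bounded'': in general only the implication from a bounded slice to $R\subseteq\{\tilde x<0\}$ holds, but that is the direction the lower bound requires.
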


\begin{proof}
Consider $\mathbb{R}^n$ and the hyperplane $H = \{x'\in\mathbb{R}^n\colon x'_n=1\}$.
Any linear function $x'\mapsto \langle w', x' \rangle$ on $\mathbb{R}^n$ takes over $H$ the form $\langle  w , x \rangle + b $, where $w' = (w,b)$ and $x'=(x,1)$.
Thus, setting the weights of our layer without biases as $w_{i}'=(w_{i},b_{i})\in\mathbb{R}^n$,
with $w_i\in\mathbb{R}^{n-1}$ the weights
and $b_i\in\mathbb{R}$ the biases of Proposition~\ref{prop:singleLayerLowerBound} for $n-1$ inputs, where $i$ runs over all preactivation features of all units, we obtain a function whose restriction to $H$ has $\sum_{j=0}^{n-1}\sum_{S\in{[m]\choose j}}\prod_{i\in S}(k_i-1)$ linear regions. %
Now we argue that this function can be constructed so that it has ${m-1\choose n-1}$ additional linear regions that are not intersected by $H$.
By our construction, over $H$ the nonlinear locus of the $j$th unit $\max\{\langle w_{j,1},x\rangle +b_{j,1},\ldots, \langle w_{j,k_j},x\rangle +b_{j,k_j}\}$ consists of $k_j-1$ parallel hyperplanes, meaning that all $w_{j,1},\ldots, w_{j,k_j}$ are equal to some fixed $w_j$ up to scaling.
Consider the hyperplane $G=\{x'\in\mathbb{R}^n\colon x'_n=-1\}$. Over $G$, any of the maxout units takes the form $\max\{\langle \alpha_1 w_{j},x\rangle -b_{j,1} ,\ldots, \langle \alpha_{k_j} w_{j},x\rangle -b_{j,k_j}\}$ and its nonlinear locus includes (actually it consists precisely of) one hyperplane $\{x\in \mathbb{R}^{n-1}\colon (\alpha_r-\alpha_s)\langle w_j ,x\rangle - (b_{j,r}-b_{j,s})\}$.
Hence, over $G$ our layer 
has linear regions determined by $m$ affine hyperplanes. Now, an arrangement of $m$ hyperplanes in general position in ${n-1}$ dimensions has ${m-1\choose n-1}$ relatively bounded regions.
None of these intersect $H$.
\end{proof}

Our main result determines the maximal number of linear regions for shallow maxout networks with and without bias, for any input dimension, any number of maxout units, and any ranks.
It shows that the lower bounds in Propositions~\ref{prop:singleLayerLowerBound} and~\ref{prop:singleLayerLowerBoundNoBias} are sharp.

\begin{theorem}[Optimal bound for shallow maxout networks]
\label{thm:main-result}
For a shallow network $\mathcal{N}$ with $n$ inputs and a layer of $m$ maxout units of ranks $k_1,\ldots, k_m$, we have
\begin{align*}
N(\mathcal{N}) &= \sum_{j=0}^{n}  \sum_{S\in {[m]\choose j}} \prod_{i\in S}(k_i-1),\\
\intertext{where ${[m]\choose j}$ is the set of subsets of $[m]=\{1,\ldots, m\}$ with cardinality $j$.
Here ${[m]\choose 0} = \{\emptyset\}$, empty sums are $0$, and empty products are $1$.
In particular, if $k_1=\cdots=k_m =k$, we have $N(\mathcal{N}) = \sum_{j=0}^{n}{m\choose j}(k-1)^j$.
Moreover, if $\mathcal N$ is without biases, then}
N(\mathcal{N}) &= {m'-1\choose n-1} + \sum_{j=0}^{n-1}  \sum_{S\in {[m]\choose j}} \prod_{i\in S}(k_i-1),
\end{align*}
where $m'$ is the number of maxout units of rank larger than $1$.
In particular, if $k_1=\cdots=k_m =k>1$, we have $N(\mathcal{N}) = {m-1\choose n-1}+\sum_{j=0}^{n-1}{m\choose j}(k-1)^j$.
\end{theorem}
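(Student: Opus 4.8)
The plan is to prove matching upper bounds for the two formulas, since the lower bounds are already established in Propositions~\ref{prop:singleLayerLowerBound} and~\ref{prop:singleLayerLowerBoundNoBias}. By Proposition~\ref{prop:uppervertreg}, for a layer with biases the number of linear regions is the number of upper vertices of a Minkowski sum $P = P_1 + \cdots + P_m \subseteq \mathbb{R}^{n+1}$, where $P_i$ is the convex hull of $k_i$ points; without biases it is the number of (all) vertices of a Minkowski sum in $\mathbb{R}^n$. So everything reduces to bounding vertices of Minkowski sums of polytopes that are convex hulls of prescribed numbers of points, and exhibiting configurations that attain the bound. The hypotheses of the lower-bound propositions already give such configurations (each $P_i$ a segment-like polytope whose vertices lie on a line, i.e.\ the Newton polytopes of units with $k_i-1$ parallel nonlinear hyperplanes become dilated segments with $k_i-1$ ``active'' directions); the remaining work is the upper bound.

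For the upper bound I would combine the two strands developed in Sections~\ref{sec:Weibel}--\ref{sec:Weibel-Zaslavsky}. First, by Proposition~\ref{thm:Mneighborly_upper_bound} (the consequence of Adiprasito--Sanyal), a maximizing Minkowski sum has every partial subsum of at most $n$ polytopes attaining the trivial vertex count $\prod_{i\in S} f_0(P_i)$; this lets me replace each $P_i$ by a simplex on $f_0(P_i)$ vertices, or even further reduce to the case where each $P_i$ has exactly $k_i$ vertices in ``Minkowski-general position.'' Second, I would invoke the generalization of Weibel's formula to arbitrary dimensions, Theorem~\ref{thm:central-arbdims}, together with Theorem~\ref{thm:facessimple} for the upper-vertex count: these express the number of (upper) vertices of the big sum in terms of the numbers of (upper) vertices of small subsums of size at most $n$. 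Plugging in $f_0(\sum_{i\in S} P_i) = \prod_{i\in S} k_i$ for $|S|\le n$ and reindexing should collapse the alternating-sign bookkeeping into $\sum_{j=0}^n \sum_{S\in\binom{[m]}{j}} \prod_{i\in S}(k_i-1)$. The shift from $k_i$ to $k_i-1$ is exactly the inclusion--exclusion identity $\prod_{i\in S}((k_i-1)+1) = \sum_{R\subseteq S}\prod_{i\in R}(k_i-1)$, which I expect to see emerge when Weibel-type formulas (which count \emph{new} vertices of each subsum) are summed over all subsums.

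For the no-bias case the same machinery applies in $\mathbb{R}^n$ rather than $\mathbb{R}^{n+1}$, but now I count all vertices rather than upper vertices, and I must account for the fact that rank-$1$ units (single points, $k_i=1$) contribute a trivial translation: only the $m'$ units of rank $>1$ matter, and the extra $\binom{m'-1}{n-1}$ term is the count of bounded regions of an arrangement of $m'$ hyperplanes in $\mathbb{R}^{n-1}$ (equivalently, Buck's formula), appearing because the sum of $m'$ genuine segments in general position in $\mathbb{R}^n$ is a zonotope-like polytope whose total vertex count exceeds its upper-vertex count by precisely that many. I would either derive this directly from Theorem~\ref{thm:central-arbdims} specialized to segments, or pass between the two cases via the cone construction already used in the proof of Proposition~\ref{prop:singleLayerLowerBoundNoBias} (intersecting the normal fan in $\mathbb{R}^{n+1}$ with $\mathbb{R}^n\times\{1\}$ versus taking the full fan).

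The main obstacle is the upper bound, specifically showing that the maximizer simultaneously makes \emph{all} relevant subsums attain their trivial vertex counts and that no other configuration does better for the upper-vertex (biased) count. Proposition~\ref{thm:Mneighborly_upper_bound} handles subsums of size $\le n$, but the Weibel-type formula also references how these assemble, and I must check that the explicit segment construction of Proposition~\ref{prop:singleLayerLowerBound} is globally optimal, not just locally. I expect this to follow because Theorem~\ref{thm:facessimple} will show the upper-vertex count is a monotone function of the subsum vertex counts, so maximizing each subsum maximizes the total — but verifying the monotonicity and that the alternating sums have the right sign is the delicate combinatorial heart of the argument, and is presumably exactly what Sections~\ref{sec:Weibel}--\ref{sec:Weibel-Zaslavsky} are built to supply.
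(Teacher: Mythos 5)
Your overall architecture matches the paper's: lower bounds from Propositions~\ref{prop:singleLayerLowerBound} and~\ref{prop:singleLayerLowerBoundNoBias}, reduction to (upper) vertices of Minkowski sums via Proposition~\ref{prop:uppervertreg}, the Adiprasito--Sanyal consequence Proposition~\ref{thm:Mneighborly_upper_bound}, the Weibel-type counting formulas, and Lemma~\ref{lemma:reformulation} to collapse the alternating sum. For the case without biases (and $m\geq n+1$; the case $m\leq n$ is handled separately by the trivial bound of Proposition~\ref{prop:simple-upper-bound}) your route is essentially the paper's: at a configuration maximizing the total vertex count, every subsum with $|S|\leq n$ attains $f_0(P_S)=\prod_{i\in S}k_i$ \emph{exactly}, not merely as an upper bound, so Theorem~\ref{thm:central-arbdims} evaluates to the closed form and the sign problem never arises.

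The gap is in the case with biases. Theorem~\ref{thm:facessimple} expresses $f_0(P^+)$ in terms of the upper-vertex counts $f_0(P_S^+)$ of the subsums, but Proposition~\ref{thm:Mneighborly_upper_bound} only controls the \emph{total} vertex counts $f_0(P_S)$ of subsums of a sum maximizing the total $f_0(P)$; it says nothing about $f_0(P_S^+)$, nor about sums maximizing $f_0(P^+)$. Your fallback --- that the formula of Theorem~\ref{thm:facessimple} is monotone in the subsum counts --- fails: the coefficients $(-1)^{n-j}\binom{m-1-j}{n-j}$ alternate in sign, so substituting upper bounds term by term does not yield an upper bound, and no such monotonicity is established in the paper. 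What the paper uses instead, and what your proposal omits entirely, is the bridge supplied by Theorem~\ref{thm:lowerbound_strict_lower}: every simple central arrangement has at least $\binom{m-1}{n}$ regions missing a generic affine hyperplane, i.e.\ every Minkowski sum has at least $\binom{m-1}{n}$ strict lower vertices. Writing the number of upper vertices as the total number of vertices minus the number of strict lower vertices, and bounding the total via Proposition~\ref{thm:Mneighborly_upper_bound} together with Theorem~\ref{thm:central-arbdims}, the two occurrences of $\binom{m-1}{n}$ cancel and Lemma~\ref{lemma:reformulation} yields the stated formula. Without this step (or an equivalent device, such as the support decomposition in the proof of Theorem~\ref{thm:lowerbound_strict_lower} that rewrites the alternating sum as a sum of non-negative terms), your argument for the biased case does not close.
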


The proof relies on several results that will be developed in Sections~\ref{sec:Weibel}, \ref{sec:Zaslavsky}, and \ref{sec:Weibel-Zaslavsky}.
The most difficult part %
is the upper bound for the case with biases and many units ($m\geq n+1$)  possibly having small ranks (allowed to be smaller than $n+2$), which also happens to be the case of highest practical interest.
The main ideas are as follows.

It is not difficult to adapt a result by Weibel to count upper faces of Minkowski sums (Theorem~\ref{thm:f_vectors_upper_part_Minkowski_sum}).
This provides us with a formula for the number of linear regions (and other lower-dimensional features) for shallow maxout networks. However, the formula consists of an alternating sum over sub-arrangements whose maximum value is difficult to determine when some of the polytopes are not full dimensional.

Studying whole polytopes instead of their upper faces allows us to leverage Adiprasito-Sanyal's Upper Bound Theorem for Minkowski Sums and its implications for small Minkwoski subsums (Proposition~\ref{thm:Mneighborly_upper_bound}).
To obtain an explicit formula for the maximum number of overall vertices, we generalize Weibel's formula for vertices to encompass possibly lower-dimensional polytopes (Theorem~\ref{thm:central-arbdims}) and upper vertices (Theorem~\ref{thm:facessimple}). To this end we formulate a maxout version of Zaslavsky's theorem (Theorem~\ref{thm:posetcounting}).
In order to differentiate between all vertices and upper vertices, we study bounded regions of maxout arrangements (Theorem~\ref{thm:lowerbound_strict_lower}).
In summary, the key results towards proving Theorem~\ref{thm:main-result} are the following:
\setlist[description]{font=\normalfont}
\begin{description}[leftmargin=3mm]
\item[Propositions~\ref{prop:singleLayerLowerBound} and \ref{prop:singleLayerLowerBoundNoBias}:] constructive lower bounds for the maximum number of linear regions of shallow maxout networks with and without biases, improving a previous construction by Mont\'ufar et al.~\cite{NIPS2014_5422}.
  \item[Proposition~\ref{thm:Mneighborly_upper_bound}:] a consequence of the Upper Bound Theorem for Minkowski sums by Adiprasito-Sanyal~\cite{KarimRaman} to the number of vertices of small Minkowski subsums of polytopes attaining said upper bound.
  \item[Theorem~\ref{thm:posetcounting}:] a maxout version of Zaslavsky's theorem for hyperplane arrangements~\cite{zaslavsky1975facing}, which expresses the number of regions of a maxout arrangement in terms of the Euler characteristic and the M\"obius function on the intersection poset.
  \item[Theorems~\ref{thm:facessimple} and~\ref{thm:central-arbdims}:] a generalization of Weibel's counting formula for Minkowski sums~\cite{Weibel12}, which expresses the number of regions of simple non-central and central maxout arrangements in terms of the regions of small subarrangements (Minkowski subsums).
  \item[Theorem~\ref{thm:lowerbound_strict_lower}:] a lower bound on the number of bounded regions of a maxout arrangement (strict lower vertices of a Minkowski sum). This allows us to upper bound the number of upper vertices  of a Minkowski sum, given an upper bound on the total number of vertices.
\end{description}

\begin{proof}[Proof of Theorem~\ref{thm:main-result}]
For networks with biases, if $m\leq n$, we can apply the trivial upper bound given in Proposition~\ref{prop:simple-upper-bound}.
For $m \geq n+1$, the upper bound follows from the upper bound theorem for Minkowski sums of polytopes  Theorem~\ref{thm:Mneighborly_upper_bound} together with the lower bound on the number of strict upper or lower faces given in  Theorem~\ref{thm:lowerbound_strict_lower} inserted into the counting formula given in Theorem~\ref{thm:facessimple} and reformulated via Lemma~\ref{lemma:reformulation}.
The construction attaining the maximum is given in Proposition~\ref{prop:singleLayerLowerBound}.
For networks without biases, if $m\leq n$,
we can apply the trivial upper bound given in Proposition~\ref{prop:simple-upper-bound}.
For $m\geq n+1$, the upper bound follows from
Theorem~\ref{thm:Mneighborly_upper_bound} inserted into the counting formula given in Theorem~\ref{thm:central-arbdims} and reformulated via Lemma~\ref{lemma:reformulation}.
The construction attaining the maximum is given in Proposition~\ref{prop:singleLayerLowerBoundNoBias}.
\end{proof}

We illustrate Theorem~\ref{thm:main-result} on a few examples.

\begin{example}\
\begin{enumerate}[leftmargin=*]
\item
In the case of a single input, $n=1$, networks with biases represent functions on the real line which have at most $\sum_{i=1}^m (k_i-1)$ break points %
and a maximum of $1 + \sum_{i=1}^m (k_i-1)$ linear regions.
Networks without biases and at least one unit of rank $\geq2$ represent functions which have at most $1$ break point and a maximum of %
$2$ linear regions.

\item
In the case of few units, $m\leq n$, networks with and without biases both have the optimal bound $\prod_{i=1}^m k_i$.
To see this, use the multi-binomial theorem to evaluate the formula in our theorem.
This matches the simple upper bound given in Proposition~\ref{prop:simple-upper-bound}.

\item
In the case of many units, $m\geq n$, the maximum number of regions is no longer exponential in $m$, but only polynomial.
For $k_1=\cdots=k_m=k$, the order in $m$ and $k$ is $\Theta((mk)^n)$ and $\Theta((mk)^{n-1})$ in the cases with and without biases.
This should be compared with the previous bounds $\Omega(k^{n})$ and $O((m k^2)^n)$ from Proposition~\ref{proposition:previous-bounds} for the case with biases.

\item
In the case $k_1=\cdots=k_m=2$,
we recover the well-known formulas for the maximum number of regions of hyperplane arrangements, $\sum_{j=0}^n{m\choose j}$,
and central hyperplane arrangements,
${m-1\choose n-1}+\sum_{j=0}^{n-1}{m\choose j}
= 2\sum_{j=0}^{n-1}{m-1\choose j}$.
These are also the optimal bounds for shallow ReLU networks with and without biases.
\end{enumerate}
\end{example}

We finish this part with two %
corollaries.
The first gives a lower bound on the number of bounded regions.

\begin{corollary}[Lower bound on the number of bounded regions]
\label{cor:lowervert}
For a network with $n$ inputs, a layer of $m$ maxout units of ranks at least $2$, and generic parameters,
the number of bounded regions is at least ${m-1\choose n}$.
For networks without biases all regions are unbounded. %
\end{corollary}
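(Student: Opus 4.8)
The plan is to derive both parts of Corollary~\ref{cor:lovert} from the structure already assembled in the proof of Theorem~\ref{thm:main-result}, together with the correspondence of Proposition~\ref{prop:uppervertreg} between bounded regions of the maxout arrangement and \emph{strict lower} vertices of the lifted Newton polytope $P=P_1+\cdots+P_m\subseteq\mathbb{R}^{n+1}$. First I would observe that, for generic parameters, the construction in Proposition~\ref{prop:singleLayerLowerBound} not only attains the maximal total number of regions but in fact realizes the nonlinear locus of each unit as $k_i-1\ge 1$ parallel hyperplanes with normals of distinct units in general position; in particular it produces \emph{at least one} affine hyperplane per unit, so the combined arrangement refines an arrangement of $m$ hyperplanes in general position in $\mathbb{R}^n$. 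Buck's formula (cited in the introduction) gives ${m-1\choose n}$ bounded regions for such an arrangement, and every bounded region of the coarser hyperplane arrangement contains at least one bounded region of the finer maxout arrangement. This already yields the lower bound ${m-1\choose n}$ for this particular construction, hence for the maximum; but since the corollary asserts the bound holds for \emph{any} generic parameter choice, I would instead argue via the polytope side.

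The cleaner route is to invoke Theorem~\ref{thm:lowerbound_strict_lower}, which (as flagged in the list of key results) gives a lower bound on the number of strict lower vertices of a Minkowski sum of polytopes in $\mathbb{R}^{n+1}$ whose summands all have at least two vertices. Strict lower vertices of $P_{\bar f}$ are exactly the vertices whose normal cone meets the open lower halfspace but not the closed upper one, and by the duality of Proposition~\ref{prop:ftt} these correspond to linear regions of $\bar f$ that are \emph{not unbounded in the upward direction of the lift} — which, after intersecting the normal fan with the affine slice $\mathbb{R}^n\times\{1\}$ as described just before Proposition~\ref{thm:Mneighborly_upper_bound}, are precisely the bounded linear regions of $f$. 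So the claimed bound ${m-1\choose n}$ for bounded regions is literally the statement of Theorem~\ref{thm:lowerbound_strict_lower} specialized to $m$ polytopes each being the convex hull of $k_i\ge 2$ points, transported through Propositions~\ref{prop:ftt} and~\ref{prop:uppervertreg}. The one point requiring care is genericity: Theorem~\ref{thm:lowerbound_strict_lower} is presumably stated for polytopes in general orientation, and by the proposition on generic perturbations of parameters, a generic parameter $\theta$ yields summands $P_i$ in general orientation, so the hypothesis is met; I would spell out this reduction explicitly.

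For the second sentence of the corollary, I would note that a network without biases has, by Proposition~\ref{prop:uppervertreg}, its regions in bijection with \emph{all} vertices of a Minkowski sum of polytopes living in $\mathbb{R}^n\times\{0\}$; equivalently, by the remark following Proposition~\ref{prop:uppervertreg}, the nonlinear locus is the normal fan of the Newton polytope, a \emph{central} fan. Every cone of a central polyhedral fan in $\mathbb{R}^n$ is unbounded (it is invariant under no translation but is a cone, hence contains a ray), so every linear region is unbounded; this is the same phenomenon as a central hyperplane arrangement having no bounded regions. This part is immediate and needs only a one-line justification.

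The main obstacle is purely bookkeeping: making sure the notion of ``bounded region of $f$'' on the arrangement side is matched to exactly the right class of vertices of $P_{\bar f}$ — strict lower vertices rather than all lower vertices or all non-upper vertices — and that the intersection with the affine slice $\mathbb{R}^n\times\{1\}$ does not change the count. Once that dictionary is pinned down, the corollary is a direct specialization of Theorem~\ref{thm:lowerbound_strict_lower}, and I do not expect any genuinely new estimate to be needed.
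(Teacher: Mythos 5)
Your proposal matches the paper's proof, which consists of a single deferral to Theorem~\ref{thm:lowerbound_strict_lower}; the dictionary you spell out (bounded regions of the arrangement $\leftrightarrow$ vertices of $P_{\bar f}$ whose normal cone lies strictly on one side of $\mathbb{R}^n\times\{0\}$, and all regions of the central fan being unbounded cones in the bias-free case) is exactly what that one-line proof leaves implicit. The only caution is a sign convention: the bounded regions of $f$ correspond to the \emph{strictly upper} vertices, i.e.\ to regions of the central arrangement in $\mathbb{R}^{n+1}$ not meeting the slice $\mathbb{R}^n\times\{-1\}$ rather than the strict lower ones, but since Theorem~\ref{thm:lowerbound_strict_lower} applies to an arbitrary affine hyperplane $g$ avoiding the origin this discrepancy is harmless.
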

\begin{proof}
This follows from Theorem~\ref{thm:lowerbound_strict_lower}.
\end{proof}

By Corollary~\ref{cor:lowervert}, generic maxout arrangements with rank at least $2$ have at least as many bounded regions as generic hyperplane arrangements, which have ${m-1\choose n}$ bounded regions.
This observation is non-trivial, %
since the polyhedral pieces of %
tropical hypersurfaces do not necessarily all intersect each other.
It is easy to draw examples in $\mathbb{R}^2$ showing that, in contrast to hyperplane arrangements, maxout arrangements do not have a single generic number of bounded regions.
Lower bounds for generic parameters are rare in the literature.
A result of this kind is \cite[Corollary 8.2]{Adiprasito2017} (supplement to \cite{KarimRaman}), which shows that a Minkowski sum of $m$ polytopes in general position has at least as many vertices as a sum of $m$ line segments.

The following is a simple corollary for the number of regions over an affine subspace of the input space, which we will use in the next part on deep networks.

\begin{corollary}[Number of regions over an affine subspace]
\label{cor:convexRegionsOfRestriction}
Consider a network $\mathcal{N}$ with $n$ inputs and a layer of $m$ maxout units. Let $A$ be an affine $n_0$-space, $n_0\leq n$.
Then $N (\mathcal{N}|_A) = \sum_{j=0}^{n_0}\sum_{S\in{[m]\choose j}}\prod_{i\in S}(k_i-1)$.
Similarly, for a network $\mathcal{N}$ without biases and $A$ a linear $n_0$-space, $n_0\leq n$, %
$N(\mathcal{N}|_A) = {m-1\choose n_0-1} +  \sum_{j=0}^{n_0-1}\sum_{S\in{[m]\choose j}}\prod_{i\in S}(k_i-1)$.
\end{corollary}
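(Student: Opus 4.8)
The plan is to deduce Corollary~\ref{cor:convexRegionsOfRestriction} directly from Theorem~\ref{thm:main-result} by observing that restricting the network to an affine (resp.\ linear) subspace $A$ produces a new shallow maxout network of the same width and with the same ranks, just on a lower-dimensional input space. Concretely, parametrize $A$ by an affine isomorphism $\iota\colon\mathbb{R}^{n_0}\to A\subseteq\mathbb{R}^n$; then for any $f_\theta\in\mathcal{N}$ the composition $f_\theta\circ\iota$ is computed by a shallow maxout network $\mathcal{N}'$ with $n_0$ inputs, $m$ units, and ranks $k_1,\ldots,k_m$, since precomposing each preactivation feature $x\mapsto\langle A_{j,r},x\rangle+b_{j,r}$ with an affine map is again an affine (resp.\ linear) function of the new coordinates. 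Hence $N(\mathcal{N}|_A)\le N(\mathcal{N}')$, and Theorem~\ref{thm:main-result} evaluates $N(\mathcal{N}')$ to exactly the claimed expression with $n$ replaced by $n_0$. (In the unbiased case, $\iota$ must be taken linear so that linear features stay linear; note that if all $m$ units have rank $\ge 2$ then $m'=m$, which is the generic situation, and otherwise one replaces $m$ by $m'$ in the first binomial term just as in the theorem — I would state the corollary under the running assumption that ranks are $\ge 2$, matching the surrounding text.)

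For the reverse inequality I need a matching lower bound: there exists an $f_\theta\in\mathcal{N}$ whose restriction to $A$ has the full number of regions. This follows because the constructions realizing the lower bounds — Proposition~\ref{prop:singleLayerLowerBound} for the biased case and Proposition~\ref{prop:singleLayerLowerBoundNoBias} for the unbiased case — are built on $\mathbb{R}^{n_0}$ and can be transported to $A$ via $\iota^{-1}$, then extended to all of $\mathbb{R}^n$ by choosing the remaining weight components generically (e.g.\ zero on a complement of the direction space of $A$, or a small generic perturbation thereof). Each linear region of the restricted function $f_\theta\circ\iota$ on $\mathbb{R}^{n_0}$ is the trace of a linear region of $f_\theta$ on $A$, so $N(\mathcal{N}|_A)\ge N(\mathcal{N}')= \sum_{j=0}^{n_0}\sum_{S\in\binom{[m]}{j}}\prod_{i\in S}(k_i-1)$ in the biased case, and analogously $\ge \binom{m-1}{n_0-1}+\sum_{j=0}^{n_0-1}\sum_{S\in\binom{[m]}{j}}\prod_{i\in S}(k_i-1)$ in the unbiased case. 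Combining with the upper bound from the previous paragraph gives equality.

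The only genuine subtlety — and the step I would be most careful about — is making precise that restriction to $A$ cannot \emph{create} extra regions beyond those counted by $N(\mathcal{N}')$: a priori the nonlinear locus $V(f_\theta)$ could slice $A$ in a way not captured by the restricted network's own nonlinear locus. But this is a non-issue once we observe that $V(f_\theta\circ\iota)=\iota^{-1}(V(f_\theta)\cap A)$, since $\nabla(f_\theta\circ\iota)$ is discontinuous exactly where $\nabla f_\theta$ is discontinuous along $A$; thus the linear regions of $f_\theta|_A$ are in bijection with the linear regions of the honest shallow network $f_\theta\circ\iota$ on $\mathbb{R}^{n_0}$, and Theorem~\ref{thm:main-result} applies verbatim. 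So the proof is essentially a one-line reduction:
\begin{equation*}
N(\mathcal{N}|_A)=\max_{\theta}N(f_\theta\circ\iota)=N(\mathcal{N}')=\sum_{j=0}^{n_0}\sum_{S\in\binom{[m]}{j}}\prod_{i\in S}(k_i-1),
\end{equation*}
with the analogous identity (and linear $\iota$) in the no-bias case. I would spell out the parametrization $\iota$ and the identity $V(f_\theta\circ\iota)=\iota^{-1}(V(f_\theta)\cap A)$ explicitly, then simply invoke Theorem~\ref{thm:main-result}.
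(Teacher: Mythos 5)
Your proposal is correct and is exactly the paper's argument: the paper's entire proof is the one-line observation that the functions represented by $\mathcal{N}$ on $A$ can be written as a layer with $n_0$ inputs, which is precisely your reduction via $\iota$ followed by an appeal to Theorem~\ref{thm:main-result}. You simply spell out the details (the identity $V(f_\theta\circ\iota)=\iota^{-1}(V(f_\theta)\cap A)$ and the attainment of the maximum) that the paper leaves implicit, and your side remark about $m'$ versus $m$ in the unbiased case is a fair observation about the corollary's statement.
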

\begin{proof}
The functions represented by $\mathcal{N}$ on $A$ can be written as a layer with $n_0$ inputs.
\end{proof}

\subsection{Deep networks}

In this subsection we derive consequences of our analysis of shallow networks for deep networks.
For deep maxout networks, \cite{NIPS2014_5422} obtained the following lower bound. The main point in that work was to show that the maximum number of linear regions is exponential in the depth of the network.
Upper bounds for deep networks can be obtained by multiplying upper bounds for individual layers, whereby the effective input dimension of each layer is bounded by the dimension of the image of the previous layers %
\cite{montufar2017notes}.
The following upper bound of this form was given in~\cite{DBLP:conf/icml/SerraTR18}, whereby we correct a minor typo (the sum runs up to $\min\{n_0,\ldots,n_{l-1}\}$ rather than $\min\{n_0,\ldots,n_{l}\}$).

\begin{proposition}[{\cite[Theorem~9]{NIPS2014_5422} and \cite[Theorem 10]{DBLP:conf/icml/SerraTR18}}]
For a rank-$k$ maxout network $\mathcal{N}$ with $n_0$ inputs and $L$ layers of width $n_0$, $N(\mathcal{N})\geq k^{L-1}k^{n_0}$.
For %
$L$ layers of widths $n_1,\ldots, n_L$, $N(\mathcal{N})\leq \prod_{l=1}^L(\sum_{j=0}^{e_l} {n_l k(k-1)/2\choose j })$, where $e_l=\min\{n_0,\ldots, n_{l-1}\}$.
\end{proposition}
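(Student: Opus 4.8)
The proposition recalls two results from the literature --- the lower bound of \cite{NIPS2014_5422} and the upper bound of \cite{DBLP:conf/icml/SerraTR18} (itself building on \cite{montufar2017notes}) --- so the plan is to indicate the arguments behind them rather than to give fresh proofs.

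For the \emph{lower bound} I would exhibit an explicit network of the prescribed architecture, following \cite[Theorem~9]{NIPS2014_5422}. Take the last layer to be a shallow rank-$k$ network with $n_0$ inputs and $n_0$ units attaining the trivial maximum $\prod_{i=1}^{n_0}k = k^{n_0}$ of linear regions; this value is realized by the construction of Proposition~\ref{prop:singleLayerLowerBound} with $n=m=n_0$ and all ranks equal to $k$ (equivalently, via the binomial identity $\sum_{j=0}^{n_0}\binom{n_0}{j}(k-1)^j = k^{n_0}$). Each of the $L-1$ preceding layers is then chosen so that the composed map $g_{L-1}=f_{L-1}\circ\cdots\circ f_1\colon\mathbb{R}^{n_0}\to\mathbb{R}^{n_0}$ identifies $k^{L-1}$ pairwise disjoint full-dimensional subregions with a common reference region on which the last layer already has $k^{n_0}$ regions, each identification being an affine homeomorphism; a rank-$k$ maxout layer of width $n_0$ has enough freedom to implement one such $k$-fold identification. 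Pulling back the $k^{n_0}$ regions of the last layer through $g_{L-1}$ then yields at least $k^{L-1}k^{n_0}$ linear regions of $f=f_L\circ g_{L-1}$.

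For the \emph{upper bound} I would use the layerwise product estimate of \cite{montufar2017notes,DBLP:conf/icml/SerraTR18}. Write $g_l = f_l\circ\cdots\circ f_1$. On each linear region $R$ of $g_{l-1}$ the restriction $g_{l-1}|_R$ is affine, and its image is an affine subspace of $\mathbb{R}^{n_{l-1}}$ of dimension at most $e_l := \min\{n_0,n_1,\ldots,n_{l-1}\}$, since the rank of a composition of affine maps through coordinate spaces of dimensions $n_0,\ldots,n_{l-1}$ cannot exceed their minimum. Restricting $f_l$ to this affine image and applying the single-layer bound of Proposition~\ref{proposition:previous-bounds} with $e_l$ inputs and $m=n_l$ rank-$k$ units, the function $f_l\circ g_{l-1}$ subdivides $R$ into at most $\sum_{j=0}^{e_l}\binom{n_l k(k-1)/2}{j}$ pieces. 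Multiplying these refinement factors over $l=1,\ldots,L$ (with $e_1=n_0$, so that the first factor is exactly the shallow bound) gives $N(\mathcal{N})\le\prod_{l=1}^{L}\bigl(\sum_{j=0}^{e_l}\binom{n_l k(k-1)/2}{j}\bigr)$.

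The step I would treat most carefully --- and the only genuinely subtle point --- is the effective-dimension estimate $e_l=\min\{n_0,\ldots,n_{l-1}\}$: one must verify that the dimension of the image of $g_{l-1}$ on each of its regions is controlled by the \emph{domains} $n_0,\ldots,n_{l-1}$ alone, so that $n_l$ does not enter, which is precisely the typo corrected here relative to \cite{DBLP:conf/icml/SerraTR18}. Everything else is routine: once the folding gadget is in place the lower bound is a direct composition count, and the upper bound is a telescoping product of the single-layer estimate already recorded in Proposition~\ref{proposition:previous-bounds}.
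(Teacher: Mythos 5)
First, a framing point: the paper does not prove this proposition at all --- it is quoted verbatim from \cite{NIPS2014_5422} and \cite{DBLP:conf/icml/SerraTR18} as background --- so your sketch should be judged against the cited arguments and against the paper's own refinements, Proposition~\ref{prop:deep-lower} and Theorem~\ref{thm:deep-result}. Your upper-bound argument is correct and is essentially the argument the paper itself uses for Theorem~\ref{thm:deep-result}: on each linear region the first $l-1$ layers restrict to a composition of affine maps through $\RR^{n_0},\dots,\RR^{n_{l-1}}$, so the image lies in an affine space of dimension at most $e_l=\min\{n_0,\dots,n_{l-1}\}$ (you correctly isolate this as the content of the typo correction), and one then applies the shallow bound with $e_l$ inputs and multiplies. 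The one technicality you gloss over is that the multiplicative recursion should be run on a \emph{convex} refinement of the linear regions (the paper's $N_c(\cdot\mid e)$), since a linear region of $g_{l-1}$ need not be convex; this is routine and handled in \cite[Theorem D.3]{pmlr-v80-zhang18i}.

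The gap is in the lower bound, at precisely the step you label routine. The claim that ``a rank-$k$ maxout layer of width $n_0$ has enough freedom to implement one such $k$-fold identification'' is the entire content of the theorem, and it cannot be waved through: a maxout unit is a \emph{convex} piecewise linear function, hence at most two-to-one along any line, so the coordinate-wise folding gadget familiar from the ReLU case is not available with one unit per coordinate. For $n_0=1$ the assertion is actually false for $k\geq 3$ --- a width-one layer is a single convex scalar map, and a direct breakpoint count shows a depth-$2$, width-$1$, rank-$3$ network has at most $7$ regions, below the claimed $k^{L}=9$ --- so the statement implicitly needs $n_0\geq 2$. For $n_0\geq 2$ a $k$-fold identification does exist, but the construction must use several units jointly: for instance, two units whose level sets are convex $k$-gons in a coordinate $2$-plane meeting transversally in at least $k$ points give a locally ($\geq k$)-to-one layer map, and the paper's own Proposition~\ref{prop:deep-lower} instead uses two units per folded coordinate so that their alternating sum is a zigzag (which is exactly why that proposition assumes $n\leq \tfrac12 n_l$). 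Without exhibiting such a gadget, the lower bound is asserted rather than proved.
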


We observe that there is a significant gap between the lower and upper bounds, of orders $\Omega(k^{L-1 + n_0 })$ and $O(\prod_{l=1}^L (n_l k^2)^{n_0})$ in %
$n_1,\ldots, n_L$ and %
$k$.
We can refine the approach from \cite{NIPS2014_5422} to obtain the following lower bound of order $\Omega(\prod_{l=1}^L(n_lk)^{n_0})$   %
in $n_1,\ldots, n_L$ and $k$.
This not only grows exponentially with the depth $L$, but also grows with the layer widths.

\begin{proposition}[Lower bound for deep maxout networks]
\label{prop:deep-lower}
Consider a network $\mathcal{N}$ with $n_0$ inputs and $L$ layers of $n_1,\ldots, n_L$ rank-$k$ maxout units.
Let $n\leq n_0, \frac12 n_1,\ldots, \frac12n_{L-1}$.
Assume $\frac{n_l}{n}$ is even (else take the largest even lower bound and discard the rest).
Then $N(\mathcal{N})\geq (\prod_{l=1}^{L-1} (\frac{n_l}{n} (k-1)+1)^n) (\sum_{j=0}^n{n_L\choose j}(k-1)^j)$.
For the same network but without biases, assuming $\frac{n_l-1}{n-1}$ is even, $N(\mathcal{N})\geq (\prod_{l=1}^{L-1} (\frac{n_l-1}{(n-1)} (k-1)+1)^{n-1}) (%
\sum_{j=0}^{n-1}{n_L\choose j}(k-1)^j)$.
\end{proposition}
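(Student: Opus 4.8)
The plan is to build the deep network layer by layer so that each of the first $L-1$ layers acts essentially as $n$ independent copies of a shallow layer in one-dimensional chunks, preserving and multiplying the number of linear regions, and then apply the shallow lower bound (Proposition~\ref{prop:singleLayerLowerBound}, or Corollary~\ref{cor:convexRegionsOfRestriction}) to the final layer. The key structural idea, following the construction in \cite{NIPS2014_5422}, is this: if a layer computes a map $g\colon\mathbb{R}^n\to\mathbb{R}^n$ that is ``foldable'', meaning there is a region-rich function $\phi\colon\mathbb{R}^n\to\mathbb{R}^n$ whose image is (a translate of) the whole relevant domain and which identifies many preimages, then composing with a subsequent network multiplies region counts. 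Concretely I would have each of the first $L-1$ layers, of width $n_l$, split its $n$-dimensional input into $n$ groups and use $n_l/n$ rank-$k$ maxout units per coordinate to implement a piecewise linear ``sawtooth'' map on that coordinate with $\frac{n_l}{n}(k-1)+1$ monotone linear pieces, each piece surjecting onto an interval; the product over the $n$ coordinates gives $\bigl(\frac{n_l}{n}(k-1)+1\bigr)^n$ identified copies of the domain. Since the ranks must be integers and the units partitioned evenly, this is where the hypotheses ``$n\le \tfrac12 n_1,\dots,\tfrac12 n_{L-1}$'' and ``$\frac{n_l}{n}$ even'' (and in the unbiased case $\frac{n_l-1}{n-1}$ even) are used — the factor of $2$ comes from needing at least two affine pieces going up and down to fold an interval onto itself, i.e.\ the sawtooth needs an even number of teeth.

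The steps, in order, would be: (1)~construct a single rank-$k$ maxout unit on $\mathbb{R}$ whose graph is a ``tent''/zigzag with $k-1$ breakpoints, and more generally show that stacking $\frac{n_l}{n}$ such units along one coordinate (summing them appropriately, or composing within the layer's affine freedom) yields a piecewise linear map $\mathbb{R}\to\mathbb{R}$ with $\frac{n_l}{n}(k-1)+1$ linear pieces, an even number of which fold a fixed interval $I$ surjectively onto itself when $\frac{n_l}{n}$ is even; (2)~take the $n$-fold product of these one-dimensional maps, embedded into the width-$n_l$ layer by using $n$ disjoint blocks of $\frac{n_l}{n}$ units, to get a map $\mathbb{R}^n\to\mathbb{R}^n$ under which the cube $I^n$ has $\bigl(\frac{n_l}{n}(k-1)+1\bigr)^n$ preimage-copies inside $I^n$ itself; (3)~iterate this for layers $l=1,\dots,L-1$, so that $I^n$ has $\prod_{l=1}^{L-1}\bigl(\frac{n_l}{n}(k-1)+1\bigr)^n$ identified copies after the first $L-1$ layers; (4)~on the last layer (width $n_L$), place a generic shallow rank-$k$ maxout layer restricted to an affine $n$-dimensional slice, which by Corollary~\ref{cor:convexRegionsOfRestriction} / Proposition~\ref{prop:singleLayerLowerBound} has $\sum_{j=0}^n\binom{n_L}{j}(k-1)^j$ linear regions inside that slice; (5)~conclude by the standard fact that distinct linear regions in each of the identified copies of the domain pull back to distinct linear regions of the composed function, giving the claimed product. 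For the unbiased version, replace the cube/intervals by the cone picture: work on a linear $(n{-}1)$-dimensional slice of directions as in the proof of Proposition~\ref{prop:singleLayerLowerBoundNoBias}, so the relevant chunks are $(n{-}1)$-dimensional and the folding count per coordinate becomes $\frac{n_l-1}{n-1}(k-1)+1$, with the last layer contributing $\sum_{j=0}^{n-1}\binom{n_L}{j}(k-1)^j$ via the restriction to a linear $n$-space.

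The main obstacle I expect is step~(2), verifying that the $n$ one-dimensional folding maps can genuinely be realized \emph{in parallel} inside a single width-$n_l$ maxout layer with the correct input wiring so that the composite really behaves coordinatewise — i.e.\ that a standard fully connected maxout layer (not a block-diagonal one) can be parametrized to implement a block-diagonal piecewise linear map whose image is an $n$-cube positioned so the next layer sees the same domain again. One must check that the affine pre-activations of later units ``see'' the output of the relevant block only, and that one can arrange the output cube to coincide with the input cube (up to an affine change of coordinates absorbed into the next layer's weights), so the folding composes cleanly; this is essentially the bookkeeping that makes the region counts multiply rather than merely add. A secondary technical point is ensuring that ``generic'' perturbations used to invoke the shallow lower bound on the last layer do not destroy the exact folding structure of the earlier layers — but since the earlier construction only needs the folding map to be surjective onto a cube with the stated number of monotone pieces, small perturbations are harmless, and one may first fix the deep folding layers and only then choose the final layer generically.
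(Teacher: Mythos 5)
Your proposal matches the paper's proof in all essentials: the first $L-1$ layers are partitioned into $n$ blocks of $\frac{n_l}{n}$ units whose alternating-sign sum realizes a coordinatewise sawtooth folding $[0,1]^n$ onto itself in a $(\frac{n_l}{n}(k-1)+1)^n$-to-one manner, the intermediate linear combination is absorbed into the next layer's weights, the last layer invokes Proposition~\ref{prop:singleLayerLowerBound} (resp.\ Corollary~\ref{cor:convexRegionsOfRestriction}), and the unbiased case restricts to the hyperplane $x_{n_0}=1$ exactly as in Proposition~\ref{prop:singleLayerLowerBoundNoBias}. The only imprecision is that a single rank-$k$ maxout unit is convex and so cannot itself be a tent or sawtooth; the zigzag arises only from the alternating-sign sum of the $\frac{n_l}{n}$ convex units in a block, which you do mention as the intended mechanism.
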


\begin{proof}
We follow the general arguments from \cite{NIPS2014_5422} but modify the construction of the weights %
to be similar to the one used in the same paper for ReLU layers.
The idea is to construct a many-to-one function, which allows us to multiply the regions across layers.

Consider first the case with biases. We consider the restriction of the network to inputs from a subspace of dimension $n$. Further, we insert a linear layer of output dimension $n$ after each layer of maxout units. In this way, the input dimension for each layer is $n$. This does not increase the representational power of the network, since a linear layer can be subsumed into the input weights and biases of the next layer, as $A_{i+1}(B_i f_i(x) + c_i)+b_{i+1} = (A_{i+1} B_i) f_i(x) + (A_{i+1}c_i +b_{i+1})$.
For each layer $l=1,\ldots,L-1$, we organize the $n_l$ units into $n$ groups of even size $\frac{n_l}{n}$.
By choosing the parameters of the $i$th group appropriately, we can achieve that their sum with alternating signs represents a zig-zag function over $\mathbb{R}^n$ with $\frac{n_l}{n}(k-1)$ breakpoints along the $i$th coordinate. To see this, note that along any given direction of its input, a maxout unit can represent any piecewise linear convex function with $k-1$ break points.
This way we can achieve that the $i$th layer maps $[0,1]^n$ to $[0,1]^n$ in a $(\frac{n_l}{n}(k-1)+1)^n$ to one manner. The function computed up to layer $L-1$ multiplies these multiplicities.
By Proposition~\ref{prop:singleLayerLowerBound}, the last layer can create $\sum_{j=0}^n{n_L\choose j}(k-1)^j$ regions over the an $n$-dimensional subspace of its input space, which, by appropriate scaling will intersect $[0,1]^n$. Each of these regions has multiplicity $\prod_{l=1}^{L-1}(\frac{n_l}{n}(k-1)+1)^n$ over the input space of the network, thus giving the indicated lower bound.

Consider now the case without biases. %
For each layer $l=1,\ldots,L$, we choose the weights of all preactivation features of the $n_{l}$th unit as the coordinate vector $e_{n_{l-1}}\in\mathbb{R}^{n_{l-1}}$, so that
$x^l_{n_l}=\max\{x^{l-1}_{n_{l-1}},\ldots, x^{l-1}_{n_{l-1}}\}=x^{l-1}_{n_{l-1}}$.
We consider the restriction of the network to the subset of inputs given by the hyperplane $H = \{x^0\in\mathbb{R}^{n_{0}} \colon x^{0}_{n_{0}}=1\}$. The number of linear regions of a function over this subset is a lower bound on its number of regions over the entire input space $\mathbb{R}^{n_0}$.
Notice that, given our choice of weights, over $H$ the last unit of each layer takes the fixed value $x^0_{n_0}=x^1_{n_1}=\cdots=x^L_{n_L}=1$.
As in Proposition~\ref{prop:singleLayerLowerBoundNoBias}, we choose the weights of the units $i=1,\ldots, n_l-1$ in layer $l$ as $w'_i = (w_i,b_i)\in\mathbb{R}^{n_{l-1}\times k}$, with $w_i\in\mathbb{R}^{(n_{l-1}-1)\times k}$ the weights and $b_i\in\mathbb{R}^{1\times k}$ the biases that are used above for a layer with biases and $n_{l-1}-1$ inputs.
Hence, over $H$ we obtain the same many-to-one maps as above, but now with the widths substituted to $n_0-1,\ldots, n_{L-1}-1$.
Finally, note that the last layer can in fact be chosen as in Proposition~\ref{prop:singleLayerLowerBoundNoBias} with $n$ inputs and $n_L$ outputs.
We take the bound $\sum_{j=0}^{n-1}{n_L\choose j}(k-1)^j$ for the number of regions intersecting $\{x^{L-1}\in\mathbb{R}^{n_{L-1}}\colon x^{L-1}_{n_{L-1}}=1\}$ and ignore other regions.
\end{proof}
Proposition~\ref{prop:deep-lower} is given for networks where all units have the same rank $k$, but it is straightforward to formulate corresponding results for networks with units of different ranks.
Also, it is not difficult to obtain minor improvements if instead of discarding units one keeps them with small weights, without altering the general construction. However, as we will see below, the asymptotic is already tight.

We now derive upper bounds the number of linear regions $N(\Phi)$ of a
function $\Phi:\RR^{n_0}\to\dots\to\RR^{n_L}$ represented by a deep neural network.
Notice that each linear region of the function computed up to the $(l-1)$th layer is split by the $l$th layer into at most %
the number of linear regions of a shallow network with $n_{l-1}$ inputs and $n_l$ outputs.
As pointed out in \cite{montufar2017notes}, the linear output pieces of the $(l-1)$th layer have dimension bounded above by $\min\{n_0,\ldots, n_{l-1}\}$, which allows us to slightly improve the bound based on Corollary~\ref{cor:convexRegionsOfRestriction}.
Similar discussions have also appeared in \cite{DBLP:conf/icml/SerraTR18} and \cite[Theorem 6.3]{pmlr-v80-zhang18i}.
We obtain the following upper bound for deep networks. When all units have rank $k$, the bound is of order $O(\prod_{l=1}^L(n_l k)^{n_0})$ in the layer widths $n_1,\ldots, n_L$ and rank~$k$, which in view of Proposition~\ref{prop:deep-lower} is tight.

\begin{theorem}[Upper bound for deep maxout networks]
\label{thm:deep-result}
Let $\mathcal{N}$ be an network with $n_0$ inputs and $L$ layers of $n_l$ maxout units of ranks $k_{l,1},\ldots, k_{l,n_l}$, $l=1,\ldots, L$.
Let $e_l=\min\{n_0,\dots, n_{l-1}\}$, $l=1,\ldots, L$.
Then
\begin{align*}
N(\mathcal{N})&\le \prod_{l=1}^L \sum_{j=0}^{e_l}\sum_{S\in{[n_l]\choose j}}\prod_{i\in S} (k_{l,i}-1).\\
\intertext{For the same network but without biases,}
N(\mathcal{N})&\le \prod_{l=1}^L
\Big( {n_l-1\choose e_{l}-1} + \sum_{j=0}^{e_{l}-1}\sum_{S\in{[n_l]\choose j}}\prod_{i\in S} (k_{l,i}-1)\Big).
\end{align*}
Moreover, these bounds are asymptotically sharp.
\end{theorem}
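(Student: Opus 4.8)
The plan is to prove the two upper bounds by a standard layer-by-layer composition argument, and then to verify that they are asymptotically sharp by matching them against the lower bound from Proposition~\ref{prop:deep-lower}. For the upper bounds, I would argue as follows. Let $\Phi = f_L\circ\cdots\circ f_1$, and let $A_{l-1}\subseteq\RR^{n_{l-1}}$ denote the affine span of the image of the function computed up to layer $l-1$ (with $A_0=\RR^{n_0}$). Since each $f_j$ is a composition of affine maps with $\max$ operations, the image of $f_{l-1}\circ\cdots\circ f_1$ lies in an affine subspace of dimension at most $\min\{n_0,\ldots,n_{l-1}\}=e_l$; this is the observation attributed to \cite{montufar2017notes}. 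Now on each linear region $R$ of $f_{l-1}\circ\cdots\circ f_1$, the composite map is affine, so it carries $R$ into $A_{l-1}$, and $f_l$ restricted to that affine image acts as a single shallow maxout layer with $e_l$-dimensional input. By Corollary~\ref{cor:convexRegionsOfRestriction} (applied with $n_0$ replaced by $e_l$, using $e_l\le n_{l-1}$), the number of linear regions of $f_l$ over an affine $e_l$-space is at most $\sum_{j=0}^{e_l}\sum_{S\in\binom{[n_l]}{j}}\prod_{i\in S}(k_{l,i}-1)$ in the biased case, and $\binom{n_l-1}{e_l-1}+\sum_{j=0}^{e_l-1}\sum_{S\in\binom{[n_l]}{j}}\prod_{i\in S}(k_{l,i}-1)$ in the bias-free case. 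Hence each region of the function computed up to layer $l-1$ is subdivided into at most that many regions by layer $l$, and multiplying these counts over $l=1,\ldots,L$ gives the two displayed bounds.

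I should be slightly careful about one point in the bias-free case: the bound in Corollary~\ref{cor:convexRegionsOfRestriction} for a linear $n_0$-space assumes biases are absent, but the \emph{effective} layer seen over an affine region $R$ need not pass through the origin of $A_{l-1}$. However, if the whole network is bias-free, then composing affine-linear pieces that are in fact \emph{linear} (no constant term) keeps the image in a linear subspace through the origin, and $f_l$ itself has no biases, so the restricted map is genuinely a bias-free shallow layer on a linear $e_l$-space; this is the case covered by the second half of Corollary~\ref{cor:convexRegionsOfRestriction}. I would add a sentence making this explicit, and also note the degenerate conventions ($\binom{n_l-1}{e_l-1}=0$ when $e_l=0$, empty products $=1$) so that the formulas are well-defined in edge cases.

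For asymptotic sharpness I would specialize to $k_{l,i}=k$ for all $l,i$ and compare with Proposition~\ref{prop:deep-lower}. The upper bound with biases is $\prod_{l=1}^L\sum_{j=0}^{e_l}\binom{n_l}{j}(k-1)^j$. For fixed $n_0$ and $L$, each factor is a polynomial in $n_l$ and $k$ whose dominant term is $\binom{n_l}{e_l}(k-1)^{e_l}=\Theta((n_l k)^{e_l})$; since $e_l\le n_0$ and typically one takes all $n_l\ge n_0$ so that $e_l=n_0$, the product is $\Theta\big(\prod_{l=1}^L (n_l k)^{n_0}\big)$. On the other side, Proposition~\ref{prop:deep-lower} with $n=n_0$ gives $N(\mathcal{N})\ge\big(\prod_{l=1}^{L-1}(\tfrac{n_l}{n_0}(k-1)+1)^{n_0}\big)\big(\sum_{j=0}^{n_0}\binom{n_L}{j}(k-1)^j\big)$, whose order in the $n_l$ and $k$ is likewise $\Theta\big(\prod_{l=1}^{L-1}(n_l k)^{n_0}\cdot (n_L k)^{n_0}\big)=\Theta\big(\prod_{l=1}^L(n_l k)^{n_0}\big)$. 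Thus upper and lower bounds agree up to constants depending only on $n_0$ and $L$, which is the asserted asymptotic sharpness; the bias-free case is identical with $n_0$ replaced by $n_0-1$ in the exponents, matching the second display of the theorem against the second bound of Proposition~\ref{prop:deep-lower}.

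The only genuinely delicate step is the dimension-drop claim $e_l=\min\{n_0,\ldots,n_{l-1}\}$ and the fact that it is the \emph{affine} (resp.\ linear) hull of the image, not merely a coordinate subspace, that controls the region count in Corollary~\ref{cor:convexRegionsOfRestriction}; but this is exactly what Corollary~\ref{cor:convexRegionsOfRestriction} was set up to handle, and the image of a piecewise-affine map through $\min\{n_0,\ldots,n_{l-1}\}$-dimensional intermediate spaces has affine hull of dimension at most that minimum, so there is no real obstacle — the argument is essentially bookkeeping on top of the shallow results already established.
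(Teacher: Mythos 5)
Your proposal follows the same route as the paper: compose the layers, bound the subdivision induced by layer $l$ on the ($\le e_l$)-dimensional affine image of the preceding layers via Corollary~\ref{cor:convexRegionsOfRestriction}, and obtain asymptotic sharpness by comparison with Proposition~\ref{prop:deep-lower}. One step, however, is genuinely unjustified as written: the claim that ``each region of the function computed up to layer $l-1$ is subdivided into at most [the shallow count] regions by layer $l$.'' A linear region $R$ of the map computed by the first $l-1$ layers need not be convex --- it is in general a union of several activation-pattern polyhedra on which the function happens to agree affinely --- and for non-convex $R$ the preimage under the affine map $\Phi^{(l-1)}|_R$ of a single (convex) region of $f_l|_A$ can meet $R$ in several connected components, so the per-region count can exceed $N(f_l|_A)$ and the naive product argument fails. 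The paper sidesteps this by working throughout with $N_c(\cdot)$, the minimal number of \emph{convex} pieces refining the linear regions: a convex piece maps affinely into the $e_l$-dimensional affine subspace, the preimage of a convex region of $f_l|_A$ intersected with a convex piece is convex and hence connected, so $N_c$ is submultiplicative, $N(H)\le N_c(H)\le N_c(G\mid d)\cdot N_c(F)$ (cf.\ \cite[Theorem D.3]{pmlr-v80-zhang18i}), and $N\le N_c$ at the end. You should recast your induction in terms of such a convex refinement.

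A secondary imprecision: the affine hull of the \emph{entire} image of the first $l-1$ layers need not have dimension at most $e_l$ (the union of the affine images of all regions can span $\RR^{n_{l-1}}$); the dimension bound $\le\min\{n_0,\dots,n_{l-1}\}$ holds per linear region (or per convex piece), which is what your argument actually uses, so state it that way. Your remarks on the bias-free case and the sharpness comparison with Proposition~\ref{prop:deep-lower} match the paper.
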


\begin{proof}
We write $\Phi^{(i,j)}:\RR^{n_{i-1}}\to \dots\to \RR^{n_j}$ for the function represented by the network consisting of layers $i,\ldots, j$, and write $\Phi^{(j)}$ for $\Phi^{(1,j)}$.
For any $l\in[L]$ we have that $\Phi = \Phi^{(l,L)}\circ \Phi^{(l-1)}$.
We consider $N_c(\Phi)$, the smallest number of convex regions that form a refinement of the linear regions of the function $\Phi$.
For a positive integer $e\le n_0$, we will also need to consider
\[
N_c(\Phi\mid e):= \max\{ N_c(\Phi|_\Omega): \Omega\subseteq \RR^{n_0}\text{ is an }e\text{-dimensional affine space}\}.
\]
Now, if $F$ is a layer with output dimension $d$,
and $H= G\circ F$, where $G$ is a layer with a compatible number of inputs, then
$ N(H) \le N_c(H) \le  N_c(G\mid d)\cdot N_c(F)$. See \cite[Theorem D.3]{pmlr-v80-zhang18i} for a discussion.
Hence
$N(\Phi)\le  N_c(\Phi^{(l,L)}\mid e_l) \cdot  N_c(\Phi^{(l-1)}) =  N_c(\Phi^{(l,L)})\cdot  N_c(\Phi^{(l-1)})$.
The bounds then follow by induction %
and Corollary~\ref{cor:convexRegionsOfRestriction}, which bounds the number of regions of a layer with inputs from an affine space of given dimension.
Finally, the asymptotic tightness of the bounds follows in view of Proposition~\ref{prop:deep-lower}.
\end{proof}

The bound is based on the observation that each of the linear regions of a network with $l-1$ layers is mapped to a polyhedron of dimension at most $e_l$ in the input space of the $l$th layer.
The $l$th layer will split each of these polyhedra into at most as many regions as it can create over an affine space of dimension $e_l$.
In principle, one can pursue a more refined analysis by recursively investigating the arrangement that is induced by the $l$th layer on the graph of the $(l-1)$-layer network, similar to the analysis that we conduct in Sections~\ref{sec:Zaslavsky} and \ref{sec:Weibel-Zaslavsky} for maxout arrangements.
\section{Face counting formulas \`a la Weibel}
\label{sec:Weibel}

Weibel \cite{Weibel12} obtained a counting formula for the number of faces of large Minkowski sums of full-dimensional polytopes $P_1,\ldots, P_m\subseteq\mathbb{R}^{n+1}$, $m\geq n+1$, $\dim(P_i)=n+1$,
in terms of the numbers of faces of partial sums of up to $n$ of the polytopes.
In the following we give a similar formula for the number of \emph{upper} faces, which also holds when the summands have arbitrary dimensions.
The idea of \cite{Weibel12} is to enumerate the $s$-faces of %
$P=P_1+\cdots+P_m$ by inclusion-exclusion of the $s$-faces of the partial sums $P_S=\sum_{i\in S}P_i$, $S\subseteq\{1,\ldots, m\}$ with $1\leq |S|\leq n$.
In order to do this, polytopes are associated with spherical complexes, and cells of the complex are assigned a witness \emph{westernmost corner}.
We use similar definitions with slight modifications.

\begin{definition}[Spherical complex and upper complex of a polytope]
Let $P\subseteq\RR^{n+1}$ be a polytope. To each face $F$ of $P$ we associate the cell of directions it maximizes: $C(F,P)=\{l\in \mathbb{S}^{n}\colon \langle l, x -y\rangle > 0\; \forall x\in F, y\in P\setminus F\}$. The set of all such cells is a spherical complex $\mathcal{G}(P)$ dual to $P$.
The upper complex $\mathcal{G}^+(P)$ consists of the intersections of cells in $\mathcal{G}(P)$ with $(\mathbb{R}^{n}\times \mathbb{R}_{>0})$. The upper part $P^+$ of a polytope $P$ in $\mathbb{R}^{n+1}$, is the collection of faces $F$ of $P$ %
whose cells $C(F,P)$ intersect $(\mathbb{R}^{n}\times \mathbb{R}_{>0})$.
Let $\mathbb{S}^n_{\geq0} = \mathbb{S}^{n}\cap(\mathbb{R}^{n}\times\mathbb{R}_{\geq0})$.
\end{definition}

\begin{definition}[General orientation]
  A polytope $P\subseteq\RR^{n+1}$ is said to be in general orientation (relative to our coordinates) if none of the great circles defined by one-dimensional cells in $\mathcal{G}(P)$ contains a standard unit vector. A family of polytopes $P_1,\dots,P_m\subseteq\RR^{n+1}$ is in general orientation, if each $P_i$ is in general orientation and for all $S\subseteq [m]$ and any $C_i\in \mathcal{G}(P_i)$, $i\in S$,
  the intersection
  $\bigcap_{i\in S} C_i$ is either empty or has codimension at least $\min\{\sum_i \operatorname{codim}(C_i),n\}$. %
\end{definition}

For $i=n+1,n-1,n-3,\ldots$, let $U^{i}$ be the $i$-dimensional %
subspace $\operatorname{span}\{e_{n+1-i+1},\ldots,e_{n+1}\}$ if $i\geq 1$ and just the zero space $\{0\}$ if $i\leq 0$. Given a fixed $U^i$, we %
write $\hat e_{k} = e_{n+1-i+1+k}$ for $1\leq k\leq i$ so that $\hat e_{1},\dots,\hat e_{i}$ is a basis of it.

\begin{definition}[Direction west]\label{def:directionWest}
Let $i>1$.
At every point in $\mathbb{S}^n\cap U^i \cong \mathbb{S}^{i-1}$ and not in  $\mathbb{S}^{n}\cap U^{i-2} \cong \mathbb{S}^{i-3}$
define the direction west as the direction of increasing $\theta_1$ in the coordinate system given by
$\mathbb{S}^1 = \{\sin(\theta_1)\hat e_{1} + \cos(\theta_1) \hat e_{2} \colon \theta_1\in[0,2\pi)\}$ and
$\mathbb{S}^k = \{\sin(\theta_k) \mathbb{S}^{k-1} + \cos(\theta_k)\hat e_{k+1}\colon \theta_{k+1}\in[0,\pi]\}$ for $k=2,3,\ldots,i-1$.
 Here, $\hat e_{k}=e_{n+1-i+1+k}$.
\end{definition}

\begin{figure}
    \centering
    \begin{tikzpicture}
      \draw (0,-3.925) -- ++(0,-0.25);
      \draw (0,-4.1) ellipse (10mm and 4mm);
      \node[anchor=north] (westernmostCorners) at (0,0) {\includegraphics[width=4cm]{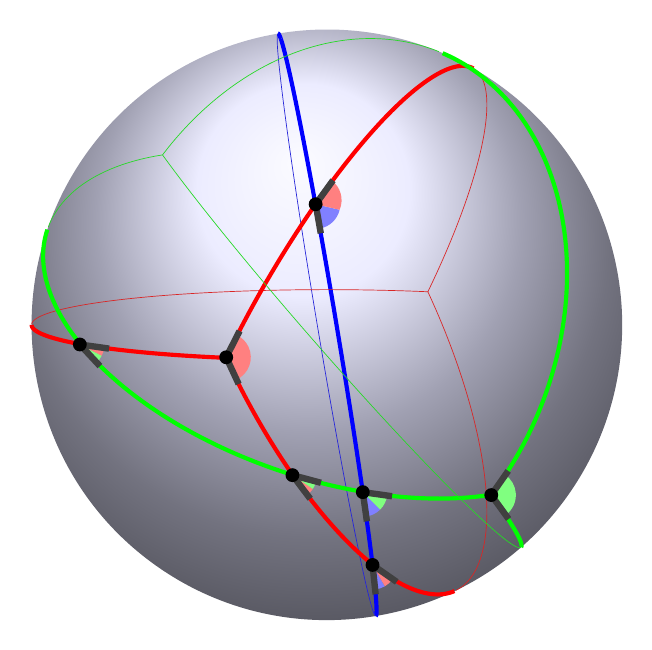}};
      \node at (0,-4.5) {$<$};
      \node[anchor=south,font=\footnotesize] at (0,-4.9) {west};
      \node[anchor=north west,xshift=1mm,yshift=5mm] at (westernmostCorners.north west) {(a)};
      \fill (0,-0.325) circle (1.5pt); %
      \draw (0,-0.325) -- ++(0,0.75) node[anchor=north west,font=\footnotesize] {$U^1=\text{span}\{e_3\}$};
      \node[anchor=north east,xshift=-1mm,yshift=-1mm] at (westernmostCorners.north east) {$\mathbb S^2_{\phantom{\geq 0}}$};
      \fill[black!70] (0,-3.9) circle (1.5pt);

      \fill (5,-3.9) circle (1.5pt);
      \node[anchor=north] (fullSphere) at (5,0)
      {\includegraphics[width=4cm]{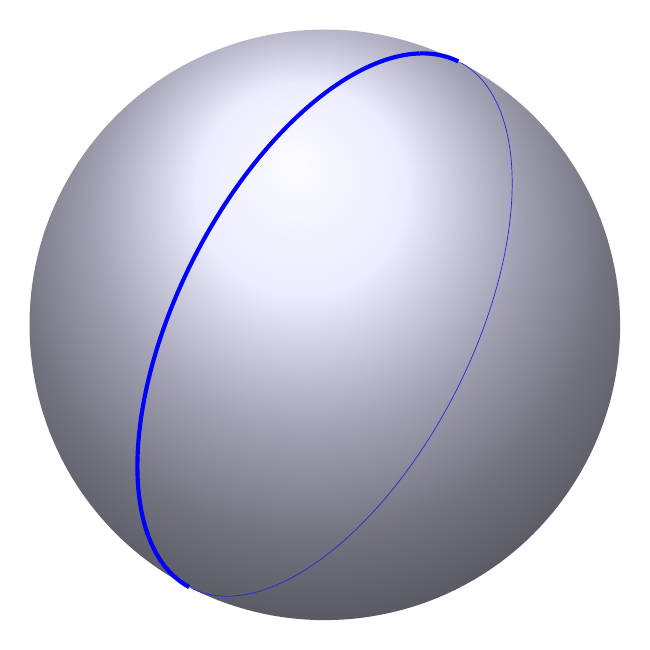}};
      \node[anchor=north east,xshift=-1mm,yshift=-1mm] at (fullSphere.north east) {$\mathbb S^2_{\phantom{\geq 0}}$};
      \node[anchor=north west,xshift=1mm,yshift=5mm] at (fullSphere.north west) {(b)};
      \fill (5,-0.325) circle (1.5pt);
      \fill[black!70] (5,-3.9) circle (1.5pt);

      \node[anchor=north] (halfSphere) at (10,0)
      {\includegraphics[clip=true, trim=0cm 2.6cm 0cm 0cm,width=4cm]{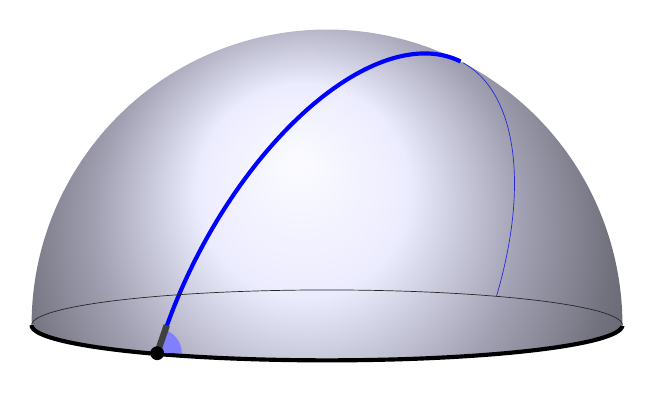}}; %
      \node[anchor=north east,xshift=-1mm,yshift=-1mm] at (halfSphere.north east) {$\mathbb S^2_{\geq 0}$};
      \node[anchor=north west,xshift=1mm,yshift=5mm] at (halfSphere.north west) {(c)};
      \fill (10,-0.325) circle (1.5pt);

      \node[anchor=north west,xshift=10mm,yshift=0mm,font=\footnotesize,text width=50mm] (fullSphereText) at (fullSphere.south west) {blue cell has no\\ westernmost point};
      \draw[->] ($(fullSphereText.north west)+(0.3,0)$) -- ++(0,0.4);
      \node[anchor=north west,xshift=2mm,yshift=-2mm,font=\footnotesize] (halfSphereText) at (halfSphere.south west) {westernmost corner of blue cell};
      \draw[->] ($(halfSphereText.north west)+(0.9,0)$) -- ++(0,0.4);
    \end{tikzpicture}\vspace{-3mm}
    \caption{(a) Shown is $\mathcal G(P_1+P_2+P_3)$, where $P_1$, $P_2$, $P_3$ have $2$, $3$, $3$ vertices respectively, and the westernmost corners of its $0$-, $1$-, and $2$-cells (westernmost corners of $2$-cells are colored by their support); (b) Cells of $\mathcal G(P)$, $P$ lower-dimensional, may not 
    have westernmost points, but
    (c) cells of $\mathcal G^+(P)$ do.}
    \label{fig:westernmostCorners}
\end{figure}

\begin{definition}[Westernmost point and westernmost corner]
We define a westernmost point of a cell $C\subseteq\mathbb{S}^n$ as follows:
Let $U^i$ be the smallest subspace in the sequence $U^{n+1},U^{n-1},\ldots$ which has a nonempty intersection with $C$. If $i>1$, a westernmost point of $C$ is a local optimizer of the direction west 
in the closure of $C\cap U^i$.
If $i=1$, then $C\cap U^i\subseteq\{\pm e_{n+1}\}$.
For $C\cap U^i=\{\pm e_{n+1}\}$ the westernmost point of $C$ is $e_{n+1}$, otherwise it is the single point in the intersection.
Finally, we define a westernmost corner of a cell as the intersection of its closure with a small ball around a westernmost point.
\end{definition}

The definition is illustrated in Figure~\ref{fig:westernmostCorners} (a). Example~\ref{example:westmostPoints} illustrates the existence and non-existence of westernmost corners. Lemma~\ref{lemma:existence} will state sufficient conditions for existence and uniqueness.

\begin{example}\label{example:westmostPoints}\
\begin{enumerate}[leftmargin=*]
\item Consider the upper sphere $C=\mathbb{S}^n_{\geq0}$.
If $n$ is even, then the smallest subspace among $U^{n+1},U^{n-1},\ldots,U^{1}$ which intersects $C$ is $U^1$. Hence the westernmost point of $C$ is the north pole $C\cap U^1 = \{(0,\ldots, 0,1)\}$.
If $n$ is odd, then the smallest subspace which intersects $C$ is $U^2$.
Hence the westernmost point of $C$ is the optimizer of $\theta_1$ over the half-circle $\mathbb{S}^n_{\geq0}\cap U^2\cong \mathbb{S}^1_{\geq0}$, which is $\{(0,\ldots,0,1,0)\}$.

\item If $P$ is not full dimensional, %
not every cell of $\mathcal G(P)$ needs to have a westernmost point:
Consider $P=\operatorname{conv}\{v,-v\}\subseteq \RR^3$ for a generic $v\in\mathbb{S}^2$; see Figure~\ref{fig:westernmostCorners} (b). Then $\mathcal G(P)$ consists of %
two open half-spheres and a great circle. Each of the half-spheres intersects $U^1$ at a single point, which is their westernmost point. The great circle %
does not intersect $U^1$ and
has no local optimum for the direction west. Hence it %
has no westernmost point. Note however that each %
cell of $\mathcal G^+(P)$ has a unique westernmost point; see Figure~\ref{fig:westernmostCorners}~(c).
\item If a cell is not in general orientation relative to the $U^i$, then it can have multiple westernmost points: Consider a segment of a great circle in $\mathbb{S}^2$ passing through north and south poles. If it does not contain either pole, then any of its points is a westernmost point.
\end{enumerate}
\end{example}

\begin{lemma}\label{lemma:existence}
  Let $P\subseteq \RR^{n+1}$ be a polytope. If $P$ is in general orientation (relative to the coordinate system), then each cell of $\mathcal G^+(P)$ has a unique westernmost corner. Additionally, if $P$ is full-dimensional, then each cell of $\mathcal G(P)$ has a unique westernmost corner.
\end{lemma}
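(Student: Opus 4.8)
The plan is to treat both statements at once, the common feature being that in each case the closed cell $\overline C$ under consideration is \emph{spherically convex}, i.e.\ contained in an open half-sphere of $\mathbb S^n$: for a cell of $\mathcal G^+(P)$ because it is the trace on $\mathbb S^n$ of a convex cone (a normal cone intersected with the open half-space $\RR^n\times\RR_{>0}$) whose geodesics stay inside it, and for a cell of $\mathcal G(P)$ with $P$ full-dimensional because the normal cone of a proper face is then pointed. Hence $\overline C$, and its relative interior $C$, are connected, contain no antipodal pair, and have connected intersection with every linear subspace. First I would record this, and then invoke general orientation to put $C$ in transverse position with respect to the coordinate flag: for every subspace $U^j$ in the sequence $U^{n+1}\supset U^{n-1}\supset\dots$, the set $\overline C\cap U^j=\overline{C\cap U^j}$ is empty if $\dim C+\dim U^j<n+1$ and otherwise has dimension exactly $\dim C+\dim U^j-(n+1)$, and likewise $\operatorname{span}(\overline C)$ meets $U^j$ transversally. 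I expect this to be the main obstacle, because the stated notion of general orientation only constrains the great circles carried by $1$-cells, so one has to propagate general position through the face lattice of $\overline C$ (whose faces are carried by spans of facet normals generic relative to the coordinate flag); alternatively one may just take this transversality to be the operative meaning of ``general orientation'' here.

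Granting transversality, the key arithmetic observation is that the subspaces in the sequence have exactly the dimensions $\equiv n+1\pmod 2$. So, writing $d=\dim C$, the smallest $U^i$ meeting $C$ has $i=n+1-d$ if $d$ is even and $i=n+2-d$ if $d$ is odd, whence $Q:=\overline{C\cap U^i}$ is $0$-dimensional in the first case and $1$-dimensional in the second, while in both cases $\overline C\cap U^{i-2}=\varnothing$ and $\operatorname{span}(\overline C)\cap U^{i-2}=\{0\}$; in particular $Q$ misses the polar sphere $\mathbb S^n\cap U^{i-2}$ on which the direction west degenerates. If $\dim Q=0$ then $Q$, being connected, is a single point $p$, which lies in $C$ and is trivially the unique local optimizer of west in $Q$ (and when $i=1$ the definition names $p$, or $e_{n+1}$, directly); so the westernmost point, and hence the westernmost corner $\overline C\cap B(p,\epsilon)$, is unique.

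The remaining case $\dim Q=1$ is where the real work is. Here $Q$ is a closed geodesic arc lying in the great circle $\gamma:=\mathbb S^n\cap\operatorname{span}(Q)$, and by spherical convexity it is a \emph{proper} sub-arc of $\gamma$ with two distinct, non-antipodal endpoints. Since $\operatorname{span}(Q)\subseteq\operatorname{span}(\overline C)$ meets $U^{i-2}$ only in $\{0\}$, the orthogonal projection $\pi$ of $\RR^{n+1}$ onto $\operatorname{span}\{\hat e_1,\hat e_2\}$ is injective on $\operatorname{span}(Q)$; thus $\pi(\gamma)$ is the image of a circle under an injective linear map, i.e.\ an ellipse centered at the origin that does not pass through it. As $\theta_1$ is, along $\gamma$, a continuous branch of the polar angle of $\pi$ (up to a fixed rotation and orientation), and the polar angle increases strictly monotonically along an origin-centered ellipse, $\theta_1$ is strictly monotone along $\gamma$, hence along the proper sub-arc $Q$. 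Therefore ``west'' has a unique local optimizer on $Q$, the endpoint $p$ at which $\theta_1$ is largest, and $\overline C\cap B(p,\epsilon)$ is the unique westernmost corner. Combining the two cases yields the claim for every cell of $\mathcal G^+(P)$, and — when $P$ is full-dimensional — for every cell of $\mathcal G(P)$.
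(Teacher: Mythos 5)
Your starting point (spherical convexity of the cells) and your honest flag about the weakness of the paper's definition of general orientation both match the spirit of the paper's argument, which simply cites Weibel's Lemma~6 for existence of westernmost corners of spherically convex cells and then invokes general orientation for uniqueness. However, your self-contained replacement for that citation has a genuine gap in the dimension count. You assert that $\overline C\cap U^j$ is nonempty whenever $\dim C+\dim U^j\ge n+1$, so that the smallest flag subspace meeting $\overline C$ always cuts out a set $Q$ of dimension $0$ or $1$. Transversality bounds the dimension of a \emph{nonempty} intersection, but for a convex cell (unlike for a pair of linear subspaces) it does not force the intersection to be nonempty: already for $n=2$, a full-dimensional cell $C$ of $\mathcal G(P)$ is a small spherically convex polygon that in general contains neither pole, so $\overline C\cap U^1=\emptyset$ and the smallest subspace meeting $\overline C$ is $U^3=\RR^3$ itself, giving $Q=\overline C$ of dimension $2$. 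The same happens for the top-dimensional cells of $\mathcal G^+(P)$: only one of them contains the north pole. These top-dimensional cells are exactly the ones that encode the (upper) vertices of $P$, i.e.\ the objects the lemma is ultimately used to count, so the case $\dim Q\ge 2$ is not a corner case but the main one, and your two-case analysis never addresses it.

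What is missing is precisely the content of Weibel's Lemma~6: for a spherically convex cell of arbitrary dimension in general orientation, the iterated optimization of the direction west (first $\theta_1$, then descending through the flag on the argmax set) has a unique local optimum. Concretely, one must show that on a $d$-dimensional spherically convex $\overline C$ avoiding $U^{i-2}$ the function $\theta_1$ admits a continuous branch (this uses pointedness of the cone, as you implicitly note), that its local maxima on $\overline C$ are global and attained on a single face, and that general orientation prevents that face from being positive-dimensional. Your ellipse argument for the $1$-dimensional case is correct and is in fact the right building block (it shows $\theta_1$ is strictly monotone along each edge, so local maxima occur only at vertices of $\overline C$), but it needs to be upgraded to an argument about local versus global maxima on higher-dimensional convex cells before the proof is complete. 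The transversality statement you propose as "the operative meaning of general orientation" should likewise be weakened to a statement about the dimension of the intersection \emph{when nonempty}.
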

\begin{proof}
Note that cells of $\mathcal G^+(P)$ and, for $P$ full-dimensional, of $\mathcal G(P)$ are spherically convex in the sense that any shortest arc between two points is inside the set. Combined with $P$ being in general orientation, \cite[Lemma 6]{Weibel12} shows that every cell has a westernmost point and %
corner. %
Since $P$ is in general orientation, each cell has a single westernmost point.
\end{proof}

\begin{definition}[Support]
Let $P_1,\ldots,P_m\subseteq\RR^{n+1}$ be a family of polytopes. The support of a point $w\in\mathbb{S}^n$ is defined as
\[ \Supp_{P_1,\dots,P_m}(w)\coloneqq\{ i\in [m]\colon  w
\in C_i \text{ for some } C_i\in \mathcal{G}(P_i) \text{ with }\operatorname{co-dim}(C_i)\geq 1\}\subseteq [m]. \]
In particular, the support of a generic point is $\emptyset$.
The support $\Supp_{P_1,\dots,P_m}(W)$ of
a westernmost corner $W$ of a cell %
is defined to be the support of its westernmost point.
\end{definition}

The following lemma points out that for polytopes in general orientation, westernmost corners that appear in one sub-complex also appear in any larger sub-complex.

\begin{lemma}\label{lemma:sequence}
  Let $P_1,\ldots, P_m$ be a family of polytopes in general orientation, and let $W$ be a westernmost corner of an $s$-cell of $\mathcal G(P_{[m]})$. Then $W$ is the westernmost corner of an $s$-cell of $\mathcal G(P_S)$ if and only if $\Supp_{P_1,\ldots, P_m}(W)\subseteq S$.
\end{lemma}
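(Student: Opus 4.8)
The plan is to prove both directions of the equivalence by carefully tracking how the westernmost point of a cell behaves as we pass between the subsum $P_S$ and the full sum $P_{[m]}$, using the fact that faces of Minkowski sums decompose canonically as Minkowski sums of faces of the summands, and dually that cells of $\mathcal{G}(P_{[m]})$ are common refinements of the cells of the $\mathcal{G}(P_i)$.

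\medskip

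\noindent\textbf{Setup and the dual picture.} Recall that for a Minkowski sum $P = \sum_{i\in[m]} P_i$, each face of $P$ is uniquely $\sum_{i} F_i$ with $F_i$ a face of $P_i$, and in the dual spherical complex this says $\mathcal{G}(P_{[m]})$ is the common refinement $\bigwedge_i \mathcal{G}(P_i)$: a cell $C$ of $\mathcal{G}(P_{[m]})$ is the relative interior of $\bigcap_{i} \overline{C_i}$ for uniquely determined cells $C_i \in \mathcal{G}(P_i)$, and $C = \bigcap_i C_i$ when we intersect relative interiors. The same holds for any subset $S$: a cell of $\mathcal{G}(P_S)$ is an intersection $\bigcap_{i\in S} C_i$. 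A point $w\in\mathbb{S}^n$ lies in a unique cell of each $\mathcal{G}(P_i)$, namely the cell $C_i(w)$ it maximizes; the cell of $\mathcal{G}(P_S)$ containing $w$ is $\bigcap_{i\in S} C_i(w)$, with codimension (for $P_1,\dots,P_m$ in general orientation) equal to $\min\{\sum_{i\in S}\operatorname{codim}(C_i(w)), n\}$ by hypothesis. The key observation is that $\operatorname{codim}(C_i(w)) \geq 1$ precisely for $i\in\Supp_{P_1,\dots,P_m}(w)$, and for all other $i$ the cell $C_i(w)$ is open (full-dimensional), so intersecting with it does not change which cell we are in locally. Consequently, in a small neighborhood of $w$, the cell of $\mathcal{G}(P_S)$ through $w$ and the cell of $\mathcal{G}(P_{S'})$ through $w$ \emph{coincide} whenever $S\cap\Supp(w) = S'\cap\Supp(w)$; in particular the cell of $\mathcal{G}(P_{[m]})$ through $w$ and the cell of $\mathcal{G}(P_S)$ through $w$ agree near $w$ as soon as $\Supp(w)\subseteq S$.

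\medskip

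\noindent\textbf{The forward and reverse implications.} Suppose first that $W$ is the westernmost corner of an $s$-cell $C$ of $\mathcal{G}(P_S)$, with westernmost point $w$. By definition $\Supp_{P_1,\dots,P_m}(W) = \Supp_{P_1,\dots,P_m}(w)$, and this support is contained in $S$: indeed if $i\in\Supp(w)\setminus S$, the index $i$ plays no role in forming $C$, but I claim it cannot happen that $\operatorname{codim}(C_i(w))\geq 1$ while $w$ is still a westernmost point of a cell not involving $i$ — more precisely, this is exactly the content that needs the general-orientation hypothesis and Lemma~\ref{lemma:existence}. Let me instead argue the clean direction: assume $\Supp_{P_1,\dots,P_m}(W)\subseteq S$ and that $W$ is a westernmost corner of an $s$-cell $C$ of $\mathcal{G}(P_{[m]})$, with westernmost point $w$. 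By the neighborhood coincidence above, the cell $C'$ of $\mathcal{G}(P_S)$ containing $w$ agrees with $C$ in a neighborhood of $w$, so it is also $s$-dimensional and its closure near $w$ equals $\overline{C}$ near $w$. Being a westernmost corner is a purely local condition (it is the intersection of the closure with a small ball around a local optimizer of the direction west within $\overline{C}\cap U^i$, where $i$ is minimal with $C\cap U^i\neq\emptyset$); since $C$ and $C'$ agree near $w$, they hit the same $U^i$ near $w$ and have the same westernmost corner $W$. This gives "$\Leftarrow$" for the full complex. For the other direction of Lemma~\ref{lemma:sequence}, suppose $W$ is a westernmost corner of an $s$-cell of $\mathcal{G}(P_S)$ and of the $s$-cell $C$ of $\mathcal{G}(P_{[m]})$ simultaneously (which is the content we want to extract: the "only if" says if $W$ appears in $\mathcal{G}(P_S)$ then $\Supp(W)\subseteq S$). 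Here we argue contrapositively: if some $i_0\in\Supp_{P_1,\dots,P_m}(w)$ with $i_0\notin S$, then passing from $P_S$ to $P_{S\cup\{i_0\}}$ strictly increases the codimension of the cell through $w$ (by the general-orientation codimension formula, as long as we have not yet reached codimension $n$), so the cell of $\mathcal{G}(P_S)$ through $w$ has strictly smaller codimension than the cell of $\mathcal{G}(P_{[m]})$ through $w$; hence they cannot both be $s$-cells containing $W$ as a westernmost corner with the same dimension $s$. This forces $\Supp(W)\subseteq S$.

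\medskip

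\noindent\textbf{Main obstacle.} The delicate point is the bookkeeping around the case where codimensions saturate at $n$ (the $\min$ in the general-orientation definition), and making rigorous the assertion that "westernmost corner" is genuinely a local invariant under the cell-coincidence near $w$ — in particular, that the \emph{minimal} subspace $U^i$ meeting the cell, which governs the definition of westernmost point, is the same for $C$ and for $C'$. This requires checking that $\overline{C}\cap U^i$ and $\overline{C'}\cap U^i$ agree in a neighborhood of $w$ for the relevant $i$, which follows because $C$ and $C'$ agree as sets in a neighborhood of $w$ and $U^i$ is a fixed linear subspace, but one must be careful that $w$ itself lies in $U^i$ (it does, by definition of westernmost point) so that the neighborhood in $\mathbb{S}^n$ restricts to a neighborhood in $\mathbb{S}^n\cap U^i$. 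Uniqueness of the westernmost corner, guaranteed by Lemma~\ref{lemma:existence} under general orientation, is what lets us speak of "\emph{the} westernmost corner" and conclude that the corner in $\mathcal{G}(P_S)$ and in $\mathcal{G}(P_{[m]})$ are literally the same object $W$ rather than merely corners at the same point. I would organize the write-up as: (1) recall the refinement structure of $\mathcal{G}$; (2) prove the neighborhood-coincidence lemma in terms of supports; (3) deduce both implications from locality of westernmost corners plus the codimension count.
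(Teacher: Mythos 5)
Your overall strategy is the same as the paper's: its proof consists of exactly your two ingredients, namely (i) the complexes $\mathcal G(P_S)$ and $\mathcal G(P_{[m]})$ coincide in a neighbourhood of $W$ precisely when $\Supp_{P_1,\ldots,P_m}(W)\subseteq S$, and (ii) westernmost corners are determined by this local data (the paper outsources (ii) to Weibel's Lemma~7, which characterises westernmost points as the local optima of the direction west for spherically convex cells in general orientation). Your ``$\Leftarrow$'' direction is a correct rendering of this.

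The ``only if'' direction as you wrote it has a genuine gap. You compare the codimension of \emph{the cell of $\mathcal G(P_S)$ containing $w$} with that of \emph{the cell of $\mathcal G(P_{[m]})$ containing $w$} and conclude that ``they cannot both be $s$-cells.'' But the westernmost point $w$ of the $s$-cell $C$ is a local optimiser in the \emph{closure} of $C\cap U^i$ and in general lies on the boundary of $C$; the cell actually containing $w$ is then a lower-dimensional face, not $C$ itself, and its codimension is not $n-s$ in either complex. (Take $C$ full-dimensional with $w$ a vertex of $\overline C$: your comparison is then between two low-dimensional cells at $w$ and says nothing about whether some $s$-cell of $\mathcal G(P_S)$ has $W$ as its westernmost corner.) The repair is to start from the hypothesis itself: if $W$ is the westernmost corner of an $s$-cell $C'$ of $\mathcal G(P_S)$, then $\overline{C'}$ and $\overline C$ have the same germ at $w$, hence $C'$ and $C$ coincide near $w$ (both are $s$-dimensional relatively open cells and $C\subseteq C'$ because $\mathcal G(P_{[m]})$ refines $\mathcal G(P_S)$); then for $i_0\in\Supp_{P_1,\ldots,P_m}(w)\setminus S$ the transversality built into general orientation forces the positive-codimension cells of $\mathcal G(P_{i_0})$ through $w$ to genuinely cut or contain $C$ near $w$, so $C$ cannot coincide near $w$ with a cell of a complex that omits $P_{i_0}$ --- a contradiction. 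This is also where your worry about the saturated case $\sum_i\operatorname{codim}(C_i)=n$ dissolves: under the intended reading of general orientation the intersection is empty once the total codimension exceeds $n$, so saturation does not produce spurious coincident cells.
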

\begin{proof}
  Note that $\mathcal G(P_S)$ coincides with $\mathcal G(P_{[m]})$ locally around $W$ if and only if its support $\Supp_{P_1,\ldots, P_m}(W)$ is contained in $S$. Thus the claim is an immediate consequence of \cite[Lemma~7]{Weibel12}, which states that for spherically convex cells on $\mathbb S^n$ in general orientation westernmost points are %
  the local optima of the direction west.
\end{proof}

We will use the following lemma in order to enumerate the westernmost points of Minkowski sums based on the above observation.
Notice that ${m-r \choose j-r}$ is the number of subsets of $[m]$ of cardinality $j$ which contain some particular subset of $[m]$ of cardinality $r$.

\begin{lemma}\label{lemma:inclusion-exclusion}
 For any integers $0\leq r\leq n<m$, we have $\sum_{j=0}^n (-1)^{n-j} {m-1-j\choose n-j} {m-r \choose j-r} = 1$.
 In particular, for any function $\xi_{(\cdot)}:2^{[m]}\rightarrow\RR$ with $\sum_{S\in\binom{[m]}{j}} \xi_{S}={m-r \choose j-r}$ for all $0\leq j\leq n$, %
 \[ \sum_{j=0}^n (-1)^{n-j} {m-1-j\choose n-j} \sum_{S\in\binom{[m]}{j}} \xi_{S}= 1. \]
\end{lemma}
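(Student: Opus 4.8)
The plan is to prove the binomial identity $\sum_{j=0}^n (-1)^{n-j} \binom{m-1-j}{n-j}\binom{m-r}{j-r} = 1$ directly, and then note that the ``in particular'' statement is an immediate consequence: given any $\xi$ with $\sum_{S \in \binom{[m]}{j}} \xi_S = \binom{m-r}{j-r}$, we may substitute this into the left-hand side of the claimed identity $\sum_{j=0}^n (-1)^{n-j}\binom{m-1-j}{n-j}\sum_{S\in\binom{[m]}{j}}\xi_S$, reducing it termwise to the numerical identity already established. So the entire content is the first sentence.

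For the numerical identity, my first approach would be generating functions. Note that $\binom{m-r}{j-r}$ is the coefficient of $x^{j-r}$ in $(1+x)^{m-r}$, equivalently the coefficient of $x^j$ in $x^r(1+x)^{m-r}$. Also, $\binom{m-1-j}{n-j}$ counts something about the tail; the factor $(-1)^{n-j}\binom{m-1-j}{n-j}$ suggests using the identity $\binom{m-1-j}{n-j} = \binom{m-1-j}{m-1-n}$ and the standard expansion of $(1-x)^{-(m-n)}$ or a negative-binomial coefficient extraction. Concretely, I would try to write $(-1)^{n-j}\binom{m-1-j}{n-j} = [x^{n-j}]\,(1+x)^{-(m-j)}$ via $\binom{-(m-j)}{n-j} = (-1)^{n-j}\binom{m-j+n-j-1}{n-j}$ — but the exponent doesn't quite match $m-1-j$, so I'd instead use $(-1)^{n-j}\binom{m-1-j}{n-j} = [x^{n-j}](1-x)^{m-1-j-(n-j)}\cdots$ no; more carefully, $(-1)^{n-j}\binom{m-1-j}{n-j}$ is $[x^{n-j}]$ in $(1-x)^{m-1-j}$ only when $m-1-j \ge n-j$, which holds here since $m > n$. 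Then the sum becomes extracting the coefficient of $x^n$ from $\sum_j x^j \binom{m-r}{j-r}\cdot$ (something), and I would massage the product of generating functions $x^r(1+x)^{m-r}$ against the $(1-x)$-powers to land on a single clean coefficient equal to $1$.

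An alternative, possibly cleaner, route is a purely combinatorial/inductive argument: fix $r$ and induct on $n$ (for $n \ge r$; the cases $n < r$ make $\binom{m-r}{j-r}=0$ except handled separately, and one checks $n=r$ directly where the sum telescopes using $\sum_{j=r}^n(-1)^{n-j}\binom{m-1-j}{n-j} = $ something). The Pascal recursion $\binom{m-1-j}{n-j} = \binom{m-2-j}{n-j} + \binom{m-2-j}{n-1-j}$ would let me relate the $(n,m)$ sum to $(n,m-1)$ and $(n-1,m-1)$ sums, closing the induction. I expect the main obstacle to be bookkeeping the boundary terms and the ranges of validity (ensuring $m-1-j \ge n-j$ so the sign-reversal $(-1)^{n-j}\binom{m-1-j}{n-j} = \binom{j-m+n}{n-j}\cdot(-1)^{\text{stuff}}$ is the right one, and handling $j < r$ where $\binom{m-r}{j-r} = 0$), rather than any deep idea. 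A third fallback, which I would keep in reserve, is to recognize the alternating sum as a finite difference: $\sum_{j}(-1)^{n-j}\binom{m-1-j}{n-j}a_j$ is (up to reindexing) an $n$-th finite difference of the sequence $j \mapsto \binom{m-r}{j-r}$ evaluated suitably, and since that sequence agrees with a polynomial of degree exactly $m-r$ in $j$ only for... actually it is not polynomial, so this route needs the Vandermonde-type identity $\sum_j (-1)^j\binom{a}{j}\binom{b}{c-j} = $ coefficient extraction instead. Given the choices, I would go with the generating-function computation first and fall back to the Pascal induction if the coefficient extraction gets messy.
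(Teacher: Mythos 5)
Your proposal is correct, but it takes a genuinely different route from the paper. The paper proves the identity by induction on $m\geq n+1$ (with $n$ and $r$ fixed), using for the base case $m=n+1$ that $\binom{m-1-j}{n-j}=\binom{n-j}{n-j}=1$, so the sum reduces to $\sum_{j=0}^{n}(-1)^{n-j}\binom{n+1-r}{j-r}=1$, which follows from the vanishing alternating sum $\sum_{j=0}^{n+1}(-1)^{j}\binom{n+1-r}{j-r}=0$. Your primary route, via generating functions, does go through and lands exactly where you hoped: writing $(-1)^{n-j}\binom{m-1-j}{n-j}=[x^{n}]\,x^{j}(1-x)^{m-1-j}$ (valid since $m-1-j\geq n-j$) and summing against $\binom{m-r}{j-r}$ gives $[x^{n}]\,(1-x)^{m-1}\sum_{j}\binom{m-r}{j-r}\bigl(\tfrac{x}{1-x}\bigr)^{j}=[x^{n}]\,x^{r}(1-x)^{(m-1)-r-(m-r)}=[x^{n}]\,\tfrac{x^{r}}{1-x}=1$ because $0\leq r\leq n$. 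Equivalently, after substituting $i=j-r$ and applying upper negation, $(-1)^{n-j}\binom{m-1-j}{n-j}=\binom{(n-r)-(m-r)}{(n-r)-i}$, so the sum is a single Vandermonde convolution $\sum_{i}\binom{(n-r)-(m-r)}{(n-r)-i}\binom{m-r}{i}=\binom{n-r}{n-r}=1$; this is the ``Vandermonde-type identity'' you kept in reserve, and it is arguably the cleanest writeup. What your approach buys is a one-line closed-form verification with no induction; what the paper's buys is staying within the elementary alternating-sum manipulations it reuses elsewhere. Your reduction of the ``in particular'' clause to the numerical identity by termwise substitution matches the paper's (implicit) treatment and is fine; just make sure to note that the terms with $j<r$ vanish, as you already flagged.
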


\begin{proof}
 The proof follows by induction over $m\geq n+1$, using for $m=n+1$ the fact that $\sum_{j=0}^{n+1} (-1)^j{n+1-r\choose j-r}=0$ and hence $\sum_{j=0}^n (-1)^{n-j}{n+1-r\choose j-r}=1$ for any $0\leq r<n+1$.
\end{proof}

We obtain the following linear relation between the number of upper $s$-faces of a Minkowski sum of $m$ polytopes and the number of upper $s$-faces of subsums of at most $n$ of the polytopes.
This is a version of Weibel's theorem \cite[Theorem~1]{Weibel12} for the case of upper faces. Whereas that result is for sums of full-dimensional polytopes, our statement is valid for any dimensions. %

\begin{theorem}[Number of upper faces of Minkowski sums]
\label{thm:f_vectors_upper_part_Minkowski_sum}
Let $P_1,\ldots, P_{m}$ be %
any positive dimensional polytopes in $\mathbb{R}^{n+1}$ in general orientations, $m\geq n+1$, and $P=P_1+\cdots+P_m$.
Then for the number of $s$-faces of the upper part we have
$$
f_s(P^+) = \sum_{j=0}^{n} (-1)^{n-j} {m-1-j \choose n-j} \sum_{S\in{[m]\choose j}} f_s(P_S^+) , \quad s=0,\ldots, n,
$$
where
$P_S = (\sum_{i\in S}P_i)$ for any nonempty $S\subseteq[m]$, and $P_\emptyset=\{0\}$.
\end{theorem}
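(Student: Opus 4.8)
The plan is to follow the inclusion–exclusion strategy of Weibel, but to bookkeep only the \emph{upper} cells of the spherical complexes, using the westernmost-corner machinery developed above as the combinatorial device that assigns each upper face a unique witness point. First I would fix $s$ and set up a bijection between $s$-faces of the upper part $P^+$ and $s$-cells of the upper complex $\mathcal{G}^+(P)$; by Lemma~\ref{lemma:existence}, since $P_1,\dots,P_m$ are in general orientation (so each sub-sum $P_S$ is too), every $s$-cell of $\mathcal{G}^+(P_S)$ has a \emph{unique} westernmost corner, which lies in $\mathbb{S}^n_{\geq 0}$. So the quantity $f_s(P_S^+)$ equals the number of westernmost corners of $s$-cells of $\mathcal{G}(P_S)$ whose westernmost point lies in the open upper half-sphere $\mathbb{S}^n\cap(\mathbb{R}^n\times\mathbb{R}_{>0})$ — I would need to check that the westernmost point of an upper cell indeed lands in the open (not merely closed) upper half-sphere, which follows because $U^i\subseteq\mathbb{R}^n\times\mathbb{R}_{>0}$ for the relevant $i$ after the odd/even subspace dimensions are chosen as in Definition~\ref{def:directionWest} (the $U^i$ are spanned by the \emph{last} coordinates, including $e_{n+1}$).

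Next I would run the double count. For a westernmost corner $W$ of an $s$-cell of $\mathcal{G}(P_{[m]})$ lying in the upper half-sphere, Lemma~\ref{lemma:sequence} says $W$ reappears as the westernmost corner of an $s$-cell of $\mathcal{G}(P_S)$ exactly when $\Supp_{P_1,\dots,P_m}(W)\subseteq S$. Write $r(W)=|\Supp(W)|$; by general orientation $r(W)\leq s\leq n$ (a westernmost corner of an $s$-cell can only be supported on at most $s$ of the polytopes, since each contributing codimension-one cell cuts down dimension and the cell still has dimension $s$ — I should state this codimension bound carefully, but it is the content of the general-orientation hypothesis). Summing $f_s(P_S^+)$ over $S\in\binom{[m]}{j}$ therefore counts each such $W$ with multiplicity equal to the number of $j$-subsets of $[m]$ containing $\Supp(W)$, namely $\binom{m-r(W)}{j-r(W)}$. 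Thus if I define $\xi_S$ to be the number of upper westernmost corners of $s$-cells of $\mathcal{G}(P_{[m]})$ with support exactly $S$, then $\sum_{S\in\binom{[m]}{j}}\xi_S$ is \emph{not} literally $\binom{m-r}{j-r}$ for a single $r$, so I cannot apply Lemma~\ref{lemma:inclusion-exclusion} verbatim; instead I would group the upper $s$-faces of $P$ by the support set $T$ of their westernmost corner, and for each fixed $T$ (with $|T|=r\leq n$) the contribution of that one face to $\sum_{j=0}^n(-1)^{n-j}\binom{m-1-j}{n-j}\sum_{S\in\binom{[m]}{j}}f_s(P_S^+)$ is $\sum_{j=0}^n(-1)^{n-j}\binom{m-1-j}{n-j}\binom{m-r}{j-r}$, which equals $1$ by the first identity in Lemma~\ref{lemma:inclusion-exclusion}. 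Summing over all upper $s$-faces of $P$ gives exactly $f_s(P^+)$, which is the claimed identity.

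The main obstacle I anticipate is making the support/codimension bookkeeping fully rigorous in the non-full-dimensional setting. When the $P_i$ are lower-dimensional, $\mathcal{G}(P_i)$ is a complex on $\mathbb{S}^n$ whose cells are not all spherically convex, westernmost points may fail to exist for the non-upper cells (Example~\ref{example:westmostPoints}(2)), and I must be certain that restricting attention to $\mathcal{G}^+$ cures this uniformly across \emph{all} sub-sums simultaneously, not just for $P_{[m]}$. Concretely I need: (i) that the westernmost point of an upper $s$-cell of any $\mathcal{G}(P_S)$ lies in the open upper half-sphere and has well-defined support of size $\leq s$; (ii) that Lemma~\ref{lemma:sequence}'s "if and only if" genuinely transfers, i.e. that $\mathcal{G}(P_S)$ agrees with $\mathcal{G}(P_{[m]})$ near $W$ precisely when $\Supp(W)\subseteq S$ — this is the local-structure claim and is where the general-orientation codimension hypothesis does the real work. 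A secondary, more routine check is the range of $s$: the theorem asserts the identity for $s=0,\dots,n$, and I should confirm that for such $s$ the support of a westernmost corner is automatically $\leq n$ so that the sum $\sum_{j=0}^n$ captures all contributions (no face is "missed" by truncating at $j=n$), which again is exactly the general-orientation bound $\min\{\sum_i\operatorname{codim}(C_i),n\}$ appearing in the definition. Once these are nailed down, the algebraic core is just the two-line identity already proved in Lemma~\ref{lemma:inclusion-exclusion}.
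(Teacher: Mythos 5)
Your proposal is correct and follows essentially the same route as the paper: westernmost corners of cells of the upper complexes as unique witnesses (Lemma~\ref{lemma:existence}), Lemma~\ref{lemma:sequence} to decide when a corner survives into a sub-sum, and the first identity of Lemma~\ref{lemma:inclusion-exclusion} applied per face grouped by the support of its westernmost corner (the paper likewise does not use the ``in particular'' form of that lemma). The only slips are cosmetic: by duality an $s$-face of $P^+$ corresponds to an $(n-s)$-cell of $\mathcal G^+(P)$ rather than an $s$-cell, and the paper only needs (and only claims) the bound $|\Supp(W)|\leq n$ rather than your sharper $\leq s$ --- neither affects the argument.
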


\begin{proof}
  Consider the %
  complex $\mathcal G^+(P)$, and recall that $s$-dimensional upper faces of $P$ correspond to $(n-s)$-dimensional cells of $\mathcal G^+(P)$.
  Let $W_1,\dots,W_N$ be the westernmost corners of $(n-s)$-cells of $\mathcal G^+(P)$ and let $I_1,\dots,I_N\subseteq [m]$ denote their supports.
  Note that %
  $0\leq |I_i| \leq n$ for all $i=1,\dots,N$. Let $w_s(P_S^+)$ denote the number of west-most corners of $(n-s)$-cells of $P_S^+$, so that $w_s(P_S^+)=f_s(P_S^+)$. Writing $f_s(P^+) = N = \sum_{i=1}^N 1$, we then obtain
  \begin{align*}
    &f_s(P^+)\overset{Lem.}{\underset{\ref{lemma:inclusion-exclusion}}{=}}\sum_{i=1}^N \sum_{j=0}^n (-1)^{n-j} \binom{m-1-j}{n-j}\!\!\!\! \sum_{S\in {[m] \choose j}}\!\!\! \mathbb{1}_{I_i\subseteq S}
    = \sum_{j=0}^n (-1)^{n-j} \binom{m-1-j}{n-j}\!\!\!\! \sum_{S\in {[m] \choose j}} \sum_{i=1}^N \mathbb{1}_{I_i\subseteq S} \\
    &\overset{Lem.}{\underset{\ref{lemma:sequence}}{=}} \sum_{j=0}^n (-1)^{n-j} \binom{m-1-j}{n-j} \!\!\sum_{S\in {[m] \choose j}} w_s(P_S^+)
    = \sum_{j=0}^n (-1)^{n-j} \binom{m-1-j}{n-j} \!\!\sum_{S\in {[m] \choose j}} f_s(P_S^+). \qedhere %
  \end{align*}
\end{proof}
One naturally wonders if the proof of Theorem~\ref{thm:f_vectors_upper_part_Minkowski_sum} can be extended to count the faces of the entire polytope, generalizing Weibel's result to sums of polytopes of arbitrary dimensions.
Lemma~\ref{lemma:existence} does not cover that case.
We will present an alternative approach in Section~\ref{sec:Weibel-Zaslavsky}.

Weibel \cite[Theorem~3]{Weibel12} also shows, for sums of full-dimensional polytopes, that the number of vertices is maximized when the partial sums attain the trivial upper bound.
The same arguments transfer to the case of upper vertices, and one can show the following corollary.
In the following we consider families of polytopes $P_i$ satisfying $f_0(P_i)= k_i$, $i=1,\ldots, m$.
\begin{corollary}[Upper bound for upper vertices of sums of full-dimensional polytopes]
Let $m\geq n+1$ and $k_1,\ldots, k_m\geq n+2$.
Then
$f_0(P^+) \leq \sum_{j=0}^n (-1)^{n-j}{m-1-j\choose n-j}\sum_{S\in{[m]\choose j}}\prod_{i\in S} k_i$.
\end{corollary}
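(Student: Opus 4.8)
The plan is to combine the linear relation of Theorem~\ref{thm:f_vectors_upper_part_Minkowski_sum} with the trivial upper bound on upper vertices of partial sums, exactly as Weibel does in \cite[Theorem~3]{Weibel12} for the (non-upper) vertices of full-dimensional sums. First I would apply Theorem~\ref{thm:f_vectors_upper_part_Minkowski_sum} with $s=0$, which gives the exact identity
\[
f_0(P^+) = \sum_{j=0}^n (-1)^{n-j}\binom{m-1-j}{n-j}\sum_{S\in\binom{[m]}{j}} f_0(P_S^+),
\]
valid whenever the family $P_1,\dots,P_m$ is in general orientation. By the generic-perturbation argument (the second proposition of Section~\ref{sec:main}, phrased there for maxout networks but equally applicable to upper-face counts of polytopes via lower semicontinuity of face numbers), the maximum of $f_0(P^+)$ over all families with $f_0(P_i)=k_i$ is attained at a family in general orientation, so it suffices to bound the right-hand side over such families.

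The core of the argument is then to show two facts: (i) each coefficient $(-1)^{n-j}\binom{m-1-j}{n-j}$, together with the grouping of terms, allows a monotone comparison, and (ii) for each $S$ with $|S|\le n$ one has $f_0(P_S^+)\le \prod_{i\in S} k_i$ because each vertex of $P_S^+$ is obtained by selecting one vertex from each summand. Fact (ii) is immediate: every vertex of a Minkowski sum $\sum_{i\in S}P_i$ is a sum of vertices $v_i\in P_i$, and upper vertices are in particular vertices, so $f_0(P_S^+)\le\prod_{i\in S} f_0(P_i)=\prod_{i\in S} k_i$. For fact (i), the subtlety is that the coefficients alternate in sign, so one cannot simply replace each $f_0(P_S^+)$ by its upper bound term-by-term. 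The standard resolution — and the step I expect to be the main obstacle — is to rewrite the alternating sum as a genuine (non-alternating) count. One way is to observe that when $k_1,\dots,k_m\ge n+2$, the family can be specialized so that all partial sums $P_S$ with $|S|\le n$ simultaneously attain the trivial bound $f_0(P_S^+)=\prod_{i\in S}k_i$ (for instance, by taking the $P_i$ to be in sufficiently general position / Minkowski neighborly, which is where the hypothesis $k_i\ge n+2$ enters, cf.\ Proposition~\ref{thm:Mneighborly_upper_bound}); for that specific configuration the right-hand side literally equals $\sum_{j=0}^n(-1)^{n-j}\binom{m-1-j}{n-j}\sum_{S\in\binom{[m]}{j}}\prod_{i\in S}k_i$, giving a lower bound of that form. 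To upgrade this to an upper bound for all configurations one argues that the alternating-sum expression, viewed as a function of the vectors $(f_0(P_S^+))_S$, is maximized precisely when all the "leading" entries (those with $|S|$ small, equivalently the $h$-vector entries below the Dehn--Sommerville threshold) are as large as possible — this is exactly the structure exploited in Weibel's proof, and it relies on expressing $f_0(P^+)$ via an $h$-vector-type quantity of the relative Cayley complex whose low-degree entries are monotone in the low-degree face numbers of the subsums, while the high-degree entries are determined by Dehn--Sommerville as positive combinations of low-degree ones.

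Concretely, I would follow Weibel's template: introduce the relevant $h$-vector (or "toric $h$-vector") of the upper complex $\mathcal G^+(P)$, show that $f_0(P^+)$ is a nonnegative linear combination of its entries, show that each entry below the symmetry threshold is bounded above by the corresponding entry of the complex built from the trivially-maximal subsums, and conclude by Dehn--Sommerville that all entries — hence $f_0(P^+)$ itself — are bounded by the value at the Minkowski-neighborly configuration. The hypotheses $m\ge n+1$ and $k_i\ge n+2$ are exactly what is needed for the Minkowski-neighborly family realizing equality to exist (small ranks would force nonfaces of low cardinality and break the argument, which is why this corollary is stated only for large ranks and why the full Theorem~\ref{thm:main-result} requires the separate, harder treatment via Theorems~\ref{thm:facessimple} and~\ref{thm:lowerbound_strict_lower}). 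I would close by remarking that the resulting bound matches the lower bound coming from the explicit construction, so it is tight. The main obstacle, as noted, is handling the sign alternation rigorously; everything else is bookkeeping that transfers verbatim from \cite{Weibel12} once "faces" is replaced by "upper faces" and Lemma~\ref{lemma:existence} is invoked to guarantee westernmost corners exist for $\mathcal G^+(P)$ without full-dimensionality.
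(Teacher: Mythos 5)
Your skeleton agrees with the paper's: the paper proves this corollary by asserting that the argument of \cite[Theorem~3]{Weibel12} (the tight bound on \emph{all} vertices of sums of full-dimensional polytopes) ``transfers to the case of upper vertices'', with Theorem~\ref{thm:f_vectors_upper_part_Minkowski_sum} supplying the identity $f_0(P^+)=\sum_{j=0}^n(-1)^{n-j}\binom{m-1-j}{n-j}\sum_{S}f_0(P_S^+)$ and Lemma~\ref{lemma:existence} guaranteeing that the westernmost-corner bookkeeping applies to $\mathcal G^+(P)$. You correctly identify the starting identity, the trivial bound $f_0(P_S^+)\le\prod_{i\in S}k_i$ on subsums, the reduction to general orientation by semicontinuity, and the fact that the alternating coefficients forbid a term-by-term substitution.

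The gap is in the one step you yourself flag as the obstacle. The mechanism you propose for taming the signs --- expressing $f_0(P^+)$ through the $h$-vector of the relative Cayley complex and closing with Dehn--Sommerville --- is Adiprasito--Sanyal's machinery, not Weibel's, and it controls the face numbers of the \emph{whole} sum $P$, not of its upper part. The upper complex $\mathcal G^+(P)$ covers only an open hemisphere, so there is no Dehn--Sommerville symmetry to invoke for it, and no analogue is established anywhere in the paper or in \cite{KarimRaman}. Indeed, the entire point of Section~\ref{sec:Weibel-Zaslavsky} (in particular Theorem~\ref{thm:lowerbound_strict_lower}) is that passing from a bound on $f_0(P)$ to a bound on $f_0(P^+)$ is \emph{not} automatic; if your proposed route worked, that section would be unnecessary. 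Weibel's actual Theorem~3 argument is an elementary count of westernmost corners grouped by their supports, with no $h$-vectors, and it is that combinatorial argument --- rerun on the cells of $\mathcal G^+(P)$, which is exactly where the full-dimensionality hypothesis $k_i\ge n+2$ and Lemma~\ref{lemma:existence} enter --- that the paper claims carries over. So: right identity, right target, but the crux (converting the signed identity into an inequality for upper vertices) is not actually carried out, and the machinery you point to for it addresses a different quantity.
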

By comparison, Weibel's upper bound for the total number of vertices
is $f_0(P) \leq \binom{m-1}{n} +$ \linebreak $\sum_{j=0}^n (-1)^{n-j}{m-1-j\choose n-j}\sum_{S\in{[m]\choose j}}\prod_{i\in S} k_i$.
Unfortunately his argument does not extend to the case where some of the $k_i$ are small, neither for all vertices nor for upper vertices.
To address that case, we will instead use the upper bound theorem by Adiprasito-Sanyal~\cite{KarimRaman}. Their result implies Proposition~\ref{thm:Mneighborly_upper_bound}, which states that if a Minkowski sum maximizes the number of vertices, then the partial sums attain the trivial upper bound $f_0(P_S)=\prod_{i\in S}f_0(P_i)$.
The problem remaining is whether maximizing the number of vertices $f_0(P)$ also entails maximizing the number of upper vertices $f_0(P^+)$ and whether the partial sums will also attain the trivial upper bound for upper vertices.
In Section~\ref{sec:Weibel-Zaslavsky} we will show that this is indeed the case.

We find it useful to rewrite the alternating sum as follows.
\begin{lemma}
\label{lemma:reformulation}
Let %
$m\ge n+1\geq 1$ %
and $k_1,\ldots, k_m \geq 2$. %
Then
\[\sum_{j=0}^{n} (-1)^{n-j} {m-1-j \choose n-j} \sum_{S\in{[m]\choose j}}\prod_{i\in S} k_i = \sum_{j=0}^n\sum_{S\in{[m]\choose j}}\prod_{i\in S}(k_i-1).\]
\end{lemma}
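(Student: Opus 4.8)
The plan is to prove the identity by induction on $m \geq n+1$, treating $n$ as fixed, since the right-hand side is a clean combinatorial quantity and the left-hand side is the alternating binomial sum whose telescoping behavior in $m$ is controlled by Lemma~\ref{lemma:inclusion-exclusion}. First I would expand the right-hand side: by definition,
\[
\sum_{j=0}^n \sum_{S\in\binom{[m]}{j}} \prod_{i\in S}(k_i-1) = \sum_{j=0}^n \sum_{S\in\binom{[m]}{j}} \sum_{T\subseteq S} (-1)^{|S|-|T|}\prod_{i\in T}k_i,
\]
and after swapping the order of summation so that $T$ is summed first, the coefficient of $\prod_{i\in T}k_i$ for a fixed $T$ with $|T|=r \leq n$ is $\sum_{j=r}^n (-1)^{j-r}\binom{m-r}{j-r}$, counting the ways to extend $T$ to a set $S$ of size $j\le n$. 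So the right-hand side equals $\sum_{r=0}^n \big(\sum_{j=r}^n (-1)^{j-r}\binom{m-r}{j-r}\big)\sum_{T\in\binom{[m]}{r}}\prod_{i\in T}k_i$.

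The key step is then to show that for each fixed $T$ with $|T|=r$, the coefficients of $\prod_{i\in T}k_i$ on both sides agree, i.e.
\[
\sum_{j=r}^n (-1)^{j-r}\binom{m-r}{j-r} \;=\; \sum_{j=r}^n (-1)^{n-j}\binom{m-1-j}{n-j}\binom{j}{r},
\]
where on the right I have used that $\sum_{S\in\binom{[m]}{j}}$ of $\prod_{i\in T}k_i$ appearing inside $\prod_{i\in S}k_i$ contributes a factor $\binom{m-r}{j-r}$... wait — more carefully, on the left-hand side of the lemma the monomial $\prod_{i\in S}k_i$ for $|S|=j$ already IS a product over all of $S$, so the coefficient of $\prod_{i\in T}k_i$ with $|T|=r$ only picks up terms with $S=T$ exactly; hence the left-hand side's coefficient of $\prod_{i\in T}k_i$ is simply $(-1)^{n-r}\binom{m-1-r}{n-r}$. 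Thus the identity to verify reduces to the purely numerical statement
\[
\sum_{j=r}^n (-1)^{j-r}\binom{m-r}{j-r} \;=\; (-1)^{n-r}\binom{m-1-r}{n-r},
\]
i.e., after reindexing $i=j-r$ and setting $M=m-r$, $\;\sum_{i=0}^{n-r}(-1)^i\binom{M}{i} = (-1)^{n-r}\binom{M-1}{n-r}$, which is the standard partial-sum-of-alternating-binomial-coefficients identity (provable by the Pascal recursion and telescoping, or by noting both sides satisfy the same recursion in $n-r$). I expect this to be entirely routine.

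The main (minor) obstacle is bookkeeping: making sure the hypotheses $m \geq n+1$ and $k_i \geq 2$ are actually used where needed — in fact $k_i\geq 2$ is not needed for the polynomial identity in the $k_i$, and the role of $m\ge n+1$ is only to keep the binomial coefficient $\binom{m-1-j}{n-j}$ well-behaved for all $0\le j\le n$; I would remark that the identity is a formal identity of polynomials in $k_1,\dots,k_m$ and holds for all $m\ge 1$ with the convention $\binom{a}{b}=0$ for $a<0$. An alternative, perhaps cleaner write-up avoids induction entirely: directly substitute $k_i-1 = k_i - 1$... rather, observe that $\sum_{j=0}^n\sum_{S\in\binom{[m]}{j}}\prod_{i\in S}(k_i-1)$ is exactly what Lemma~\ref{lemma:inclusion-exclusion} would output if one could realize $\prod_{i\in S}k_i$ as $\sum_{S'\subseteq S}(\text{something})$; concretely, set $\xi_S = \prod_{i\in S}(k_i-1)$ — but this does not satisfy the hypothesis of Lemma~\ref{lemma:inclusion-exclusion} directly, so the coefficient-comparison route above is the most transparent. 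I would present that route.
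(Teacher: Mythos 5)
Your proposal is correct and follows essentially the same route as the paper's proof: compare coefficients of each monomial $\prod_{i\in T}k_i$ on both sides (the left-hand coefficient being $(-1)^{n-r}\binom{m-1-r}{n-r}$ and the right-hand one the alternating partial sum $\sum_{i=0}^{n-r}(-1)^i\binom{m-r}{i}$), then invoke the standard identity $\sum_{i=0}^{N}(-1)^i\binom{M}{i}=(-1)^{N}\binom{M-1}{N}$. Your side remark that $k_i\geq 2$ is not actually needed for the polynomial identity is also accurate.
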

\begin{proof}
We prove the %
equality by viewing both sides as polynomials in the variables $k_i$ and examining the coefficient %
of each monomial. %
Fix a subset $S\subseteq [m]$ of size $j$. The coefficient %
for the monomial $k_S=\prod_{i\in S}k_i$ on the left-hand side of the equation is $(-1)^{n-j}\binom{m-1-j}{n-j}$.
On the right-hand side, the term $k_S$ appears with sign equal to $(-1)^{|T|-|S|}$ for each $T\supseteq S$. The coefficient %
on the right-hand side is therefore
$
\sum_{T\supseteq S} (-1)^{|T|-j}=\sum_{i = 0}^{n-j} (-1)^i \cdot \binom{m-j}{i}$.
The statement now follows %
from the following observation, which is obtained by induction on $n$:
 If $m \geq n+1\geq 1$, then $\sum_{i=0}^{n} (-1)^i  \binom{m}{i} = (-1)^{n}\binom{m-1}{n}$.
\end{proof}

\section{Face counting formulas \`a la Zaslavsky}
\label{sec:Zaslavsky}

Zaslavsky \cite{zaslavsky1975facing} proved a theorem expressing the number of regions defined by a hyperplane arrangement in terms of the characteristic polynomial, a function obtained from the intersection poset of the arrangement.
In the following we derive a similar type of result for the case of maxout arrangements.
Hyperplanes are special in that their intersections are affine spaces and can be discussed in terms of linear independence relations.
In contrast, for maxout arrangements the intersections involve linear equations and also linear inequalities.
In turn, the elements of the poset have a more complex topology that needs to be accounted for. In general the poset also has a more complex structure even if the arrangement is in general position.

\begin{definition}[Maxout arrangement]
For a collection of
$m$ maxout units %
$z_i(x) =$\linebreak $\max\{A_{i,1}(x),\ldots, A_{i,k_i}(x)\}$, $x\in\mathbb{R}^n$, $i=1,\ldots, m$, we define the maxout arrangement $\mathcal{A}=\{H_{ab}^i\colon
 \{a,b\}\in {[k_i]\choose 2}, i\in[m],  \operatorname{co-dim}(H^i_{ab})=1\}$ in $\mathbb{R}^n$ as the collection of nonempty co-dimension $1$ indecision boundaries between pairs of preactivation features, called atoms,
\begin{equation}
H_{ab}^i = \left\{x\in\mathbb{R}^n\colon A_{i,a}(x) = A_{i,b}(x) =\max_{c\in[k_i]}\{A_{i,c}(x)\} \right\}.
 \label{eq:maxoutarrangementpieces}
\end{equation}
We call the arrangement central if the affine functions $A_{i,a}$ of each unit are linear.

We let $L(\mathcal{A})$ denote the set of all possible nonempty sets obtained by intersecting subsets of elements in $\mathcal{A}$, including $\mathbb{R}^n$ as the empty set intersection. The set $L(\mathcal{A})$ is partially ordered by reverse inclusion, so that for any $s,t\in L(\mathcal{A})$ we have $s\geq t$ if and only if $s\subseteq t$. The smallest element, i.e.\ the $\hat 0$ in this poset, is $\mathbb{R}^n$.
For a given arrangement $\mathcal{A}$, we denote by $r(\mathcal{A})$ the number of connected components of $\mathbb{R}^n\setminus \cup_{H\in\mathcal{A}}H$, called the regions of $\mathcal{A}$.
\end{definition}
Note that rank-$1$ units have no indecision boundaries and can be ignored. In the following we will therefore assume without loss of generality that $k_1,\ldots, k_m\geq 2$.

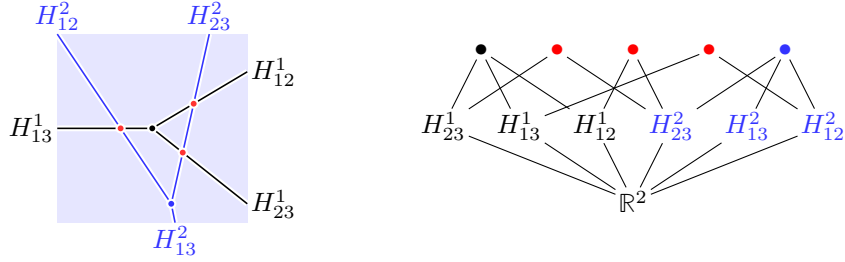
\begin{figure}
    \centering
\begin{tabularx}{115mm}{m{5cm}m{65mm}}
\begin{tikzpicture}[every node/.style={black,above right, inner sep=1pt}]

\path[fill=blue!10] (-1.25,-1.25) rectangle (1.25cm,1.25cm);

\draw[name path=line11, double=black, white, thick] (0,0) -- (1.25,.75) node [right] {$H^1_{12}$};
\draw[name path=line12, double=black, white, thick] (0,0) -- (1.25,-1) node [right] {$H^1_{23}$};
\draw[name path=line13, double=black, white, thick] (0,0) -- (-1.25,0) node [left] {$H^1_{13}$};

\draw[name path=line21, double=blue!80, white, thick] (.25,-1) -- (-1.25,1.25) node [above] {\textcolor{blue!80}{$H^2_{12}$}};
\draw[name path=line22, double=blue!80, white, thick] (.25,-1) -- (.75,1.25) node [above] {\textcolor{blue!80}{$H^2_{23}$}};
\draw[name path=line23, double=blue!80, white, thick] (.25,-1) -- (.3,-1.25) node [below] {\textcolor{blue!80}{$H^2_{13}$}};

\fill[name intersections={of=line11 and line12,total=\t}, draw=white, thick] {(intersection-1) circle (1.5pt) node {}};
\fill[name intersections={of=line21 and line22,total=\t}, blue!80, draw=white, thick] {(intersection-1) circle (1.5pt) node {}};

\foreach \i in {1,2,3}{
	\foreach \j in {1,2,3}{
       \fill[name intersections={of={line2\i} and {line1\j}, total=\t}, red!80, draw=white, thick][]
           \ifnum\t=0
           {};
           \else
        	\foreach \s in {1,...,\t}{(intersection-\s) circle (1.5pt) node {} } ;
         \fi
   	}
}
\end{tikzpicture}
&
\begin{tikzpicture}[inner sep=1pt]
\node (zero) at (0,-1) {$\mathbb{R}^2$};
\node (H112) at (-.5,0) {$H^1_{12}$};
\node (H113) at (-1.5,0) {$H^1_{13}$};
\node (H123) at (-2.5,0) {$H^1_{23}$};

\node (H212) at (2.5,0) {\textcolor{blue!80}{$H^2_{12}$}};
\node (H213) at (1.5,0) {\textcolor{blue!80}{$H^2_{13}$}};
\node (H223) at (.5,0) {\textcolor{blue!80}{$H^2_{23}$}};

\node (H10) at (-2,1) {\textcolor{black}{$\bullet$}}; %
\node (H20) at (2,1) {\textcolor{blue!80}{$\bullet$}}; %

\node (H113-H212) at (1,1) {\textcolor{red}{$\bullet$}};%
\node (H123-H223) at (-1,1) {\textcolor{red}{$\bullet$}};%
\node (H112-H223) at (0,1) {\textcolor{red}{$\bullet$}};%

\draw (zero) -- (H112) -- (H10) -- (H113) -- (zero) -- (H123) -- (H10);
\draw (zero) -- (H212) -- (H20) -- (H213) -- (zero) -- (H223) -- (H20);
\draw (H113) -- (H113-H212) -- (H212);
\draw (H123) -- (H123-H223) -- (H223);
\draw (H112) -- (H112-H223) -- (H223);
\end{tikzpicture}
\end{tabularx}\vspace{-3mm}
\caption{Shown is an arrangement of two maxout units of ranks $k_1=k_2=3$ on $\mathbb{R}^2$ along with its intersection poset discussed in Example~\ref{ex:0} and \ref{ex:1}.
}
    \label{fig:posetcounting}
\end{figure}

\begin{example}\label{ex:0}
  Figure~\ref{fig:posetcounting} shows a maxout arrangement of two rank-$3$ maxout units and their intersection poset. The black indecision boundaries arise from $z_1(x,y)=\max\{2y,x+y+1,2\}$, while the blue indecision boundaries arise from $z_2(x,y)=\max\{0,3x+2y,5x+y\}$.
\end{example}

Recall that the M\"obius function of a poset $L$ with partial order $\leq$ is defined by $\mu_L(s,s) = 1$ for $s\in L$,
$\mu_L(s,u) =  -\sum_{s\leq t< u} \mu_L(s,t)$ for $s<u\in L$,
and $\mu_L(s,u) = 0$ for $s\not\leq u$.
If the poset has a minimal element $\hat 0$, one also defines $\mu_{L}(x):=\mu_{L}(\hat 0,x)$.
The M\"obius inversion formula for locally finite posets states %
the following %
equivalence for functions $g$ and $h$ on the poset \cite{Rota}:
\begin{align*}
g(t) = \sum_{s\leq t} h(s) \quad \forall t\in L \quad\text{if and only if}\quad
h(t) = \sum_{s\leq t} g(s) \mu_L(s,t) \quad \forall t\in L.
\end{align*}

Further, recall that if a space $X$ is suitably decomposed into cells with $f_s$ cells of dimension $s$, then its Euler characteristic is defined as $\psi(X) := f_0 - f_1 + f_2 \mp \cdots$, whereby we follow the notation of \cite{Stanley04anintroduction}.
Concretely, a polyhedral decomposition or a CW complex decomposition with finitely many pieces are suitable for computing the Euler characteristic; see \cite[Chapter 4 Proposition 2.2]{1998tame}.
The Euler characteristic is independent of the specific decomposition.

A (closed) face of the arrangement $\mathcal{A}$ is a set of the form $\emptyset\neq F = \overline{R}\cap x$, where $\overline{R}$ is the closure of a connected component $R$ of $X\setminus\cup_{H\in\A}H$, and $x\in L(\mathcal{A})$. Denote the set of faces of $\mathcal{A}$ by $\mathcal{F}(\mathcal{A})$.
The faces of an arrangement $\mathcal{A}$ in $X$ create a decomposition $X = \sqcup_{F\in F(\mathcal{A})}\operatorname{relint}(F)$.

\begin{definition}[Proper arrangement]
\label{def:properArrangement}
We call an arrangement $\A$ in a space $X$ proper if the set of faces $\mathcal{F}(\mathcal{A})$ decomposes $X$ suitably for computing the Euler characteristic.
\end{definition}

\begin{lemma}\label{lem:properArrangements}
A maxout arrangement in $\mathbb{R}^n$ is always proper.
A central maxout arrangement in $\mathbb{R}^{n+1}$ restricted to $\mathbb{S}^n$ is proper whenever the associated Newton polytope in $\mathbb{R}^{n+1}$ has dimension $n+1$.
\end{lemma}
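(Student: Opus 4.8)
The plan is to verify in each case that the faces of the arrangement form a finite CW (in fact polyhedral) decomposition of the ambient space, since by the cited result \cite[Chapter 4 Proposition 2.2]{1998tame} any such decomposition computes the Euler characteristic. For the first statement, a maxout arrangement in $\RR^n$ is, by definition, a finite collection of subsets $H^i_{ab}$ of the form $\{x : A_{i,a}(x)=A_{i,b}(x)=\max_c A_{i,c}(x)\}$; each such atom is a finite intersection of closed halfspaces with an affine hyperplane, hence a (closed) polyhedron. The regions of $\mathbb{R}^n\setminus\bigcup H$ and their faces are therefore obtained by intersecting finitely many polyhedra and taking closures of connected components, so the faces $\mathcal F(\mathcal A)$ are finitely many polyhedra whose relative interiors partition $\RR^n$; this is manifestly a finite polyhedral complex and hence a suitable decomposition. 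First I would spell out that each atom, and each element of $L(\mathcal A)$, is a polyhedron, then observe that the common refinement of finitely many polyhedral subdivisions of $\RR^n$ is again polyhedral, and conclude.

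For the second statement, I would pull the picture back to $\RR^{n+1}$. A central maxout arrangement on $\RR^{n+1}$ is a fan: every atom $H^i_{ab}$ is a union of cones (it is defined by linear equalities and inequalities), so $L(\mathcal A)$ consists of polyhedral cones and the faces form a fan $\Sigma$ refining the arrangement. Intersecting a complete fan with the unit sphere $\mathbb S^n$ gives a CW decomposition of $\mathbb S^n$ — each cone $\sigma$ of dimension $d$ contributes the spherically convex cell $\sigma\cap\mathbb S^n$, a closed $(d-1)$-ball — provided the fan is \emph{complete}, i.e.\ its cones cover all of $\RR^{n+1}$. The role of the hypothesis that the Newton polytope $P\subseteq\RR^{n+1}$ is full-dimensional is exactly to guarantee completeness: the normal fan of a full-dimensional polytope is complete, and the maxout arrangement (together with its regions) is precisely the coarsening of the normal fan $\mathcal N(P)$ induced by grouping the maximal cones into the regions of the arrangement, so $\Sigma$ refines $\mathcal N(P)$ and is therefore also complete. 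If $P$ were not full-dimensional, $\mathcal N(P)$ would contain a common lineality subspace and the ``fan'' would not cover the sphere in the required way (cf.\ Example~\ref{example:westmostPoints}(2)), so the hypothesis cannot be dropped.

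The main obstacle is the bookkeeping in the second case: one must check that the pieces $\overline R\cap x$ for $R$ a region of $\mathbb S^n\setminus\bigcup H$ and $x\in L(\mathcal A)$ really are closed spherical balls meeting only along lower-dimensional faces, i.e.\ that restricting a complete polyhedral fan to $\mathbb S^n$ genuinely yields a regular CW complex rather than something pathological. I would handle this by noting that a pointed cone intersected with $\mathbb S^n$ is spherically convex and homeomorphic to a closed ball of the appropriate dimension, that the lineality space (if any) is zero precisely because of full-dimensionality of $P$, and that face incidences among the cones of $\Sigma$ transfer verbatim to face incidences among the spherical cells; then appeal once more to \cite[Chapter 4 Proposition 2.2]{1998tame}. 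The remaining details — that $L(\mathcal A)$ is closed under the relevant intersections, that only finitely many cells arise — are routine given the finiteness of $\mathcal A$.
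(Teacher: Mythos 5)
Your proposal is correct and follows essentially the same route as the paper's (very terse) proof: a finite polyhedral decomposition of $\RR^n$ in the non-central case, and in the central case the fact that the spherical complex is dual to the boundary of the full-dimensional Newton polytope --- your completeness-and-pointedness argument for the normal fan is just the fan-theoretic phrasing of that duality. One minor correction that does not affect the argument: the regions of the central arrangement are exactly the open maximal cones of $\mathcal{N}(P)$ for $P=\sum_i P_i$ (the common refinement of the units' normal fans), not a proper coarsening obtained by grouping them.
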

\begin{proof}
The maxout arrangement creates a polyhedral decomposition of $\mathbb{R}^n$.
For a central arrangement with full-dimensional polytope, the spherical cell complex is equivalent to the boundary of the dual polytope, which is a CW complex.
\end{proof}

We obtain the following counting formula for the number of regions of an arrangement.

\begin{theorem}[Number of faces of maxout arrangements]
\label{thm:posetcounting}
Consider a proper arrangement $\mathcal{A}$ in a space $X$ (e.g.\ $\mathbb{R}^n$ or $\mathbb{S}^n$). Then the number of regions defined by $\mathcal{A}$ on $X$ satisfies
$$
r(\mathcal{A}) = (-1)^{\dim(X)} \sum_{y\in L(\mathcal{A})} \psi(y) \mu_{L(\mathcal{A})}(y).
$$
Moreover, writing $L(\A)/x = \{y\in L(\A) \colon y\geq x\}$, the number of $s$-faces satisfies
$$
f_s(\A)
= \sum_{\substack{x\in L(\A)\\\dim(x)=s}}(-1)^{\dim(x)}\sum_{y\in L(\A)/x} \psi(y)\mu_{L(\A)/x}(y)
, \quad s=0,\ldots, \dim(X)-1.
$$
\end{theorem}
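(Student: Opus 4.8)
The plan is to mimic Zaslavsky's original argument, replacing the characteristic polynomial by the M\"obius sum weighted by Euler characteristics, and using M\"obius inversion on the intersection poset $L(\A)$. First I would fix $x\in L(\A)$ and consider the subset $\mathcal{F}_x(\A)\subseteq\mathcal{F}(\A)$ of faces whose relative interior lies inside $x$ but not inside any proper intersection $y>x$; equivalently, these are the faces of the induced arrangement $\A_x$ of the hyperplanes $H\in\A$ with $x\not\subseteq H$, restricted to $x$ (those $H$ not containing $x$ cut $x$ into regions, while those containing $x$ contribute nothing new locally). The key bookkeeping identity is that for each $y\geq x$, the faces of $\A$ whose relative interior meets $\operatorname{relint}(y)$ are exactly the regions of the induced arrangement on $y$, so summing the Euler characteristic contributions over all $y\geq x$ and using that $\{\operatorname{relint}(y):y\in L(\A)/x\}$ partitions $x$ gives $\psi(x)=\sum_{y\in L(\A)/x}(\text{``open region count'' of }\A_y)$. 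This is the analogue of the fact that a hyperplane arrangement restricted to a flat decomposes it, and it is here that properness of $\A$ (Definition~\ref{def:properArrangement}) is used, so that Euler characteristics add over the cell decomposition.

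Next I would relate the ``open region count'' of $\A_y$ to the ordinary region count $r(\A_y)$. For an arrangement inside a $d$-dimensional piece $y$, the faces of top dimension $d$ are the $r(\A_y)$ open regions, and a standard Euler-characteristic computation (the relative-interior decomposition of $y$ together with inclusion-exclusion on the closed faces, exactly as in Zaslavsky) yields that the alternating count of faces through a point of $\operatorname{relint}(y)$ equals $(-1)^{d}\,r(\A_y)$ up to the sign conventions; combined with the displayed decomposition $\psi(x)=\sum_{y\geq x}(\cdots)$ this gives a triangular system $\psi(x)=\sum_{y\in L(\A)/x}(-1)^{\dim(y)} \tilde r(y)$ for a suitably normalized region function $\tilde r$. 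Applying the M\"obius inversion formula stated in the excerpt (with $g(\cdot)=\psi$ and $h(\cdot)=(-1)^{\dim(\cdot)}\tilde r$, on the poset $L(\A)/x$ whose minimum is $x$) inverts this to $(-1)^{\dim(x)}\tilde r(x)=\sum_{y\in L(\A)/x}\psi(y)\mu_{L(\A)/x}(y)$. Taking $x=\hat 0=X$ gives the first formula for $r(\A)$; taking $x$ to range over all $s$-dimensional elements and summing gives the face formula for $f_s(\A)$, since every $s$-face of $\A$ lies in $\operatorname{relint}$ of a unique $x\in L(\A)$ with $\dim(x)=s$ and is a region of $\A_x$.

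The main obstacle I anticipate is the careful treatment of the local structure at a point of $\operatorname{relint}(y)$ for \emph{maxout} arrangements, as opposed to hyperplane arrangements: the atoms $H^i_{ab}$ are only \emph{portions} of hyperplanes (cut out by the extra inequalities $A_{i,a}(x)=A_{i,b}(x)=\max_c A_{i,c}(x)$), so the elements of $L(\A)$ can be polyhedra rather than affine subspaces and need not be contractible, which is precisely why the Euler characteristic $\psi(y)$ rather than a simple dimension count appears. I would need to verify that, locally near a relative-interior point of $y$, the induced arrangement still behaves like an honest (affine) arrangement of the hyperplanes spanned by the atoms through that point — i.e.\ that passing to a small ball kills the global inequality constraints — so that $r(\A_y)$ and the local Euler-characteristic identity make sense; properness of $\A$ and of its restrictions is exactly the hypothesis that licenses this. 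A secondary technical point is handling the case $X=\mathbb{S}^n$, where the sign $(-1)^{\dim(X)}$ and the Euler characteristic of the sphere interact, but Lemma~\ref{lem:properArrangements} together with the dual-polytope description reduces this to the full-dimensional polytope case and the same inversion goes through.
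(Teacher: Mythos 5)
Your plan is correct and follows essentially the same route as the paper: the key identity $\psi(x)=\sum_{y\in L(\A)/x}(-1)^{\dim(y)}r(\A^y)$ (obtained from the fact that each $k$-face of the arrangement restricted to $x$ is the closure of a region of the induced arrangement $\A^y$ on exactly one $y\geq x$ with $\dim(y)=k$, which is where properness enters), followed by M\"obius inversion on $L(\A)/x$ and then specializing $x=X$ for $r(\A)$ or summing over the $s$-dimensional $x$ for $f_s(\A)$, is precisely the paper's argument.
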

This result is an instance of what Zaslavsky calls a fundamental theorem of dissection theory \cite[Theorem~1.2]{ZASLAVSKY1977267}.
We note that if $\mathcal{A}$ is an arrangement of hyperplanes in $\mathbb{R}^n$, then each $y\in L(\mathcal{A})$ is an affine subspace with Euler characteristic $\psi(y) = (-1)^{\dim(y)}$ and the statement of Theorem~\ref{thm:posetcounting} corresponds to Zaslavsky's result \cite[Theorem~A]{zaslavsky1975facing}, which states that for hyperplane arrangements
$r(\mathcal{A})
=(-1)^n \sum_{x\in L(\mathcal{A})}(-1)^{\dim(x)} \mu_{L(\mathcal{A})}(x) =(-1)^n \chi_{L(\mathcal{A})}(-1)$. Here, the characteristic polynomial of $L(\mathcal{A})$ is defined as $\chi_{L(\mathcal{A})}(t) := \sum_{x\in L(\mathcal{A})} \mu_{L(\mathcal{A})}(x) t^{\dim(x)}$, $t\in\mathbb{R}$. %

\begin{proof}[Proof of Theorem~\ref{thm:posetcounting}]
The proof follows the Eulerian method, i.e.\ the arguments of Zaslavsky's quick proofs in \cite{zaslavsky1975facing}.
Consider an arrangement $\mathcal{A}$ in a space $X$.
For any $y\in L(\mathcal{A})$, denote the arrangement induced by $\mathcal{A}$ on $y$ as
$\mathcal{A}^y := \{y\cap H\neq \emptyset \colon H\in \mathcal{A}, H\not\supseteq y\}$.
Now, every $k$-face of $\mathcal{A}$ is %
closure of a region of exactly one $\mathcal{A}^y$, $y\in L(\mathcal{A})$, $\dim(y)=k$.
Hence we have
\begin{equation}
f_k(\mathcal{A}) = \sum_{\substack{y\in L(\mathcal{A})\\ \dim(y)=k}}r(\mathcal{A}^y).
\label{eq:kfaces}
\end{equation}
Hence, if $\A$ %
is proper, then we have
$\psi(X) = f_0(\mathcal{A}) - f_1(\mathcal{A}) \pm %
\cdots
= \sum_{y\in L(\mathcal{A})} (-1)^{\dim(y)} r(\mathcal{A}^y)$.
We have an analogous expression for any $x\in L(\mathcal{A})$ in place of $X$.
Note that if $\mathcal{A}$ is proper, then so is $\mathcal{A}^y$ for any $y\in L(\mathcal{A})$.
Hence,
$$
\psi(x)%
= \sum_{\substack{y\in L(\mathcal{A})\\ y\geq x}} (-1)^{\dim(y)} r(\mathcal{A}^y)\quad \forall x\in L(\mathcal{A}).
$$
The M\"obius inversion formula then gives
$$
(-1)^{\dim(x)} r(\mathcal{A}^x) = \sum_{\substack{y\in L(\mathcal{A})\\ y\geq x}} \psi(y) %
\mu_{L(\mathcal{A})}(x,y) \quad \forall x\in L(\mathcal{A}).
$$
Substituting $x=X$ gives
$(-1)^{\dim(X)} r(\mathcal{A}) = \sum_{y\in L(\mathcal{A})} \psi(y) %
\mu_{L(\mathcal{A})}(y)$.
This completes the proof of the first statement.
The second statement is by \eqref{eq:kfaces} and applying the first statement to each element of the intersection poset of dimension $s$.
\end{proof}

\begin{example}
\label{ex:1}
We illustrate Theorem~\ref{thm:posetcounting}.
We consider the arrangement of two maxout units of ranks $k_1=k_2=3$ in $\mathbb{R}^n$, $n=2$, shown in Figure~\ref{fig:posetcounting}.
In this example
$\mu(\mathbb{R}^2)=1$,
$\mu(H^i_{ab})=-1$,
$\mu(\bullet)=2$,
$\mu(\textcolor{blue!80}{\bullet})=2$,
$\mu(\textcolor{red}{\bullet})=1$,
and
$\psi(\mathbb{R}^2)=(-1)^2=1$,
$\psi(H^i_{ab})=1-1 = 0$,
$\psi(\bullet)= \psi(\textcolor{blue!80}{\bullet})= \psi(\textcolor{red}{\bullet})=(-1)^0=1$.
By Theorem~\ref{thm:posetcounting}, the number of regions and $1$-faces are
\begin{align*}
r(\mathcal{A}) & = \sum_{y\in L(\mathcal{A})} \psi(y)\mu_{L(\mathcal{A})}(y) = 1 + 0 (-1\!-\!1\!-\!1\!-\!1\!-\!1\!-\!1) + (-1)^0 (2\!+\!2\!+\!1\!+\!1\!+\!1) = 8 ,\\
f_1(\A) & = \sum_{\substack{x\in L(\A)\\ \dim(x)=1}}(-1)^{\dim(x)}\sum_{y\in L(\A)/x} \psi(y)\mu_{L(\A)/x}(y)\\ & =(-1)^1(\underbracket[0.5pt]{(0\!-\!1 \!-\!1)}_{x=H^1_{12}} + \underbracket[0.5pt]{(0\!-\!1\!-\!1)}_{x=H^1_{13}} + \underbracket[0.5pt]{(0\!-\!1\!-\!1)}_{x=H^1_{23}} + \underbracket[0.5pt]{(0\!-\!1\!-\!1)}_{x=H^2_{12}} + \underbracket[0.5pt]{(0\!-\!1)}_{x=H^2_{13}} + \underbracket[0.5pt]{(0\!-\!1\!-\!1\!-\!1)}_{x=H^2_{23}}) = 12 .
\end{align*}
\end{example}

For a general maxout arrangement, the Euler characteristic of the elements of the intersection poset may depend not only on the dimension.
Besides the case of hyperplane arrangements, we note another special case where it depends only on the dimension.
For central arrangements of full-dimensional maxout units in $\mathbb{R}^{n+1}$ (i.e.\ each of the polytopes $P_1,\ldots, P_m$ has dimension $n+1$), each $y\in L(\A)\setminus\{\mathbb{R}^{n+1}, \{0\}\}$ is an unbounded pointed cone with $\psi(y)=0$ (this can be deduced from the fact that any bounded convex polytope has Euler characteristic $1$).
Hence, noting that $\psi(\mathbb{R}^{n+1})=(-1)^{n+1}$ and $\psi(\{0\})=1$, we may express the number of regions of such an $\A$ in terms of the characteristic polynomial
$\chi_{L(\A)}(t) = \sum_{y\in L(\A)} \mu_{L(\A)}(y) t^{\dim(y)}$ as follows.
\begin{proposition}
For a central arrangement $\A$ of full-dimensional maxout units in $\mathbb{R}^{n+1}$,
$r(\A)
= 1 + (-1)^{n+1}  \chi_{L(\A)}(0)$.
\end{proposition}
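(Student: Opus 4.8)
The plan is to invoke Theorem~\ref{thm:posetcounting} with $X=\mathbb{R}^{n+1}$ and then collapse the poset sum using the topological remarks made just before the proposition. A central maxout arrangement is in particular a maxout arrangement, so by Lemma~\ref{lem:properArrangements} it is proper in $\mathbb{R}^{n+1}$, and Theorem~\ref{thm:posetcounting} gives
\[
r(\A)=(-1)^{n+1}\sum_{y\in L(\A)}\psi(y)\,\mu_{L(\A)}(y).
\]
Since every atom of $\A$ has codimension $1$, the only full-dimensional element of $L(\A)$ is $\hat 0=\mathbb{R}^{n+1}$, with $\mu_{L(\A)}(\mathbb{R}^{n+1})=1$ and $\psi(\mathbb{R}^{n+1})=(-1)^{n+1}$; and $\{0\}\in L(\A)$, being the minimal cone of the complete pointed normal fan of the full-dimensional Newton polytope $\sum_i P_i$, with $\psi(\{0\})=1$.

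Next I would show that every remaining $y\in L(\A)\setminus\{\mathbb{R}^{n+1},\{0\}\}$ contributes $0$. Such a $y$ is a nonempty intersection of codimension-$1$ polyhedral cones through the origin taken from the normal fans of the (full-dimensional) summands, hence it is itself a polyhedral cone through the origin of dimension $1\le\dim y\le n$, and it is pointed because each of those fans, and any common refinement, is pointed. A pointed cone $y$ of dimension $d\ge 1$ is the cone over a nonempty bounded polytope $Q$ of dimension $d-1$ (intersect $y$ with an affine hyperplane meeting it strictly positively); decomposing $y$ into its apex together with the relative interiors of the cones over the nonempty faces of $Q$ yields $\psi(y)=1-\psi(Q)=1-1=0$. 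Substituting back leaves only two surviving terms:
\[
r(\A)=(-1)^{n+1}\bigl((-1)^{n+1}\cdot 1+1\cdot\mu_{L(\A)}(\{0\})\bigr)=1+(-1)^{n+1}\mu_{L(\A)}(\{0\}).
\]

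To conclude, I would identify $\mu_{L(\A)}(\{0\})$ with $\chi_{L(\A)}(0)$: in $\chi_{L(\A)}(t)=\sum_{y\in L(\A)}\mu_{L(\A)}(y)\,t^{\dim y}$ every term with $\dim y\ge 1$ vanishes at $t=0$, while the unique element of dimension $0$ is $\{0\}$, since all elements of $L(\A)$ are cones through the origin. Hence $\chi_{L(\A)}(0)=\mu_{L(\A)}(\{0\})$, and substitution gives $r(\A)=1+(-1)^{n+1}\chi_{L(\A)}(0)$.

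The main obstacle, though a mild one, is the vanishing $\psi(y)=0$ for the intermediate cones: this relies on full-dimensionality of the $P_i$ forcing all the relevant cones to be pointed, and on the combinatorial Euler characteristic of a nonempty bounded polytope being $1$. Both facts are already flagged in the discussion preceding the proposition, so beyond verifying them the argument is routine Möbius-function bookkeeping.
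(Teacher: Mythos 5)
Your argument is correct and coincides with the paper's own justification, which is given in the paragraph preceding the proposition: apply Theorem~\ref{thm:posetcounting}, observe that every $y\in L(\A)\setminus\{\mathbb{R}^{n+1},\{0\}\}$ is a pointed cone with $\psi(y)=0$ (via the cone-over-a-bounded-polytope decomposition and $\psi(Q)=1$), and identify the surviving term $\mu_{L(\A)}(\{0\})$ with $\chi_{L(\A)}(0)$. You merely spell out the pointedness and Euler-characteristic computations that the paper leaves as remarks.
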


\section{Region counting by regions of sub-arrangements}
\label{sec:Weibel-Zaslavsky}

Our objective in this section is to write %
the number of regions $r(\mathcal{A})$ of an arrangement $\mathcal{A}$
in terms of the numbers of regions of small sub-arrangements.
To this end we will focus on simple arrangements, which arise from maxout units with generic parameters, and combine the results from Sections~\ref{sec:Weibel} and \ref{sec:Zaslavsky}.
As before, we will assume that each maxout unit has rank at least two.
To draw a bridge between the upper vertices and all vertices of Minkowski sums of polytopes, we consider both non-central and central arrangements.

\begin{definition}[Simple arrangements]
    Let $\mathcal{A}=\{H^{i}_{ab}\}_{i\in [m],\{a,b\}}$ be a maxout arrangement.
    For any $S\subseteq[m]$, write $\mathcal{A}_S=\{H^{i}_{ab}\}_{i\in S,\{a,b\}}$ for the sub-arrangement of atoms of units $i\in S$.

  A maxout arrangement $\A$ is simple, or in general position, if the intersection of any $j$ atoms of different units is either empty or has co-dimension $j$.
In the case of a central arrangement, all atoms contain the origin $0$.
A central maxout arrangement $\A$ is simple if the intersection of any $j$ atoms of different units is either the origin or has co-dimension $j$.

If $\A$ is simple, we define the support of an element $y=\cap_{l=1}^r H^{i_l}_{a_l b_l}\in L(\mathcal A)$, with $y\neq 0$ if $\A$ is central,  as the set $\{i_1,\dots,i_r\}\subseteq [m]$.
Note that while $y$ may not have a unique representation as intersection of atoms, the support is well-defined if $\A$ is simple and $y\neq 0$ if $\A$ central.
\end{definition}

\subsection{Simple non-central arrangements} %
Simple non-central arrangements have the convenient property that %
an element $y\in L(\A)$ of the intersection poset is %
contained in the intersection poset $L(\mathcal A_S)$ of a sub-arrangement $\mathcal{A}_S$ if and only if the support of $y$ is contained in $S$.
This mimics the behavior of westernmost corners in Lemma~\ref{lemma:sequence}, but without the need of a coordinate system and local optimization.
We obtain a formula similar to Theorem~\ref{thm:f_vectors_upper_part_Minkowski_sum} for upper vertices with a short proof based on Theorem~\ref{thm:posetcounting}.

\begin{theorem}[Number of regions of a simple non-central arrangement]
\label{thm:facessimple}
Let $\A$ be a simple arrangement of $m\geq n+1$ maxout units of ranks $k_1,\ldots, k_m\geq 2$ in $\mathbb{R}^n$. Then %
\begin{align*}
r(\mathcal{A}) = \sum_{j=0}^{n} (-1)^{n-j}{m-1-j\choose n-j} \sum_{S\in {[m]\choose j}}  r(\mathcal{A}_S).
\end{align*}
\end{theorem}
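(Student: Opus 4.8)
The plan is to mimic the proof of Theorem~\ref{thm:f_vectors_upper_part_Minkowski_sum}, replacing westernmost corners of cells of the spherical upper complex with elements of the intersection poset $L(\mathcal{A})$, and replacing the combinatorial bookkeeping there (Lemmas~\ref{lemma:sequence} and~\ref{lemma:inclusion-exclusion}) with the analogous statement that an element $y\in L(\mathcal{A})$ lies in $L(\mathcal{A}_S)$ precisely when its support is contained in $S$. The key computation is an inclusion-exclusion over the poset using Theorem~\ref{thm:posetcounting}.

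First I would note that since $\mathcal{A}$ is simple and non-central, every element $y\in L(\mathcal{A})$ has a well-defined support $\Supp(y)\subseteq[m]$ with $0\leq|\Supp(y)|\leq n$ (the last bound because $y$ is nonempty of codimension $|\Supp(y)|$ inside $\mathbb{R}^n$, forcing $|\Supp(y)|\le n$). Moreover for any $S\subseteq[m]$ the poset $L(\mathcal{A}_S)$ is exactly $\{y\in L(\mathcal{A}) : \Supp(y)\subseteq S\}$, and the principal order filter $L(\mathcal{A})/y$ and the M\"obius and Euler-characteristic data attached to $y$ agree whether $y$ is viewed inside $L(\mathcal{A})$ or inside $L(\mathcal{A}_S)$ — this is because, $\mathcal{A}$ being simple, the arrangement induced on $y$ depends only on the atoms not containing $y$, all of which survive in $\mathcal{A}_S$ once $\Supp(y)\subseteq S$. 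Applying Theorem~\ref{thm:posetcounting} to each $\mathcal{A}_S$ gives
\[
r(\mathcal{A}_S) = (-1)^n \sum_{y\in L(\mathcal{A}_S)} \psi(y)\,\mu_{L(\mathcal{A}_S)}(y) = (-1)^n\sum_{\substack{y\in L(\mathcal{A})\\ \Supp(y)\subseteq S}} \psi(y)\,\mu_{L(\mathcal{A})}(y).
\]

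Next I would substitute this into the right-hand side of the claimed identity, interchange the order of summation (sum over $y\in L(\mathcal{A})$ first, then over $S$ with $\Supp(y)\subseteq S$ and $|S|=j$), and use that the number of such $S$ of size $j$ is $\binom{m-|\Supp(y)|}{j-|\Supp(y)|}$. This turns the right-hand side into
\[
(-1)^n\sum_{y\in L(\mathcal{A})} \psi(y)\,\mu_{L(\mathcal{A})}(y) \sum_{j=0}^n (-1)^{n-j}\binom{m-1-j}{n-j}\binom{m-|\Supp(y)|}{j-|\Supp(y)|}.
\]
By Lemma~\ref{lemma:inclusion-exclusion} (applied with $r=|\Supp(y)|\le n$), the inner sum over $j$ equals $1$ for every $y$, so the whole expression collapses to $(-1)^n\sum_{y\in L(\mathcal{A})}\psi(y)\mu_{L(\mathcal{A})}(y) = r(\mathcal{A})$ by Theorem~\ref{thm:posetcounting} again.

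The main obstacle I anticipate is the bookkeeping lemma asserting that $L(\mathcal{A}_S) = \{y\in L(\mathcal{A}):\Supp(y)\subseteq S\}$ together with the compatibility of $\mu$ and $\psi$ across sub-arrangements — that is, verifying that $\mu_{L(\mathcal{A}_S)}(y)=\mu_{L(\mathcal{A})}(y)$ and that $\psi(y)$ does not change. The support is well-defined only because $\mathcal{A}$ is simple (this is exactly where genericity enters, matching the general-orientation hypothesis in Theorem~\ref{thm:f_vectors_upper_part_Minkowski_sum}), and one must check that the induced arrangement $\mathcal{A}^y$ and hence the order filter above $y$ are genuinely the same in $\mathcal{A}$ and in $\mathcal{A}_S$ whenever $\Supp(y)\subseteq S$; this is the analogue of Lemma~\ref{lemma:sequence}. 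Once that is in place, the remainder is the purely formal manipulation above, with Lemma~\ref{lemma:inclusion-exclusion} doing the combinatorial heavy lifting exactly as in the proof of Theorem~\ref{thm:f_vectors_upper_part_Minkowski_sum}.
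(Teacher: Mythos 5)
Your proposal is correct and matches the paper's proof essentially verbatim: the paper likewise combines Theorem~\ref{thm:posetcounting} applied to each $\mathcal{A}_S$ with the observations that, by simplicity, every $y\in L(\mathcal{A})$ has support of size at most $n$, that $y\in L(\mathcal{A}_S)$ if and only if $\Supp(y)\subseteq S$, and that $\mu_{L(\mathcal{A}_S)}(y)=\mu_{L(\mathcal{A})}(y)$, then collapses the double sum via Lemma~\ref{lemma:inclusion-exclusion}. The only cosmetic difference is that you expand the right-hand side toward the left while the paper inserts the inclusion–exclusion identity into the left-hand side and swaps sums.
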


\begin{proof}[Proof of Theorem~\ref{thm:facessimple}]
If $\A$ is simple, then any element in $L(\mathcal{A})$ can be written as an intersection of at most $n$ atoms.
Moreover, any $y\in L(\A)$ is included in $L(\mathcal A_S)$ if and only if its support is contained in $S$.
Hence, using Lemma~\ref{lemma:inclusion-exclusion} %
and the fact that $\mu_{L(\mathcal{A})}(y) = \mu_{L(\mathcal{A}_S)}(y)$ for any $y\in L(\mathcal{A}_S)$, we can rewrite the expression from Theorem~\ref{thm:posetcounting} as follows:
\allowdisplaybreaks
\begin{align}\label{eq:nonCentralRegions}
    r(\mathcal{A}) &\textoverset[O]{Thm.~\ref{thm:posetcounting}}{=} (-1)^n \sum_{y\in L(\mathcal{A})}\psi(y)\mu_{L(\mathcal{A})}(y)\nonumber \\
    &\textoverset[O]{Lem.~\ref{lemma:inclusion-exclusion}}{=} (-1)^n \sum_{y\in L(\mathcal{A})}\psi(y)\mu_{L(\mathcal{A})}(y) \sum_{j=0}^n (-1)^{n-j} \binom{m-1-j}{n-j} \sum_{S\in {[m] \choose j}} \mathbb{1}_{y\in L(\A_S)}\nonumber \\
    &\textoverset[O]{}{=} (-1)^n \sum_{j=0}^{n} (-1)^{n-j}{m-1-j\choose n-j} \sum_{S\in {[m]\choose j}}  \sum_{y\in L(\mathcal{A}_S)} \psi(y)\mu_{L(\mathcal{A}_S)}(y) \\
    &\textoverset[O]{Thm.~\ref{thm:posetcounting}}{=} \sum_{j=0}^{n} (-1)^{n-j}{m-1-j\choose n-j} \sum_{S\in {[m]\choose j}}  r(\mathcal{A}_S).\nonumber \qedhere
\end{align}
\end{proof}

\subsection{Simple central arrangements}
Simple central arrangements come with the added challenge that the origin $\{0\}= \hat 1\in L(\mathcal{A})$ may be contained in several posets $L(\mathcal{A}_S)$ with $S\subseteq [m]$, $S\neq \emptyset$, while not being contained in the poset corresponding to the intersection of these $S$.
See Figure~\ref{fig:originInCentralArrangements}.
Consequently, there is no obvious way to define a support for $\{0\}$ that is consistent with %
Lemma~\ref{lemma:sequence}, which in turn makes it difficult to apply Lemma~\ref{lemma:inclusion-exclusion}. We thus need %
to treat $\{0\}$ separately.
We consider central arrangements in $\RR^{n+1}$, corresponding to the ambient space of the lifted Newton polytopes of maxout networks with input space~$\RR^n$.

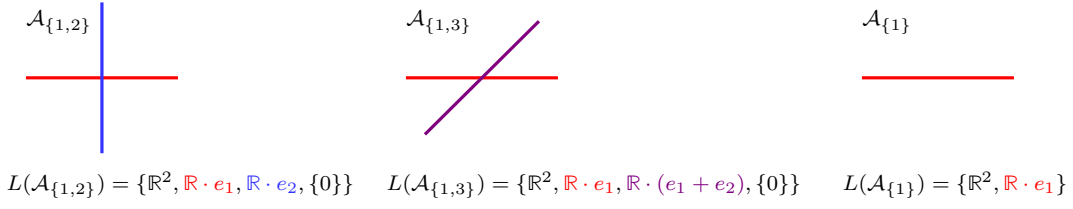
\begin{figure}
    \centering
    \begin{tikzpicture}[font=\footnotesize]
      \node (A12) at (0,0)
      {%
        \begin{tikzpicture}
          \useasboundingbox (-1,-1) rectangle (1,1);
          \draw[very thick,red] (-1,0) -- (1,0);
          \draw[very thick,blue!80] (0,-1) -- (0,1);
          \node[anchor=north west, font=\footnotesize] at (-1.1,1) {$\A_{\{1,2\}}$};
        \end{tikzpicture}
      };
      \node[anchor=north west,xshift=-2.5mm] (A12Text) at (A12.south west)
      {$L(\A_{\{1,2\}})=\{\RR^2,\textcolor{red}{\RR\cdot e_1},\textcolor{blue!80}{\RR\cdot e_2},\{0\}\}$};
      \node (A13) at (5,0)
      {%
        \begin{tikzpicture}
          \useasboundingbox (-1,-1) rectangle (1,1);
          \draw[very thick,red] (-1,0) -- (1,0);
          \draw[very thick,violet] (-0.75,-0.75) -- (0.75,0.75);
          \node[anchor=north west, font=\footnotesize] at (-1.1,1) {$\A_{\{1,3\}}$};
        \end{tikzpicture}
      };
      \node[anchor=north west,xshift=-2.5mm] (A13Text) at (A13.south west)
      {$L(\A_{\{1,3\}})=\{\RR^2,\textcolor{red}{\RR\cdot e_1},\textcolor{violet}{\RR\cdot (e_1+e_2)},\{0\}\}$};
      \node (A1) at (11,0)
      {%
        \begin{tikzpicture}
          \useasboundingbox (-1,-1) rectangle (1,1);
          \draw[very thick,red] (-1,0) -- (1,0);
          \node[anchor=north west, font=\footnotesize] at (-1.1,1) {$\A_{\{1\}}$};
        \end{tikzpicture}
      };
      \node[anchor=north west,xshift=-2.5mm] (A1Text) at (A1.south west)
      {$L(\A_{\{1\}})=\{\RR^2,\textcolor{red}{\RR\cdot e_1}\}$};
    \end{tikzpicture}\vspace{-3mm}
    \caption{The origin $\{0\}$ is contained in both the intersection poset of $\A_{\{1,2\}}$ and of $\A_{\{1,3\}}$, but not in the intersection poset of $\A_{\{1,2\}\cap\{1,3\}}$.
    }
    \label{fig:originInCentralArrangements}
\end{figure}
\bigskip

\paragraph{Special case}
First we present a simple approach to enumerate the regions of a central arrangement in the special case that the $\mathcal{A}_{\{i\}}$ are proper. We consider the arrangement $\A\cap\mathbb{S}^n$, which for simplicity of notation we will still denote $\A$. For a central arrangement in $\mathbb{R}^{n+1}$, the number of regions it defines in $\mathbb{R}^{n+1}$ is equal to the number of regions it defines on $\mathbb{S}^n$.
Conveniently, $\{0\}$ no longer appears in the posets over $\mathbb{S}^n$. For a central arrangement of $m\geq n+1$ 
units of ranks $k_1,\ldots, k_m\geq 2$, as before in~\eqref{eq:nonCentralRegions},
\begin{align}
r(\mathcal{A}) = (-1)^n\!\!\sum_{y\in L(\mathcal{A})}\!\!\!\psi(y)\mu_{L(\mathcal{A})}(y)
= & (-1)^n \sum_{j=0}^{n} (-1)^{n-j}{m-1-j\choose n-j} \!\!\sum_{S\in {[m]\choose j}}\sum_{y\in L(\mathcal{A}_S)}\!\!\!\psi(y)\mu_{L(\mathcal{A}_S)}(y) .
\label{eq:centralsphere}
\end{align}
Under the additional assumption that the arrangement $\mathcal{A}_S$ on $\mathbb{S}^n$ is proper for all $S\neq\emptyset$ (e.g.\ 
the lifted Newton polytopes of all maxout units are full-dimensional), we can use Theorem~\ref{thm:posetcounting} to rewrite \eqref{eq:centralsphere}~as
 \begin{align}
r(\mathcal{A})
 = & (-1)^n{m-1\choose n} \psi(\mathbb{S}^n)  + \sum_{j=1}^{n} (-1)^{n-j}{m-1-j\choose n-j} \sum_{S\in {[m]\choose j}}  r(\mathcal{A}_S)\nonumber\\
= &  \psi(\mathbb{S}^n) + \sum_{j=1}^{n} (-1)^{n-j}{m-1-j\choose n-j} \sum_{S\in {[m]\choose j}}  ( r(\mathcal{A}_S) - \psi(\mathbb{S}^n) ),
\label{eq:centralsphererecoverWeibel}
 \end{align}
in which we also use $\sum_{y\in L(\mathcal{A}_\emptyset)}\psi(y)\mu_{L(\mathcal{A}_\emptyset)}(y) %
=\psi(\mathbb{S}^n)$ and $\sum_{j=1}^n (-1)^{n-j}{m-1-j\choose n-j}{m\choose j} = 1 - (-1)^n{m-1\choose n}$ which follows from Lemma~\ref{lemma:inclusion-exclusion}.
Since $\psi(\mathbb{S}^n)=0$ if $n$ is odd and $\psi(\mathbb{S}^n)=2$ if $n$ is even, \eqref{eq:centralsphererecoverWeibel} recovers the $k=0$ case of \cite[Theorem~1]{Weibel12}.

\bigskip

\paragraph{General case}
Next, we present an approach to handle the origin directly in $\mathbb{R}^{n+1}$, which will allow us to address the general case where the lifted Newton polytopes of the maxout units need not be full-dimensional. Our goal is to prove the following theorem:
\begin{theorem}[Number of regions of a simple central arrangement]
\label{thm:central-arbdims}
For a central simple arrangement $\A$ of $m$ maxout units of ranks $k_1,\ldots, k_m\geq 2$ in $\mathbb{R}^{n+1}$, $m\geq %
n+1$, %
\[ r(\A)=  {m-1 \choose n} + \sum_{j=0}^{n} (-1)^{n-j}{m-1-j\choose n-j} \sum_{S\in {[m]\choose j}} r(\A_S). \]
\end{theorem}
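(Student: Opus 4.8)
The plan is to run the same Zaslavsky-to-Weibel argument that proves Theorem~\ref{thm:facessimple}, but now the unique element of $L(\A)$ lacking a well-behaved support, the origin $\{0\}$, has to be peeled off and its residual contributions accounted for by a M\"obius computation. As in Theorem~\ref{thm:facessimple} we assume all ranks $\geq 2$, so $\A$ consists of codimension-$1$ atoms, and $\A$ is simple.

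\textbf{Step 1: Zaslavsky expansion and inclusion--exclusion away from $\{0\}$.} First I would apply Theorem~\ref{thm:posetcounting} to $\A$, which is a maxout arrangement in $\mathbb{R}^{n+1}$ and hence proper by Lemma~\ref{lem:properArrangements}, to write $r(\A)=(-1)^{n+1}\sum_{y\in L(\A)}\psi(y)\,\mu_{L(\A)}(y)$, and separate off the term $y=\{0\}$ (which has $\psi(\{0\})=1$). Since $\A$ is simple and $m\geq n+1$, the origin $\{0\}$ is the greatest element of $L(\A)$, and every other $y\in L(\A)$ is an intersection of at most $n$ atoms, hence has a well-defined support $\mathrm{supp}(y)\subseteq[m]$ with $|\mathrm{supp}(y)|\leq n$; moreover, exactly as in the non-central case, $y\in L(\A_S)$ iff $\mathrm{supp}(y)\subseteq S$, and $\mu_{L(\A)}(y)=\mu_{L(\A_S)}(y)$ whenever $\mathrm{supp}(y)\subseteq S$. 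Applying Lemma~\ref{lemma:inclusion-exclusion} with $\xi_S=\mathbb{1}_{\mathrm{supp}(y)\subseteq S}$ to each $y\neq\{0\}$ (legitimate since $|\mathrm{supp}(y)|\leq n<m$) rewrites $\sum_{y\neq\{0\}}\psi(y)\mu_{L(\A)}(y)$ as $\sum_{j=0}^{n}(-1)^{n-j}\binom{m-1-j}{n-j}\sum_{S\in\binom{[m]}{j}}\sum_{y\in L(\A_S),\,y\neq\{0\}}\psi(y)\mu_{L(\A_S)}(y)$, and Theorem~\ref{thm:posetcounting} applied to each (proper) $\A_S$ identifies the inner sum with $(-1)^{n+1}r(\A_S)-\mu_{L(\A_S)}(\{0\})$, where we read $\mu_{L(\A_S)}(\{0\}):=0$ if $\{0\}\notin L(\A_S)$. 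Assembling the pieces, the asserted formula becomes equivalent to the identity $\mu_{L(\A)}(\{0\})-\sum_{j=0}^{n}(-1)^{n-j}\binom{m-1-j}{n-j}\sum_{S\in\binom{[m]}{j}}\mu_{L(\A_S)}(\{0\})=(-1)^{n+1}\binom{m-1}{n}$.

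\textbf{Step 2: the residual M\"obius identity.} To prove this, the key observation is that $L(\A_S)$ always has a greatest element, namely the intersection of all atoms of $\A_S$, which equals the linear subspace $Z_S:=\bigcap_{i\in S}\{x\in\mathbb{R}^{n+1}:A_{i,1}(x)=\dots=A_{i,k_i}(x)\}$, and $Z_S=\{0\}$ precisely when this subspace is trivial. Since $L(\A_S)$ also has least element $\mathbb{R}^{n+1}$ and $Z_S\neq\mathbb{R}^{n+1}$ as soon as $S\neq\emptyset$, the standard M\"obius identity gives $\sum_{x\in L(\A_S)}\mu_{L(\A_S)}(x)=[S=\emptyset]$. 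Splitting this sum according to whether $x=\{0\}$ or $x$ has support $T\subseteq S$, and setting $h(T):=\sum_{x\in L(\A_T):\,\mathrm{supp}(x)=T}\mu_{L(\A_T)}(x)$ (a well-defined function of $T$ by the insensitivity of $\mu$ to passing to sub-arrangements), this reads $\mu_{L(\A_S)}(\{0\})=[S=\emptyset]-\sum_{T\subseteq S}h(T)$. Feeding this into the identity of Step~1, the double sum $\sum_{j}(-1)^{n-j}\binom{m-1-j}{n-j}\sum_{|S|=j}\sum_{T\subseteq S}h(T)$ collapses to $\sum_{T\subseteq[m]}h(T)$ by a second application of Lemma~\ref{lemma:inclusion-exclusion} (with $r=|T|\leq n<m$), which cancels $\mu_{L(\A)}(\{0\})=-\sum_{T\subseteq[m]}h(T)$; the only surviving contribution comes from the $[S=\emptyset]$ terms, i.e. $-(-1)^{n}\binom{m-1}{n}=(-1)^{n+1}\binom{m-1}{n}$, as required. (If one wants the closed form $\binom{m-1}{n}+\sum_{j=0}^{n}\sum_{S\in\binom{[m]}{j}}\prod_{i\in S}(k_i-1)$, apply Lemma~\ref{lemma:reformulation}, but that is not part of the present statement.)

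\textbf{Main obstacle.} The genuine difficulty, and where this differs from Theorem~\ref{thm:facessimple}, is the origin: $\{0\}$ lies in $L(\A_S)$ for many $S$ but with no support obeying the analogue of Lemma~\ref{lemma:sequence}, and $\mu_{L(\A_S)}(\{0\})$ has no clean closed form (it depends on the individual ranks). The way around it is that its value is never needed on its own: the boundedness of $L(\A_S)$ by the fan center $Z_S$ yields $\sum_{x\in L(\A_S)}\mu_{L(\A_S)}(x)=[S=\emptyset]$, trading every $\mu(\{0\})$ for the tractable quantities $h(T)$, which then telescope. A secondary technical point, handled exactly as in the non-central proof, is verifying that for a simple central arrangement a cell $y\neq\{0\}$ lies in $L(\A_S)$ precisely when $\mathrm{supp}(y)\subseteq S$ and that the relevant M\"obius values are unchanged under restriction to $\A_{\mathrm{supp}(y)}$: if some atom of a unit $i\notin\mathrm{supp}(y)$ contained $y$, then $y=y\cap H^i_{ab}$ would have two representations as an intersection of atoms with different supports, contradicting well-definedness of the support, so the interval $[\mathbb{R}^{n+1},y]$ in $L(\A)$ involves only atoms of units in $\mathrm{supp}(y)$.
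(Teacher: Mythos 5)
Your proposal is correct, and while Step~1 coincides with the paper's argument (Zaslavsky expansion via Theorem~\ref{thm:posetcounting}, peeling off $\{0\}$, and applying Lemma~\ref{lemma:inclusion-exclusion} to the supported elements $y\neq\{0\}$), your Step~2 reaches the residual identity --- which is precisely the content of the paper's Lemma~\ref{lem:central} --- by a genuinely different route. The paper computes $\mu_{L(\A)}(\{0\})$ ``from below'' using Rota's Cross-cut Theorem (Theorem~\ref{thm:cross-cut}): it enumerates subsets of atoms with join $\{0\}$, regroups those supported on at most $n$ units into the $\mu_{L(\A_S)}(\{0\})$ terms via Lemma~\ref{lemma:inclusion-exclusion}, and evaluates the contribution of subsets touching more than $n$ units by a separate binomial computation (Lemma~\ref{lem:central-3}), which is where $\binom{m-1}{n}$ emerges. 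You instead work ``from above'': since the arrangement is central, each $L(\A_S)$ with $S\neq\emptyset$ has a greatest element, so the defining recursion of the M\"obius function gives $\sum_{x\in L(\A_S)}\mu_{L(\A_S)}(x)=[S=\emptyset]$; trading each $\mu_{L(\A_S)}(\{0\})$ for the support-graded sums $h(T)$ and telescoping with a second application of Lemma~\ref{lemma:inclusion-exclusion} makes everything cancel except the $j=0$ term, which produces $(-1)^{n+1}\binom{m-1}{n}$ directly. Your version is more elementary (no Cross-cut Theorem, no analogue of Lemma~\ref{lem:central-3}), at the cost of needing the observation that central arrangements have a top element $Z_S$; the paper's cross-cut computation is more explicit about which collections of atoms are responsible for the correction term. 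Both arguments lean equally on the unproved-but-asserted facts that the support of $y\neq\{0\}$ is well defined for simple arrangements and that $\mu$ is unchanged under restriction to $\A_{\mathrm{supp}(y)}$, so you are not introducing any gap the paper does not already have.
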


To prove Theorem~\ref{thm:central-arbdims}, we will use use Theorem~\ref{thm:posetcounting} to write $r(\mathcal{A})$ as a sum of terms $\psi(y)\mu_{L(\mathcal{A})}(y)$ over $y\in L(\mathcal{A})$. For the elements $y\in L(\mathcal{A})\setminus\{0\}$ we can use similar arguments as before. To handle $\{0\}=\hat 1\in L(\mathcal{A})$ we will use
the Cross-cut Theorem:
\begin{theorem}[Cross-cut Theorem {\cite{Rota}}]\label{thm:cross-cut}
  Let $L$ be a finite lattice. Let $X$ be a subset of $L$ such that $\hat 0\not\in X$ and such that if $y\in L$, $y\neq \hat 0$, then some $x\in X$ satisfies $x\leq y$. Let $N_k$ be the number of $k$ element subsets of $X$ with join $\hat 1$. Then $\mu_L(\hat 0,\hat 1) = N_0 -N_1 + N_2 \mp \cdots$.
\end{theorem}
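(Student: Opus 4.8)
The plan is to deduce the identity directly from M\"obius inversion on $L$ itself, with subsets of $X$ entering only as bookkeeping. For each $y\in L$ I would introduce the integer
\[
a(y)\;=\;\sum_{\substack{S\subseteq X\\ \bigvee S=y}}(-1)^{|S|},
\]
where $\bigvee\emptyset:=\hat 0$ and every join $\bigvee S$ exists because $L$ is a finite lattice. The right-hand side of the theorem is exactly $N_0-N_1+N_2\mp\cdots=\sum_{k\geq 0}(-1)^k N_k=a(\hat 1)$, so it suffices to prove $a(\hat 1)=\mu_L(\hat 0,\hat 1)$; in fact I will obtain the stronger statement that $a(y)=\mu_L(\hat 0,y)$ for every $y\in L$.

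First I would sum $a$ over the principal order ideal of an arbitrary $y\in L$. Since $\bigvee S\leq y$ holds precisely when $S\subseteq X_{\leq y}:=\{x\in X\colon x\leq y\}$, one gets
\[
\sum_{z\leq y}a(z)\;=\;\sum_{S\subseteq X_{\leq y}}(-1)^{|S|}\;=\;
\begin{cases}1,& X_{\leq y}=\emptyset,\\ 0,& X_{\leq y}\neq\emptyset.\end{cases}
\]
This is the only point where the two hypotheses on $X$ are used, and it is the crux of the proof: since $\hat 0\notin X$ and $\hat 0$ is the minimum of $L$, no $x\in X$ satisfies $x\leq\hat 0$, so $X_{\leq\hat 0}=\emptyset$; and since every $y\neq\hat 0$ dominates some $x\in X$, we have $X_{\leq y}\neq\emptyset$ for $y\neq\hat 0$. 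Hence $\sum_{z\leq y}a(z)=\delta_{y,\hat 0}$ for all $y\in L$.

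Next I would invoke the M\"obius inversion formula for the finite (hence locally finite) poset $L$, recalled earlier in the paper: from $\sum_{z\leq y}a(z)=\delta_{y,\hat 0}$ it follows that $a(y)=\sum_{z\leq y}\delta_{z,\hat 0}\,\mu_L(z,y)=\mu_L(\hat 0,y)$, the sum collapsing to its single term $z=\hat 0$ (which always satisfies $z\leq y$). Taking $y=\hat 1$ yields $\mu_L(\hat 0,\hat 1)=a(\hat 1)=N_0-N_1+N_2\mp\cdots$, which is the assertion.

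I do not expect a real obstacle here; the step needing the most care is the equivalence $X_{\leq y}=\emptyset\iff y=\hat 0$, which is exactly the combined content of the two conditions imposed on $X$, together with the remark that finiteness of $L$ guarantees that all joins $\bigvee S$ (including the empty join) exist, so that $a$ is well defined on all of $L$. It is perhaps worth noting that the argument never uses meets, only the join structure restricted to $X$.
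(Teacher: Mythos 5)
Your proof is correct. Note that the paper does not prove this statement at all: it is quoted as Rota's Cross-cut Theorem and used as a black box, so there is nothing to compare against except the citation. Your argument is the standard self-contained derivation, and it only uses tools the paper already has on record, namely the M\"obius inversion formula stated in Section~5. The key steps all check out: setting $a(y)=\sum_{S\subseteq X,\ \bigvee S=y}(-1)^{|S|}$ (with $\bigvee\emptyset=\hat 0$, which exists since $L$ is a finite lattice), the identity $\sum_{z\leq y}a(z)=\sum_{S\subseteq X_{\leq y}}(-1)^{|S|}$ is justified because $\bigvee S\leq y$ holds exactly when $S\subseteq X_{\leq y}$, and this alternating sum is $\delta_{y,\hat 0}$ precisely by the two hypotheses on $X$ ($\hat 0\notin X$ gives $X_{\leq\hat 0}=\emptyset$; the covering condition gives $X_{\leq y}\neq\emptyset$ for $y\neq\hat 0$). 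M\"obius inversion then yields $a(y)=\mu_L(\hat 0,y)$ for all $y$, and specializing to $y=\hat 1$ gives $\mu_L(\hat 0,\hat 1)=N_0-N_1+N_2\mp\cdots$; your convention even handles the degenerate lattice $\hat 0=\hat 1$ correctly, where $N_0=1$. In fact you prove slightly more than the quoted statement (the formula for every $y$, not just $\hat 1$), which is a perfectly fine strengthening.
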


In the following we will evaluate this formula for the case of the intersection poset $L(\mathcal{A})$ of a simple central arrangement, and split the expression into terms corresponding to sub-arrangements $\mathcal{A}_S$ with $0\leq |S|\leq n$.
We will use following definitions.

\begin{definition}
  Let $\A$ be a simple central arrangement of $m$ maxout units. For $S\subseteq[m]$ of cardinality $|S|\leq n$, let $N_k^S$ denote the number of $k$ element subsets of $\A_S$ with join $\{0\}$.
  For $k>n$, let $N_k^\ast$ denote the number of $k$ element subsets of $\A$ which contain atoms of at least $n+1$ units. %
  Note that their join is necessarily $\{0\}$ as $\A$ is simple.
\end{definition}

First we note that the terms involving more than $n$ units can be grouped as follows.
\begin{lemma}\label{lem:central-3}
  Let $\A$ be a simple central arrangement of $m$ maxout units in $\RR^{n+1}$, and let $M\coloneqq |\A|$ be the number of its atoms. Then $\sum_{k=n+1}^M (-1)^k N_k^\ast = - (-1)^n {m-1 \choose n}$.
\end{lemma}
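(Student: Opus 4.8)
The plan is to evaluate the alternating sum $\sum_{k\geq n+1}(-1)^k N_k^\ast$ directly by a bookkeeping argument over the atoms of $\A$, organized by which units contribute. Recall that $N_k^\ast$ counts the $k$-element subsets of the $M$ atoms that involve atoms from at least $n+1$ distinct units; since $\A$ is simple central, any such subset automatically has join $\{0\}$. First I would pass to the complementary count: the total number of $k$-element subsets of $\A$ is $\binom{M}{k}$, and the ones \emph{not} counted by $N_k^\ast$ are exactly those whose atoms come from at most $n$ units, i.e.\ those contained in some $\A_S$ with $|S|\leq n$. So $N_k^\ast = \binom{M}{k} - \#\{k\text{-subsets of }\A\text{ supported on }\leq n\text{ units}\}$.

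Next I would compute $\sum_{k}(-1)^k\binom{M}{k}$, which is $0$ (since $M\geq 1$; indeed $M\geq m\geq n+1$), so the full alternating sum over all $k$ from $0$ to $M$ of $(-1)^k\binom{M}{k}$ vanishes, and I can rewrite $\sum_{k=n+1}^{M}(-1)^k N_k^\ast$ as $-\sum_{k=0}^{M}(-1)^k(\text{number of }k\text{-subsets supported on }\leq n\text{ units}) + \sum_{k=0}^{n}(-1)^k(\dots)$ where the last correction handles the small $k$ range in which $N_k^\ast=0$ by definition. The cleanest route is inclusion–exclusion over the support: a $k$-subset supported on \emph{at most} $n$ units, counted with a sign to extract ``support has size exactly $j$'', gives $\sum_{j=0}^{n}\sum_{S\in\binom{[m]}{j}}(\text{number of }k\text{-subsets of }\A_S\text{ that use every unit in }S)$. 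Summing a Vandermonde/binomial identity over $k$ with the alternating sign collapses each inner generating function: for a fixed $S$ with $|S|=j$, the number of atoms of unit $i$ is $\binom{k_i}{2}$ (all pairwise indecision boundaries are genuine codimension-$1$ atoms for generic parameters), so $\sum_k (-1)^k \cdot(\#k\text{-subsets of }\A_S\text{ hitting all of }S)$ equals, by the standard identity $\sum_{a}(-1)^a\binom{t}{a}=0$ applied unit-by-unit with inclusion–exclusion to force ``at least one atom from each unit'', a quantity that is $(-1)^{\#\A_S}$ times a product that is $\pm 1$ — in fact it evaluates to $\prod_{i\in S}\big(\sum_{a\geq 1}(-1)^a\binom{\binom{k_i}{2}}{a}\big) = \prod_{i\in S}(-1) = (-1)^{j}$, using $\sum_{a\geq 0}(-1)^a\binom{N}{a}=0$ so $\sum_{a\geq 1}(-1)^a\binom{N}{a}=-1$ for each $N=\binom{k_i}{2}\geq 1$.

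Putting these together, $\sum_{k=0}^{M}(-1)^k(\#k\text{-subsets supported on }\leq n\text{ units}) = \sum_{j=0}^{n}\binom{m}{j}(-1)^{j}$, and by the binomial identity used in Lemma~\ref{lemma:reformulation} (namely $\sum_{j=0}^{n}(-1)^j\binom{m}{j}=(-1)^n\binom{m-1}{n}$, valid since $m\geq n+1$) this equals $(-1)^n\binom{m-1}{n}$. The small-$k$ correction terms ($0\leq k\leq n$) must then be shown to cancel: for $k\leq n$ every $k$-subset of $\A$ is supported on at most $k\leq n$ units, so the ``$\leq n$ units'' count for such $k$ is simply $\binom{M}{k}$, and one checks the bookkeeping closes. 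Hence $\sum_{k=n+1}^{M}(-1)^k N_k^\ast = -(-1)^n\binom{m-1}{n}$, as claimed.

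The main obstacle I anticipate is getting the inclusion–exclusion signs and ranges exactly right: one must be careful to separate the ``choose which units'' layer from the ``choose which atoms within those units, using all of them'' layer, and to verify that the low-$k$ terms (where $N_k^\ast$ is zero by fiat rather than by the identity) are accounted for correctly when one extends the alternating sum down to $k=0$. A secondary point worth stating explicitly is that for generic (simple) parameters every pair $\{a,b\}\in\binom{[k_i]}{2}$ yields a genuine codimension-$1$ atom $H^i_{ab}$, so unit $i$ contributes exactly $\binom{k_i}{2}$ atoms; this is what makes each local factor $\sum_{a\geq 1}(-1)^a\binom{\binom{k_i}{2}}{a}=-1$ legitimate (it needs $\binom{k_i}{2}\geq 1$, i.e.\ $k_i\geq 2$, which holds by our standing assumption).
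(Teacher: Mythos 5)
Your proof is correct and is essentially the paper's argument: both decompose the subsets of atoms by their exact support $S\subseteq[m]$ and use the factorization $\prod_{i\in S}\sum_{a\geq 1}(-1)^a\binom{|\A_{\{i\}}|}{a}=(-1)^{|S|}$, the only difference being that you pass to the complement and sum over supports of size $\leq n$, whereas the paper sums directly over supports of size $\geq n+1$ and applies $\sum_{r=0}^m(-1)^r\binom{m}{r}=0$ at the end rather than the beginning. One harmless inaccuracy: a rank-$k_i$ unit contributes $|\A_{\{i\}}|$ atoms, which is the number of codimension-$1$ indecision boundaries (edges of its Newton polytope) and can be strictly smaller than $\binom{k_i}{2}$ once $k_i\geq 4$; the argument only needs $|\A_{\{i\}}|\geq 1$, which you correctly identify as the crux and which holds for $k_i\geq 2$.
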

\begin{proof}%
The statement follows from reformulations obtained by splitting $k$ element subsets of $\A$ with atoms from $\A_{\{i\}}$, $i\in S$, into a disjoint union of non-empty $J_i\subseteq \A_{\{i\}}$, $i\in S$: %
\begin{align*}
&\sum_{r=n+1}^M (-1)^r N_r^\ast
=  \sum_{r=n+1}^m \sum_{S\in{[m]\choose r}}
\sum_{ \substack{ \emptyset\neq J_i\subseteq \A_{\{i\}}\\ \text{for all }i\in S}}
\!\!(-1)^{\sum_{i\in S}|J_i|}
=  \sum_{r=n+1}^m \sum_{S\in{[m]\choose r}} \prod_{i\in S} %
\sum_{j_i=1}^{|\A_{\{i\}}|}{|\A_{\{i\}}|\choose j_i} (-1)^{j_i}%
\\
&=  \sum_{r=n+1}^m \sum_{S\in{[m]\choose r}} \prod_{i\in S} \left(-1\right)
=  \sum_{r=n+1}^m  {m \choose r} (-1)^{r}
=   -\sum_{r=0}^n  { m \choose r} (-1)^{r}
= - (-1)^n {m-1 \choose n}. \qedhere
\end{align*}
\end{proof}

Now using Theorem~\ref{thm:cross-cut} and Lemma~\ref{lem:central-3}, we obtain the following description of $\mu_{L(\mathcal{A})}(\{0\})$. %

\begin{lemma}
\label{lem:central}
Let $\A$ be a simple central arrangement of $m$ maxout units in $\RR^{n+1}$. Then
$$
\mu_{L(\A)}(\{0\})
= - (-1)^n {m-1 \choose n}
+ \sum_{j=0%
}^n (-1)^{n-j} {m-1-j\choose n-j} \sum_{S\in{[m]\choose j}} \mu_{L(\A_S)}(\{0\}) \mathds{1}_{\{0\}\in L(\A_S)}.
$$
Here $\mathds{1}_{\{0\}\in L(\A_S)}$ takes value $1$ if $\{0\}\in L(\A_S)$ and $0$ otherwise.
\end{lemma}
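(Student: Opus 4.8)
The plan is to apply the Cross-cut Theorem (Theorem~\ref{thm:cross-cut}) to the finite lattice $L = L(\mathcal{A})$ with the cross-cut $X = \mathcal{A}$ (the set of atoms), whose join is $\hat 1 = \{0\}$ since $\mathcal{A}$ is central and simple with $m \geq n+1$ units. The theorem gives $\mu_{L(\mathcal{A})}(\{0\}) = \sum_{k\geq 0} (-1)^k N_k$, where $N_k$ is the number of $k$-element subsets of $\mathcal{A}$ whose join is $\{0\}$. The first step is to split this sum according to how many distinct units the $k$ atoms involve: a $k$-subset either uses atoms from at most $n$ units (so its support is some $S \subseteq [m]$ with $|S| \leq n$), or it uses atoms from at least $n+1$ units (counted by $N_k^\ast$). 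For the latter the join is automatically $\{0\}$ because $\mathcal{A}$ is simple, and Lemma~\ref{lem:central-3} evaluates $\sum_{k=n+1}^M (-1)^k N_k^\ast = -(-1)^n\binom{m-1}{n}$, producing the first term on the right-hand side.

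Next I would handle the contribution of subsets with support contained in some $S$ with $|S| \leq n$. For a fixed support $S$, a $k$-subset of $\mathcal{A}_S$ using atoms from all of $S$ (i.e.\ genuinely supported on $S$) has join $\{0\}$ if and only if, as a subset of the sub-lattice $L(\mathcal{A}_S)$, it is a cross-cut subset with join $\hat 1 = \{0\}$ — but this requires $\{0\} \in L(\mathcal{A}_S)$, which is exactly where the indicator $\mathds{1}_{\{0\}\in L(\mathcal{A}_S)}$ enters. When $\{0\} \in L(\mathcal{A}_S)$, applying the Cross-cut Theorem to $L(\mathcal{A}_S)$ with cross-cut $\mathcal{A}_S$ gives $\mu_{L(\mathcal{A}_S)}(\{0\}) = \sum_k (-1)^k \widetilde N_k^S$, where $\widetilde N_k^S$ counts $k$-subsets of $\mathcal{A}_S$ with join $\{0\}$. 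So $\sum_k (-1)^k N_k$ restricted to subsets whose support is \emph{exactly} $S$ equals the alternating sum over $k$-subsets of $\mathcal{A}_S$ that use every unit in $S$ and have join $\{0\}$; by inclusion-exclusion over which units are actually used, one can pass between "support exactly $S$" and "support contained in $S$", and the combinatorial bookkeeping is precisely what Lemma~\ref{lemma:inclusion-exclusion} packages. Concretely, each subset of $\mathcal{A}$ supported on at most $n$ units and with join $\{0\}$ is counted once in $\mu_{L(\mathcal{A})}(\{0\})$; writing $\sum_{S: \text{support} \subseteq S, |S|=j} 1 = \binom{m - r}{j - r}$ for a subset with support of size $r$, Lemma~\ref{lemma:inclusion-exclusion} lets us rewrite the raw count as $\sum_{j=0}^n (-1)^{n-j}\binom{m-1-j}{n-j}$ times the count of such subsets lying inside $\mathcal{A}_S$ for $|S|=j$, and this inner count (with signs) is exactly $\mu_{L(\mathcal{A}_S)}(\{0\})\mathds{1}_{\{0\}\in L(\mathcal{A}_S)}$.

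Assembling the two pieces — the $N_k^\ast$ part via Lemma~\ref{lem:central-3} and the bounded-support part via the Cross-cut Theorem applied to each $L(\mathcal{A}_S)$ together with Lemma~\ref{lemma:inclusion-exclusion} — yields exactly the claimed identity
\[
\mu_{L(\A)}(\{0\}) = -(-1)^n\binom{m-1}{n} + \sum_{j=0}^n (-1)^{n-j}\binom{m-1-j}{n-j}\sum_{S\in\binom{[m]}{j}} \mu_{L(\A_S)}(\{0\})\,\mathds{1}_{\{0\}\in L(\A_S)}.
\]
The main obstacle, I expect, is the careful handling of the support bookkeeping in the bounded-support part: one must be precise that a $k$-subset with support contained in $S$ but not equal to $S$ gets correctly attributed, that the Cross-cut Theorem applied to $L(\mathcal{A}_S)$ counts subsets with join $\{0\}$ in $L(\mathcal{A}_S)$ (which matches join $\{0\}$ in $L(\mathcal{A})$ since $\mathcal{A}_S \subseteq \mathcal{A}$ and $L(\mathcal{A}_S)$ is a sub-lattice), and that the indicator correctly vanishes exactly when $\{0\}$ is not a meet of atoms of units in $S$. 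Once the counting is set up so that each relevant subset of $\mathcal{A}$ is summed against the weight guaranteed by Lemma~\ref{lemma:inclusion-exclusion} (using that $\binom{m-r}{j-r}$ counts the $j$-sets containing a fixed $r$-set), the identity follows by matching coefficients, and no further computation is needed.
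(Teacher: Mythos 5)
Your proposal is correct and follows essentially the same route as the paper's proof: apply the Cross-cut Theorem to $L(\mathcal{A})$ with the atoms as cross-cut, split the alternating sum into subsets supported on more than $n$ units (handled by Lemma~\ref{lem:central-3}) and subsets with join $\{0\}$ supported on at most $n$ units (redistributed over the $\mathcal{A}_S$ via Lemma~\ref{lemma:inclusion-exclusion} and reassembled as $\mu_{L(\mathcal{A}_S)}(\{0\})\mathds{1}_{\{0\}\in L(\mathcal{A}_S)}$ by the Cross-cut Theorem applied to each $L(\mathcal{A}_S)$). The only cosmetic difference is your brief detour through ``support exactly $S$'' before settling on the ``support contained in $S$'' bookkeeping that the paper uses directly.
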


\begin{proof}%
Let $M\coloneqq |\A|$ be the number of atoms of $\A$, and for a subset $A\subseteq \A$ let $\Supp(A)\subseteq[m]$ denote the minimal $S\subseteq[m]$ with $A\subseteq \A_S$. With Theorem~\ref{thm:cross-cut} we can decompose $\mu_{L(\A)}(\{0\}) = \mu_{L(\A)}(\hat0,\hat1)$ as 
\allowdisplaybreaks
\begin{align*}%
    &\mu_{L(\A)}(\{0\})  \textoverset[O]{Thm.~\ref{thm:cross-cut}}{=} \sum_{k=0}^M (-1)^k N_k
    =\bigg[\sum_{k=n+1}^M (-1)^k N_k^\ast\bigg] + \bigg[\sum_{k=0}^M (-1)^k \sum_{\substack{A\in\binom{\mathcal A}{k}\\ \text{join}(A)=\{0\} \\ |\Supp(A)|\leq n}} 1\bigg] \\
    &\textoverset[O]{Lem.~\ref{lemma:inclusion-exclusion}}{=} \bigg[\sum_{k=n+1}^M (-1)^k N_k^\ast\bigg] + \bigg[\sum_{k=0}^M (-1)^k \!\!\!\sum_{\substack{A\in\binom{\mathcal A}{k}\\ \text{join}(A)=\{0\} \\ |\Supp(A)|\leq n}} \sum_{j=0}^n (-1)^{n-j}\binom{m-1-j}{n-j}\sum_{S\in\binom{[m]}{j}}\mathbb{1}_{\Supp(A)\subseteq S}\bigg] \\
    &\textoverset[O]{}{=} \bigg[\sum_{k=n+1}^M (-1)^k N_k^\ast\bigg] + \bigg[ \sum_{j=0}^n (-1)^{n-j}\binom{m-1-j}{n-j}\sum_{S\in\binom{[m]}{j}}\sum_{k=0}^M (-1)^k \!\!\!\sum_{\substack{A\in\binom{\mathcal A}{k}\\ \text{join}(A)=\{0\} \\ |\Supp(A)|\leq n}} \mathbb{1}_{\Supp(A)\subseteq S}\bigg] \\
    &\textoverset[O]{}{=} \bigg[\sum_{k=n+1}^M (-1)^k N_k^\ast\bigg] + \bigg[\sum_{j=0}^n (-1)^{n-j} {m-1-j\choose n-j} \sum_{S\in{[m]\choose j}}\sum_{k=0}^M (-1)^k N_k^S\bigg]\\
    &\textoverset[O]{}{=} \bigg[\sum_{k=n+1}^M (-1)^k N_k^\ast\bigg] + \bigg[\sum_{j=0}^n (-1)^{n-j} {m-1-j\choose n-j} \sum_{S\in{[m]\choose j}} \mu_{L(\A_S)}(\{0\}) \mathds{1}_{\{0\}\in L(\A_S)}\bigg]\\
    &\textoverset[O]{Lem.~\ref{lem:central-3}}{=} - (-1)^n {m-1 \choose n} + \bigg[\sum_{j=0}^n (-1)^{n-j} {m-1-j\choose n-j} \sum_{S\in{[m]\choose j}} \mu_{L(\A_S)}(\{0\}) \mathds{1}_{\{0\}\in L(\A_S)}\bigg]. \qedhere
\end{align*}
\end{proof}

We now have all the supporting results we need to prove Theorem~\ref{thm:central-arbdims}.

\begin{proof}[Proof of Theorem~\ref{thm:central-arbdims}]
Similar to the proof of Theorem~\ref{thm:facessimple}, note that if $\A$ is simple, then any element in $L(\mathcal{A})\setminus\{0\}$ can be written as an intersection of at most $n$ atoms. Moreover, any $y\in L(\mathcal{A})\setminus\{0\}$ is contained in $L(\mathcal{A}_S)$ if and only if its support is contained in $S$.
This allows us to apply Lemma~\ref{lemma:inclusion-exclusion} as in the proof of Theorem~\ref{thm:facessimple} for $y\in L(\mathcal{A})\setminus\{0\}$. %
Hence %
\allowdisplaybreaks
\begin{align*}\label{eq:central-1}
  &r(\mathcal{A}) \textoverset[O]{Thm.~\ref{thm:posetcounting}}{=} (-1)^{n+1} \psi(\{0\})\mu_{L(\mathcal{A})}(\{0\}) + (-1)^{n+1} \sum_{y\in L(\mathcal{A})\setminus\{0\}}\psi(y)\mu_{L(\mathcal{A})}(y)\nonumber \\
  &\textoverset[O]{Lem.~\ref{lemma:inclusion-exclusion}}{=} (-1)^{n+1} \mu_{L(\mathcal{A})}(\{0\}) + (-1)^{n+1} \sum_{j=0}^{n} (-1)^{n-j}{m-1-j\choose n-j} \!\!\!\sum_{S\in {[m]\choose j}}  \sum_{y\in L(\mathcal{A}_S)\setminus\{0\}}\!\!\!\!\! \psi(y)\mu_{L(\mathcal{A}_S)}(y) \\
  &\textoverset[O]{Lem.~\ref{lem:central}}{=} {m-1 \choose n} + (-1)^{n+1} \sum_{j=0}^{n} (-1)^{n-j}{m-1-j\choose n-j} \sum_{S\in {[m]\choose j}}  \Big(\mu_{L(\A_S)}(\{0\}) \mathds{1}_{\{0\}\in L(\A_S)} \\
  &\hspace{98mm} + \sum_{y\in L(\mathcal{A}_S)\setminus\{0\}} \psi(y)\mu_{L(\mathcal{A}_S)}(y) \Big) \\
  &\textoverset[O]{}{=} {m-1 \choose n} + (-1)^{n+1} \sum_{j=0}^{n} (-1)^{n-j}{m-1-j\choose n-j} \sum_{S\in {[m]\choose j}} \sum_{y\in L(\mathcal{A}_S)} \psi(y)\mu_{L(\mathcal{A}_S)}(y) \\
  &\textoverset[O]{Thm.~\ref{thm:posetcounting}}{=} {m-1 \choose n} + \sum_{j=0}^{n} (-1)^{n-j}{m-1-j\choose n-j} \sum_{S\in {[m]\choose j}} r(\A_S). \qedhere
\end{align*}
\end{proof}

Recall that units of rank $1$ have an empty arrangement and can be ignored.
Theorem~\ref{thm:central-arbdims} generalizes \eqref{eq:centralsphererecoverWeibel} and Weibel's result \cite[Theorem~1 for $k=0$]{Weibel12} by removing the requirement that
$k_1,\ldots, k_m\geq n+2$. %
We illustrate Theorem~\ref{thm:central-arbdims} in the next example.

\begin{example}
\label{ex:central-arbdims} \
\begin{enumerate}[leftmargin=*]
\item
Consider the arrangement of $m=2$ maxout units of ranks $k_1=3$, $k_2=2$ in $\mathbb{R}^d$, $d=n+1=2$ shown in the left part of Figure~\ref{fig:maxoutarrangementcentral}.
In this example $\mu(\mathbb{R}^2)=1$, $\mu(H^i_{ab})=-1$,  $\mu(\textcolor{red}{\bullet})=3$.
By Theorem~\ref{thm:posetcounting}, $r(\mathcal{A}) = \sum_{y\in L(\mathcal{A})} \psi(y)\mu_{L(\mathcal{A})}(y) = (-1)^2 1 + 0 (-1-1-1) + (-1)^1(-1) + (-1)^0 3 = 5$.
By Theorem~\ref{thm:central-arbdims}, this can be written as
$r(\mathcal{A}) = \psi(\mathbb{S}^n){m-1\choose n} + \sum_{j=1}^{n} (-1)^{n-j}{m-1-j\choose n-j} \sum_{S\in {[m]\choose j}} r(\mathcal{A}_S) = 0 + (-1)^0{0\choose 0}(3+2) = 5$.
\item
Consider now the arrangement of $m=3$ maxout units of ranks $k_1=3$, $k_2=3$, $k_3=2$ in $\mathbb{R}^d$, $d=n+1=3$, shown in the right part of Figure~\ref{fig:maxoutarrangementcentral}.
By Theorem~\ref{thm:central-arbdims}, the number of regions is $r(\A) = 2{3-1\choose 2} + (-1)^{2-1}{3-1-1\choose 2-1}(3+3+2) + (-1)^{2-2}{3-1-2\choose 2-2}(9 +6+6 ) = 15$.
\end{enumerate}
\end{example}

\begin{figure}
    \centering
\begin{tabularx}{10cm}{m{5cm}m{5cm}}
\begin{tikzpicture}[every node/.style={black,above right, inner sep=1pt}]

\path[fill=blue!10] (-1.25,-1.25) rectangle (1.25cm,1.25cm);

\draw[name path=line11, double=black, white, thick] (0,0) -- (1.25,.75) node [right] {$H^1_{12}$};
\draw[name path=line12, double=black, white, thick] (0,0) -- (1.25,-1) node [right] {$H^1_{23}$};
\draw[name path=line13, double=black, white, thick] (0,0) -- (-1.25,0) node [left] {$H^1_{13}$};

\draw[name path=line21, double=blue!80, white, thick] (0,-1.25) -- (0,1.25) node [above] {\textcolor{blue!80}{$H^2_{12}$}};

\fill[name intersections={of=line11 and line12,total=\t}, draw=white, thick] {(intersection-1) circle (1.5pt) node {}};

\foreach \i in {1}{
	\foreach \j in {1,2,3}{
       \fill[name intersections={of={line2\i} and {line1\j}, total=\t}, red!80, draw=white, thick][]
           \ifnum\t=0
           {};
           \else
        	\foreach \s in {1,...,\t}{(intersection-\s) circle (1.5pt) node {} } ;
         \fi
   	}
}
\end{tikzpicture}

&
\begin{tikzpicture}
\node at (0,0) {\includegraphics[width=4cm]{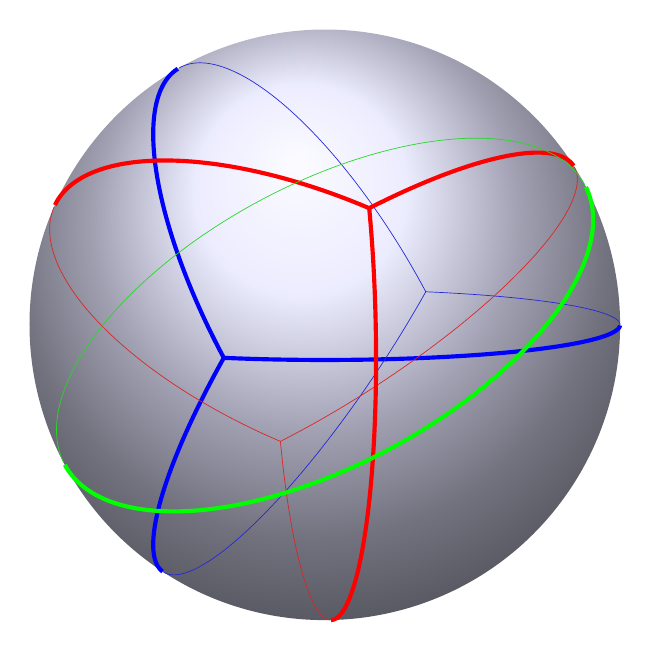}};
\node at (1.75,1.25) {\textcolor{red}{$\A_1$}};
\node at (2.125,0) {\textcolor{blue}{$\A_2$}};
\node at (-1.75,-1.25) {\textcolor{green!70!black}{$\A_3$}};
\end{tikzpicture}
\end{tabularx}
\vspace{-3mm}
\caption{%
Shown are two central maxout arrangements, one in $\mathbb{R}^2$ and one $\mathbb{R}^3$ (for clarity we show only the intersection with $\mathbb{S}^2$), discussed in Example~\ref{ex:central-arbdims}.
}
    \label{fig:maxoutarrangementcentral}
\end{figure}

\subsection{Strictly upper vertices or bounded regions}
We now have counting formulas for the number of upper vertices and the total number of vertices of a Minkowski sum of polytopes. Given a sharp upper bound on the total number of vertices, we can obtain a sharp upper bound on the number of upper vertices if we also have an appropriate lower bound on the number of strict lower vertices. In this subsection we derive such a lower bound.

The strict upper vertices of a Minkowski sum correspond
to the bounded regions that a central maxout arrangement defines on a hyperplane that does not intersect the origin, which are the regions that do not intersect the negated hyperplane.
These are also equivalent to the bounded regions of a non-central arrangement in one dimension lower.
The case of central hyperplane arrangements was studied in~\cite[Theorem~3.2]{Greene1983ONTI}, showing that, in that case, the induced arrangement over a general hyperplane has $\mu(\hat 0, \hat 1)$ relatively bounded regions.
A particular challenge in the case of maxout arrangements is that the atoms need not be symmetric. This means that the number of bounded regions it defines over a hyperplane may depend on the particular choice of the hyperplane. We obtain a lower bound that is independent of this choice.

\begin{theorem}[Lower bound on the number of bounded regions]\label{thm:lowerbound_strict_lower}
Let $\A$ be a simple central maxout arrangement in $\mathbb{R}^{n+1}$. %
Let $g=\{x\in \mathbb{R}^{n+1}\colon \langle x,w\rangle =1\}$ be an affine hyperplane that does not contain the origin.
Let $\A^g=\{g\cap H \neq \emptyset \colon H\in \A, H\not\supseteq g\}$.
Then the number of regions of $\A$ which do not intersect $g$,
i.e.\ regions consisting of points with $\langle x,w\rangle\leq 0$,
satisfies
\begin{align*}
r(\A) - r(\A^g)
\geq & {m-1\choose n}.
\end{align*}
\end{theorem}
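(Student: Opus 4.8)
The plan is to compare the region counting formula for the central arrangement $\A$ in $\RR^{n+1}$ (Theorem~\ref{thm:central-arbdims}) with the region counting formula for the induced arrangement $\A^g$ on the affine hyperplane $g\cong\RR^n$. Since $g$ does not contain the origin, the intersection $\A^g$ is a (possibly non-simple, but still proper) maxout arrangement of $m$ units in an $n$-dimensional affine space, so Theorem~\ref{thm:facessimple}'s underlying identity — or rather the more general Theorem~\ref{thm:posetcounting} combined with Lemma~\ref{lemma:inclusion-exclusion} — applies to it. The key geometric observation is that regions of $\A^g$ are in bijection with regions of $\A$ that \emph{do} intersect $g$: each region of $\A$ meeting the open halfspace $\{\langle x,w\rangle>0\}$ slices $g$ in exactly one region of $\A^g$, and conversely. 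Hence $r(\A)-r(\A^g)$ is exactly the number of regions of $\A$ contained in the closed halfspace $\{\langle x,w\rangle\le 0\}$, which is what we must bound below by $\binom{m-1}{n}$.

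First I would set up the bijection carefully. A region of the central arrangement $\A$ (viewed in $\RR^{n+1}$) is an open polyhedral cone; it either lies entirely in $\{\langle x,w\rangle\le 0\}$ or it meets $\{\langle x,w\rangle>0\}$, in which case its intersection with $g$ is a nonempty region of $\A^g$. This map is injective because distinct cones give distinct slices, and surjective because every region of $\A^g$ is the trace of the unique cone of $\A$ containing it. So $r(\A^g)$ counts precisely the regions of $\A$ that intersect $g$, giving $r(\A)-r(\A^g)=\#\{\text{regions of }\A\text{ in }\{\langle x,w\rangle\le0\}\}$.

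Next I would express both $r(\A)$ and $r(\A^g)$ via the sub-arrangement formulas and subtract. For $\A$ we have Theorem~\ref{thm:central-arbdims}:
\[
r(\A)=\binom{m-1}{n}+\sum_{j=0}^n(-1)^{n-j}\binom{m-1-j}{n-j}\sum_{S\in\binom{[m]}{j}}r(\A_S).
\]
For $\A^g$, which lives in an $n$-dimensional affine space, the analogous expansion (Theorem~\ref{thm:posetcounting} plus Lemma~\ref{lemma:inclusion-exclusion}, exactly as in the proof of Theorem~\ref{thm:facessimple}) gives
\[
r(\A^g)=\sum_{j=0}^n(-1)^{n-j}\binom{m-1-j}{n-j}\sum_{S\in\binom{[m]}{j}}r(\A^g_S),
\]
where $\A^g_S=(\A_S)^g$ is the restriction of the $S$-subarrangement to $g$. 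Subtracting,
\[
r(\A)-r(\A^g)=\binom{m-1}{n}+\sum_{j=0}^n(-1)^{n-j}\binom{m-1-j}{n-j}\sum_{S\in\binom{[m]}{j}}\bigl(r(\A_S)-r(\A^g_S)\bigr).
\]
The plan is then to show that the double sum on the right is nonnegative — in fact that it equals the number of \emph{bounded} regions of the non-central arrangement $\A^g$, or more precisely a nonnegative quantity — so that $r(\A)-r(\A^g)\ge\binom{m-1}{n}$. Applying the bijection inductively to each $\A_S$ (each sub-arrangement is again central in $\RR^{n+1}$ and $g$ does not contain its center), one recognizes $r(\A_S)-r(\A^g_S)$ as the number of regions of $\A_S$ in $\{\langle x,w\rangle\le0\}$; alternatively, the alternating sum telescopes to a count of bounded regions of $\A^g$, which is manifestly $\ge 0$. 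Either route reduces the theorem to showing that this residual term is nonnegative.

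The main obstacle is precisely controlling the sign of the alternating sum $\sum_j(-1)^{n-j}\binom{m-1-j}{n-j}\sum_{|S|=j}(r(\A_S)-r(\A^g_S))$ without a clean closed form, since the $\A_S$ are central and the bijection leaves a genuinely combinatorial remainder. I expect the cleanest resolution is to interpret $r(\A)-r(\A^g)$ directly: the regions of $\A$ not meeting $g$ are in bijection with the regions of $\A$ meeting the opposite hyperplane $-g=\{\langle x,w\rangle=-1\}$ that are \emph{bounded} in $-g$ together with the single unbounded complement, so that the count is $1+(\text{number of bounded regions of }\A^{-g})$ plus lower-order corrections — and one then invokes the lower bound on bounded regions of a generic hyperplane arrangement (an arrangement of $m$ hyperplanes in general position in $n$-space has $\binom{m-1}{n}$ bounded regions, used already in Proposition~\ref{prop:singleLayerLowerBoundNoBias}) after observing that a maxout arrangement dominates the hyperplane arrangement formed by picking one atom per unit. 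Making this domination rigorous — showing that collapsing each tropical-hypersurface fan to a single hyperplane through the origin cannot increase the number of regions meeting $g$ relative to the bounded count — is the delicate step, and I would handle it by a deformation/genericity argument: perturb $\A$ so all atoms of a unit coincide up to translation, track regions monotonically, and reduce to the classical hyperplane case where the bound $\binom{m-1}{n}$ is exact.
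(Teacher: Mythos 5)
Your first two steps match the paper exactly: the identification of $r(\A)-r(\A^g)$ with the number of regions of $\A$ not meeting $g$, and the subtraction of the two counting formulas (Theorem~\ref{thm:central-arbdims} for $\A$, Theorem~\ref{thm:facessimple} for $\A^g$) to isolate $\binom{m-1}{n}$ plus an alternating sum $\sum_{j}(-1)^{n-j}\binom{m-1-j}{n-j}\sum_{S}\bigl(r(\A_S)-r(\A^g_S)\bigr)$. But you then stall at precisely the point where the actual work lies, and the fallback you sketch does not close the gap. The paper's resolution is a clean combinatorial collapse: define the \emph{support} of a region $R$ of $\A_S$ (resp.\ $\A^g_S$) as the minimal $Q\subseteq[m]$ with $R$ a region of $\A_Q$ (resp.\ $\A_Q^g$), and let $s(\A_Q)$, $s(\A_Q^g)$ count regions of support exactly $Q$, so that $r(\A_S)=\sum_{Q\subseteq S}s(\A_Q)$ and likewise for $\A^g_S$. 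Substituting this, swapping the order of summation, and applying Lemma~\ref{lemma:inclusion-exclusion} (with $r=|Q|$, using $\sum_{S\in\binom{[m]}{j}}\mathbb{1}_{Q\subseteq S}=\binom{m-|Q|}{j-|Q|}$) makes the entire alternating sum telescope to $\sum_{|Q|\leq n}\bigl(s(\A_Q)-s(\A_Q^g)\bigr)$, and each of these differences is non-negative because every support-$Q$ region of $\A^g_Q$ is the trace on $g$ of a distinct support-$Q$ region of $\A_Q$. This is the missing idea in your write-up.

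Your proposed substitute --- deforming $\A$ so that each unit's tropical hypersurface collapses to a single hyperplane and ``tracking regions monotonically'' --- is not a viable repair as stated. The number of regions (and of bounded regions) of a maxout arrangement is \emph{not} constant under generic deformation: the paper itself points out that, unlike hyperplane arrangements, maxout arrangements with generic parameters can have different numbers of bounded regions, because the polyhedral pieces of the tropical hypersurfaces need not all meet one another. So the claim that collapsing to the hyperplane case cannot increase the count of regions meeting $g$ relative to the bounded count is exactly the kind of assertion that requires proof and can fail along a deformation path; you would in effect be assuming a monotonicity that is the substance of the theorem. Without either the support-decomposition argument or a rigorous replacement for it, the proof is incomplete.
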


\begin{proof}
By Theorem~\ref{thm:central-arbdims} and Theorem~\ref{thm:facessimple}, we have
$$
r(\A) - r(\A^g) = {m-1\choose n} + \sum_{j=0}^n(-1)^{n-j}{m-1-j\choose n-j}\sum_{S\in{[m]\choose j}}(r(\A_S) - r(\A_S^g) ).
$$
It remains to show that the second summand is non-negative.
We define the support of a region $R$ of $\A_S$ resp.\ $\A^g_S$ to be the minimal $Q\subseteq [m]$ such that $R$ is a region of $\A_Q$ resp.\ $\A_Q^g$.
Let $s(\A_Q)$ resp.\ $s(\A_Q^g)$ be the number of regions of $\A_Q$ resp.\ $\A^g_Q$ with support $Q$, so that $r(\A_S)=\sum_{Q\subseteq S} s(\A_Q)$ and $r(\A_S^{g})=\sum_{Q\subseteq S} s(\A_Q^{g})$. Then
\allowdisplaybreaks
\begin{align*}
  & \sum_{j=0}^n(-1)^{n-j}{m-1-j\choose n-j}\sum_{S\in{[m]\choose j}}(r(\A_S) - r(\A_S^g)) \\
  & \textoverset[O]{}{=} \sum_{j=0}^n(-1)^{n-j}{m-1-j\choose n-j}\sum_{S\in{[m]\choose j}} \sum_{Q\subseteq S} (s(\A_Q) - s(\A_Q^g)) \\
  & \textoverset[O]{}{=} \sum_{j=0}^n(-1)^{n-j}{m-1-j\choose n-j}\sum_{\substack{Q\subseteq [m]\\ |Q|\leq j}} \binom{m-|Q|}{j-|Q|} (s(\A_Q) - s(\A_Q^g)) \\
  & \textoverset[O]{}{=} \sum_{\substack{Q\subseteq [m]\\ |Q|\leq n}} (s(\A_Q) - s(\A_Q^g)) \sum_{j=0}^n(-1)^{n-j}{m-1-j\choose n-j} \binom{m-|Q|}{j-|Q|} \\
  & \textoverset[O]{Lem.~\ref{lemma:inclusion-exclusion}}{=} \sum_{\substack{Q\subseteq [m]\\ |Q|\leq n}} (s(\A_Q) - s(\A_Q^g)).
\end{align*}
Finally, note that each summand in the final expression is non-negative since the regions of a sub-arrangement $\A_Q$ intersecting $g$ is a subset of the regions of %
$\A_Q$, and the same holds true for regions that are not contained in a smaller arrangement $\A_{Q'}$, $Q'\subsetneq Q$.
\end{proof}

\section{Discussion and outlook}
\label{sec:discussion}\vspace{-2mm}

We presented sharp explicit upper bounds for the number of linear regions of the functions that can be represented by shallow maxout networks with and without biases.
These results can be regarded as upper bound theorems for tropical arrangements or upper bound theorems for the number of vertices and upper vertices of Minkowski sums of polytopes with given numbers of vertices.
As a direct application of our sharp bounds for shallow maxout networks, we obtained asymptotically tight bounds for deep maxout networks.
These results substantially improve previous lower and upper bounds.

We presented counting formulas for the number of faces of maxout arrangements in terms of the intersection poset. In the case of simple arrangements or Minkowski sums of polytopes in general orientations, we obtained formulas %
in terms of sub-arrangements or Minkowski subsums.
We also presented a lower bound on the number of strict lower vertices of Minkowski sums of polytopes in general orientations, which correspond to the bounded regions of maxout arrangements.
Our discussion connects the theoretical analysis of artificial neural networks, tropical geometry, and geometric combinatorics.
The results that we have presented can serve as the basis for addressing several other problems:
\begin{itemize}[leftmargin=*]
    \item
One possible extension of the results we presented here are explicit formulas for the maximum number of faces of any dimensions or also for the number of bounded faces. Explicit formulas for lower-dimensional faces are of particular importance for the combinatorial complexity of tropical varieties~\cite{Joswig_2017}, which are intersections of tropical hypersurfaces, and consequently also for the complexity of many algorithms in tropical geometry.
\item
Further refining the bounds for deep networks is also an interesting endeavor for future work. Even the case of ReLU networks is still the subject of intense investigation.
Another interesting avenue is the explicit number of faces for specific families of non-simple arrangements, for example those that one might obtain in convolutional networks or graph and simplicial networks, which have been recently studied in the ReLU case \cite{xiong2020number,bodnar2021weisfeiler}.
\item
An interesting open problem is the estimation of the expected number of faces for a given probability distribution over the parameters of shallow and deep maxout networks. The case of ReLU networks was recently studied in \cite{pmlr-v97-hanin19a,NIPS2019_8328}.
In shallow ReLU networks, any generic parameter gives rise to the maximum number of regions (the number of regions of a generic hyperplane arrangement or equivalently the number of upper vertices of a zonotope).
In contrast, for shallow maxout networks, generic choices of parameters can result in different numbers of regions.
The expected number of regions of a single maxout unit with Gaussian weights corresponds to the number of (upper) vertices of a Gaussian polytope, which has been studied in the literature \cite{HMR04}. However, for a maxout layer one would need to consider Minkowski sums of random polytopes, which to our knowledge have not yet been studied.
\item
A further aspect of interest is the development of parameter initialization strategies that would allow for faster optimization or better algorithmic biases when training maxout networks.
We have presented ways to select parameters that lead to the maximum number of regions for shallow maxout networks and to asymptotically maximal number of regions for deep maxout networks.
Other properties of the initialization that can be considered include the normalization of the activation values across layers and the distribution of linear regions over the space of inputs.
Related aspects for the case of ReLU networks have been studied in \cite{he2015delving,NEURIPS2018_d81f9c1b,Steinwart2019ASL,86441,Zhang2020Empirical}.
\end{itemize}
\vspace{-4mm}

\subsection*{Acknowledgment}
We are grateful to both Raman Sanyal and especially Karim Adiprasito for discussing their Upper Bound Theorem for Minkowski Sums with us.
Guido Mont\'ufar has been supported by the European Research Council (ERC) under the European Union’s Horizon 2020 research and innovation programme (grant n\textsuperscript{o}~757983).
Yue Ren %
has been supported by UK Research and Innovation (UKRI)
under the Future Leaders Fellowship programme (grant n\textsuperscript{o}MR/S034463/1).
\vspace{-2mm}

\bibliographystyle{plain}
\bibliography{referenzen}

\end{document}